\renewcommand{\`}{\mskip-\tinymuskip}
\renewcommand{\mathbb}{\mathds}
\newcommand{\R}{\mathbb R}
\newcommand{\T}{\mathbb T}
\newcommand{\X}{\mathcal X}
\newcommand{\x}{\bm{x}}
\newcommand{\vect}[1]{\bm{#1}}
\newcommand{\dd}{\mathop{}\!\mathrm{d}}
\DeclareMathOperator{\Law}{Law}
\DeclareMathOperator{\Expect}{\mathbb{E}}
\DeclareMathOperator{\Var}{Var}
\DeclareMathOperator{\Osc}{Osc}
\DeclareMathOperator{\sgn}{sgn}
\newcommand{\1}{\mathbb{1}}
\newcommand{\adjustbin}{\negmedspace{}}
\newcommand{\Jc}{J_\textnormal{c}}
\newcommand{\TTE}[1]{\textnormal{T}_{\`#1}}
\newtheorem{thm}{Theorem}
\newtheorem*{thm*}{Theorem}
\newtheorem{lem}[thm]{Lemma}
\newtheorem{cor}[thm]{Corollary}
\newtheorem{prop}[thm]{Proposition}
\newtheorem*{prop*}{Proposition}
\theoremstyle{definition}
\newtheorem*{assu*}{Assumption}
\theoremstyle{remark}
\newtheorem{rem}{Remark}
\newtheorem{exm}{Example}
\title{Size of chaos for Gibbs measures\\of mean field interacting diffusions}
\author[1]{Zhenjie Ren}
\author[2]{Songbo Wang}
\affil[1]{LaMME, Université Évry Paris-Saclay, Évry, France}
\affil[2]{LJAD, Université Côte d'Azur, Nice, France}
\subjclass{Primary 82B21; Secondary 60F05, 37L15}
\keywords{Gibbs measure, entropy, mean field limit, gradient flow,
concentration of measure}
\begin{document}

\maketitle

\begin{abstract}
We investigate Gibbs measures for diffusive particles
interacting through a two-body mean field energy.
By identifying a gradient structure for the
conditional law, we derive sharp bounds on the size of chaos,
providing a quantitative characterization of particle independence.
To handle interaction forces that are unbounded at infinity,
we study the concentration of measure phenomenon for Gibbs measures
via a defective Talagrand inequality, which may hold independent interest.
Our approach provides a unified framework
for both the flat semi-convex and displacement convex cases.
Additionally, we establish sharp chaos bounds
for the quartic Curie--Weiss model in the sub-critical regime,
demonstrating the generality of this method.
\end{abstract}

\section{Introduction}

Let $d$ be an integer $\geqslant 1$
and let $\X$ be the $d$-dimensional torus $\T^d$
or the Euclidean space $\R^d$.
Let $V \colon \X \to \R$ and $W \colon \X \times \X \to \R$ be $\mathcal C^1$
and suppose that $W$ is symmetric.
We call $V$, $W$
the \emph{confinement} and \emph{interaction} potentials respectively.
The central object of this paper is the \emph{$N$-particle
Gibbs measure} $m^N_*$, defined by
\[
m^N_*(\dd x^1\cdots\dd x^N)
= \frac{\exp \bigl( - \sum_{i \in [N]} V(x^i)
- \frac 1{2N} \sum_{i,j \in [N]} W(x^i,x^j) \bigr)
\dd x^1\cdots\dd x^N}
{\int_{\X^N}\exp \bigl( - \sum_{i \in [N]} V(y^i)
- \frac 1{2N} \sum_{i,j \in [N]} W(y^i,y^j) \bigr)
\dd y^1\cdots\dd y^N},
\]
where the $1/N$ factor represents the \emph{mean field} scaling
for interactions between the $N$ particles.
In the case $\X = \R^d$, we assume in addition that
the term in the exponent is upper bounded
and that the measure $m^N_*$ is well defined for $N \geqslant 1$.
We also assume throughout the paper that
there exists a unique \emph{mean field Gibbs measure} $m_* \in \mathcal P(\X)$
solving the fixed point equation
\begin{equation}
\label{eq:m*-fixed-point}
m_* = \Pi [m_*],
\end{equation}
where the mapping $\Pi$ between probability measures is defined by
\begin{equation}
\label{eq:def-Pi}
\Pi[m](\dd x) =
\frac{\exp\bigl( -V(x) - \langle W(x,\cdot), m\rangle\bigr)\dd x}
{\int_{\X} \exp\bigl( -V(y) - \langle W(y,\cdot), m\rangle \dd y}.
\end{equation}
Here $\langle \cdot, \cdot\rangle$ denotes the integral of a function
with respect to a measure.
For $\X = \R^d$, we suppose additionally that the measure $\Pi[m]$
is well defined for all $m \in \mathcal P_2(\R^d)$
and $m_* \in \mathcal P_2(\R^d)$.

By direct computation,
the Gibbs measure $m^N_*$ minimizes
the \emph{$N$-particle free energy functional}
$\mathcal F^N \colon \mathcal P_2(\mathcal X^N) \to (-\infty,+\infty]$,
defined by
\[
\mathcal F^N(\nu^N)
\coloneqq N \Expect_{\bm{X} \sim \nu^N} \biggl[
\langle V, \mu_{\bm{X}}\rangle + \frac 12 \langle W,
\mu_{\bm{X}}^{\otimes 2}\rangle\biggr] + H(\nu^N).
\]
Here $\mu_{\bm{X}}$ denotes the $N$-particle empirical measure for
the $N$-particle configuration $\bm{X} \coloneqq (X^1,\ldots,X^N)$:
\[
\mu_{\bm{X}} \coloneqq \frac 1N \sum_{i \in [N]} \delta_{X^i}
\]
and $H(\nu^N)$ represents the differential entropy, given by
\[
H(\nu^N) \coloneqq \int_{\X^N} \nu^N(\vect x)
\log \nu^N(\vect x) \dd \vect x,
\]
where $\nu^N(\cdot)$ denotes the density function of the measure $\nu^N$.
Similarly, the one-particle measure $m_*$ is expected to minimize
the \emph{mean field energy functional}
$\mathcal F \colon \mathcal P_2(\X) \to (-\infty, +\infty]$, defined as
\[
\mathcal F(m) = \langle V, m\rangle + \frac 12 \langle W, m^{\otimes 2}\rangle
+ H(m).
\]
Indeed, the fixed point equation \eqref{eq:m*-fixed-point}
is precisely the first-order condition for minimizing the free energy
$\mathcal F$.

Through formal computations, it can be verified that
$m^N_*$ is the invariant measure
of the $N$-particle \emph{interacting diffusion} process
\[
\dd X^i_t = - \nabla V(X^i_t) \dd t
- \frac 1N \sum_{j \in [N]} \nabla_1 W(X^i_t, X^j_t) \dd t
+ \sqrt 2 \dd B^i_t,\qquad i \in [N].
\]
The mean field Gibbs measure $m_*$ can also be shown to be invariant
under the \emph{non-linear diffusion} process
\begin{equation}
\label{eq:mfl}
\dd X_t = - \nabla V(X_t) \dd t
- \langle \nabla_1 W(X_t, \cdot), m_t\rangle \dd t
+ \sqrt 2 \dd B_t, \qquad m_t = \Law(X_t).
\end{equation}
In these equations, $B^i_t$ and $B_t$ are independent
$d$-dimensional Brownian motions.
For more details on this classical link between the Gibbs measure
and the dynamical process, we refer readers to our previous work \cite{ulpoc}.

The main purpose of this paper is to investigate
the \emph{quantitative chaotic behavior} of the sequence $m^N_*$ towards $m_*$.
To establish our framework, let $k \geqslant 1$ and
for all $N \geqslant k$, we define the marginal distributions
\[
m^{N,k}_*(\cdot)
= \int_{\X^{N-k}} m^N_*(\cdot,\dd x^{k+1}\cdots\dd x^N).
\]
We say the sequence $(m^N_*)_{N \geqslant 1}$ is
\emph{$m_*$-chaotic} if for all $k \geqslant 1$,
the weak convergence
\[
m^{N,k}_* \rightharpoonup m_*^{\otimes k}
\]
holds when $N \to \infty$.
This classical notion of chaos, introduced by \textcite{KacFoundations},
is fundamental in the study of large particle systems.
Demonstrating its propagation over time
is a key objective of kinetic theory,
as it provides a physical basis
for partial differential equations describing the system's evolution.
Notable contributions to qualitative chaos include
Sznitman's martingale approach \cite{SznitmanBoltzmannHomogenes}
to the Boltzmann--Kac particle system
and the analysis by \textcite{FHMVortex}
on the 2D viscous vortex model.
See \cite[Section~4.2]{ChaintronDiezPOC1}
for a detailed review of these results.

Our aim is to go beyond this qualitative convergence
and instead \emph{quantify} the disparity between
the two probability measures $m^{N,k}_*$ and $m_*^{\otimes k}$
for arbitrary $k$ and $N$.
This quantitative problem is referred to as the \emph{size of chaos}
in the recent literatures \cite{PPSChaos,DuerinckxChaos}.
Following the approach of \textcite{LackerQuantitative},
we consider the \emph{local} relative entropy:
\[
H(m^{N,k}_* | m_*^{\otimes k})
\coloneqq \int_{\X^k}
\log \frac{m^{N,k}_*(\x^{[k]})}{m_*^{\otimes k}(\x^{[k]})}
m^{N,k}_* (\dd\x^{[k]}),
\]
where the measures are identified with their
densities and $\x^{[k]}$ denotes the $k$-tuple
$(x^1,\ldots,x^k)$.
A key advantage of relative entropy is its linear scaling
with the number of particles.
More specifically, it satisfies the subadditivity property:
\[
H(m^{N,k}_* | m_*^{\otimes k})
\leqslant \frac{1}{\lfloor N/k\rfloor}
H(m^{N}_* | m_*^{\otimes N}),
\]
$\lfloor\cdot\rfloor$ denoting the integer part.
In scenarios involving weak interaction potential $W$,
the \emph{global} entropy typically exhibits the asymptotic behavior
\[
H(m^{N}_* | m_*^{\otimes N}) = O(1)
\]
when $N\to\infty$.
Consequently, by subadditivity,
\[
H(m^{N,k}_* | m_*^{\otimes k}) = O\biggl( \frac kN \biggr).
\]
This result not only provides a quantitative bound on the size of chaos but also
establishes $m_*$-chaoticity of the sequence $m^N_*$, as $k/N \to 0$ when
$N \to \infty$ for fixed $k$.
However, this global-to-local strategy breaks down for stronger metrics
such as $L^2$ or $L^p$ norms, which do not exhibit the linear scaling property.
In those cases, a purely local approach based on the BBGKY hierarchy
is necessary to establish chaoticity,
as exemplified in the recent works of Bresch--Jabin--Soler \cite{BJSNewApproach}
and Hess-Childs--Rowan \cite{HCRHigher}.
Regarding hierarchy-based methods for establishing chaoticity,
we also highlight the recent contributions of Bresch--Duerinckx--Jabin
\cite{BDJDuality} and Jabin--Poyato--Soler \cite{JPSMFLNonExchangeable}.

While the global-local transition in the entropy method
offers conceptual and technical convenience,
the subadditivity fails to yield optimal bounds for the size of chaos,
even in elementary Gaussian scenarios.
Lacker has demonstrated the limitation both for
Gibbs measures \cite[Example~2.3]{LackerQuantitative}
and for stochastic processes \cite[Section~3]{LackerHier}.
Specifically,
the \emph{sharp chaos bound} for Gaussian variables is instead the following:
\[
H(m^{N,k}_* | m_*^{\otimes k}) = O\biggl( \frac {k^2}{N^2} \biggr),
\]
indicating that the passage to local entropy incurs a loss of exponents
when $k/N = o(1)$.
In \cite{LackerQuantitative,LackerHier}, Lacker develops techniques based on
the BBGKY hierarchy and log-Sobolev inequalities
to establish this sharp chaos bound for a wide class
of regular potentials $V$ and $W$,
demonstrating the genericity of the $O(k^2\!/N^2)$ bound.

Several subsequent works have extended
these results in the dynamical setting.
First, \textcite{LLFSharp} show that,
under a weak and regular interaction potential,
the size of chaos remains uniformly bounded in time.
\textcite{HanEnt} extends these results to divergence-free
and $L^p$ drift kernels for $p > d$,
albeit with strong restrictions on the initial data.
One of the authors \cite{slpoc} removes these restrictions,
proving sharp propagation of chaos for the broader $W^{-1,\infty}$ class
in high viscosity
and achieving time-uniform estimates in the divergence-free case.
\textcite{HCRHigher}, focusing on bounded drift kernels,
construct an impressive power series expansion for the $N$-particle evolution
in terms of $1/N$,
showing that the size of the $n$th perturbative order
is $O(k^{2n}\!/N^{2n})$.
More recently, \textcite{GGPSharp}
extend Lacker's approach to scenarios with variable diffusion coefficients.
Additionally, \textcite{LYZNonExchangeable}
explore particle systems interacting via a graph and uncover connections to
first-passage percolation,
further expanding the scope and applicability of sharp chaos bounds.

While the aforementioned efforts address the propagation of sharp chaos,
this paper is devoted to extending sharp chaos bounds for Gibbs measures
to settings not covered by Lacker's original work~\cite{LackerQuantitative}.
A key simplification, in constrast to the dynamical case,
arises from the explicit form of the densities of
Gibbs measures, which enables a more precise analysis of their chaotic
structure. Our approach does not appear to be directly applicable to
the dynamical setting, which is naturally more challenging. For further
discussion regarding the dynamical case, we refer readers to the end of the main
results section.

In \cite{LackerQuantitative}, Lacker performs a direct comparison between
the $k$-particle marginal $m^{N,k}_*$ and the tensorized measure
$m_*^{\otimes k}$.
This approach constructs a hierarchy of marginal Fisher informations:
\[
I(m^{N,k}_*|m_*^{\otimes k})
= \int_{\X^k} \biggl| \nabla \log \frac{m^{N,k}_*(\x^{[k]})}
{m_*^{\otimes k}(\x^{[k]})} \biggr|^2 m^{N,k}_*(\dd\x^{[k]}),
\qquad k \in [N],
\]
where controlling growth across levels requires weak interactions,
specifically the smallness of $W$.
To overcome this limitation, we propose a novel methodology based on a hierarchy
of conditional entropies, which we outline below.

Our approach centers on analyzing the hierarchy of entropies
associated with the conditional measure
\[
m^{N,k|[k-1]}_{*,\x^{[k-1]}}(x^k)
\coloneqq \frac{m^{N,k}_*(\x^{[k]})}{m_*^{N,k-1}(\x^{[k-1]})},
\]
which represents the law of the $k$th particle
conditioned on the configurations of the first $k-1$ particles.
The average conditional entropy of this measure
can be expressed using the chain rule as:
\[
\int_{\X^{k-1}}
H(m^{N,k|[k-1]}_{*,\x^{[k-1]}}|m_*) m^{N,k-1}_*(\dd\x^{[k-1]})
= H(m^{N,k}_*|m_*^{\otimes k})
- H(m^{N,k-1}_*|m_*^{\otimes (k-1)}).
\]
Thus, we consider a higher-order hierarchy than that employed
in Lacker's approach, which focuses on entropies
or Fisher informations associated with marginal distributions.
At equilibrium, this conditional law is characterized by two forces:
the interaction among the first $k$ particles,
encoded by the empirical measure
\[
\mu_{\x^{[k]}} \coloneqq \frac{1}{k} \sum_{i \in [k]} \delta_{x^i},
\]
and the interaction with the remaining $N-k$ particles,
encoded in the $(k+1)$th-level conditional measure
\[
m^{N,k+1|[k]}_{*,\x^{[k]}}.
\]
We hypothesize that these two conditional measures of neighboring levels
are closely aligned.
A direct calculation confirms that their average relative entropy corresponds
to the second-order entropy difference:
\begin{multline*}
\int_{\X^k}
H(m^{N,k+1|[k]}_{*,\x^{[k]}} | m^{N,k|[k-1]}_{*,\x^{[k-1]}})
m^{N,k}_*(\dd\x^{[k]}) \\
= H(m^{N,k+1}_*|m_*^{\otimes (k+1)})
- 2H(m^{N,k}_*|m_*^{\otimes k})
+ H(m^{N,k-1}_*|m_*^{\otimes (k-1)}).
\end{multline*}
This higher-order error term allows
for a more manageable hierarchical structure,
whose qualitative behavior is unaffected by the magnitude of $W$.
Furthermore, the comparison of the conditional laws
$m^{N,k+1|[k]}_{*,\vect x^{[k]}}$
and $m^{N,k|[k-1]}_{*,\vect x^{[k-1]}}$
introduces the non-linear Fisher information
\[
I(m^{N,k|[k-1]}_{*,\x^{[k-1]}} | \Pi[m^{N,k|[k-1]}_{*,\x^{[k-1]}}])
\]
as a dissipation term, which is directly connected
to the free energy landscape of the non-linear diffusion process \eqref{eq:mfl}.

This observation offers a non-perturbative approach to establishing sharp chaos
bounds that relies solely on the dissipation structure of the free energy,
rather than on the smallness of the interaction energy. Consequently, it enables
the proof of sharp chaotic estimates in many scenarios that Lacker's method
\cite[Theorem~2.2]{LackerQuantitative} fails to cover.
Notable cases include the flat convex framework studied by
\textcite{NWSConvexMFL}, as well as \textcite{ChizatMFL}.
Interesting examples within flat convexity encompass
mean field two-layer neural networks
(see the two above-cited works and also \cite{HRSS})
and repulsive Coulomb gases.
However, for the latter, our method is limited to dimension $1$ due to
its inability to handle singular interactions.
Additionally, we examine the classical displacement convexity case studied by
\textcite{CMVKinetic}, where our method overcomes the
sub-optimal regime observed in Lacker's reversed entropy approach
\cite[Theorem~2.8]{LackerQuantitative}.
To further illustrate the broad reach of our framework,
we study the celebrated quartic Curie--Weiss model and
establish sharp chaos bounds up to the critical point.

One technical challenge arises when comparing conditional laws at neighboring
levels, as it is necessary to verify a transport-cost inequality for these
distributions. For Lipschitz interaction forces, which appear in many of the
examples above, this verification reduces to establishing Gaussian concentration
for the relevant conditional distributions. We resolve this issue by proving a
dimension-free \emph{defective $\TTE2$ inequality} for the $N$-particle Gibbs
measure, under a coercivity condition on the free energy functional, which we
refer to as a non-linear $\TTE2$ inequality in the following.
The proof relies on a conditional decomposition of the associated
\emph{modulated free energy}, originally introduced by \textcite{BJWAttractive}
for diffusive particles with singular interaction,
and, as a by product, provides a large deviation estimate
of the Jabin--Z.~Wang type.
These results extend the classical tensorization property
of the $\TTE2$ inequality and may also be of independent interest.

As a final remark, our approach is solely based
on the mean field dissipation structure
and does not require detailed analysis of the particle system.
This method shares similarities with the work
of \textcite{DelarueTseUniformPOC},
where time-uniform propagation of chaos is established
by leveraging exclusively the stability of the mean field measure flow.
Unlike existing methods, our approach does not depend on uniform
log-Sobolev inequalities developed for particle systems,
such as Malrieu's results in the displacement convex case \cite{MalrieuLSI}.
For the flat convex case, we avoid relying on findings
from our previous works \cite{ulpoc,nulsi}
and their extensions by \textcite{MonmarcheULSIBeyond},
as well as the work of \textcite{CNZULSI}.
Similarly, for the Curie--Weiss model,
our method is independent of Bauerschmidt, Bodineau and Dagallier's
parallel findings on log-Sobolev inequalities~\cite{BBDCriterionFreeEnergy},
though their study provides valuable insights into the critical exponent.

\bigskip

The next section contains the main results of this paper
and the following sections are dedicated to proving these results
in the order they appear.

\section{Main results}

We now present the main results.
The first theorem establishes the framework for deriving sharp chaos bounds.

\begin{thm}
\label{thm:sharp}
Suppose that the following three conditions hold true.
\begin{itemize}
\item
(Log-Sobolev inequalities)
There exists $\rho > 0$ such that the non-linear and linear
log-Sobolev inequalities hold:
\[
\forall m \in \mathcal P(\X),
\qquad
2\rho H(m|m_*)
\leqslant
\begin{cases}
I(m|\Pi[m]),~\text{and} \\
I(m|m_*).
\end{cases}
\]
\item
(Transport inequalities)
There exists $\gamma \geqslant 0$ such that
for both $\mu = m^{N,k|[k-1]}_{*,\x^{[k-1]}}$ and $\mu = m_*$,
the probability measure $\mu$ satisfies the transport inequality
\[
\forall \nu \in \mathcal P(\X),\qquad
\lvert \langle \nabla_1 W(x^k, \cdot),
\nu - \mu \rangle\rvert^2
\leqslant \gamma
H(\nu | \mu )
\]
uniformly in $k \in [N]$, $\x^{[k]} \in \X^{k}$.
\item
(Square integrability)
There exists $M \geqslant 0$ such that
\begin{align*}
\int_{\X^2} \lvert \nabla_1 W(x,y)
- \langle \nabla_1 W(x,\cdot), m_*\rangle\rvert^2
m^{N,2}_*(\dd x\dd y) &\leqslant M, \\
\int_{\X} \lvert \nabla_1 W(x,x)
- \langle \nabla_1 W(x,\cdot), m_*\rangle\rvert^2
m^{N,1}_*(\dd x) &\leqslant M.
\end{align*}
\end{itemize}
Then for all $k \in [N]$,
\[
\int_{\X^{k-1}} H(m^{N,k|[k-1]}_{*,\x^{[k-1]}} | m_*)
m^{N,k-1}_*(\dd\x^{[k-1]})
\leqslant
\frac{36\bigl(1+\frac\gamma\rho\bigr)^3M}{\rho N^2}
\biggl( k + \frac{3\gamma}{\rho}\biggr),
\]
and consequently,
\[
H(m^{N,k}_* | m_*^{\otimes k}) \leqslant
\frac{18\bigl(1+\frac\gamma\rho\bigr)^3M}{\rho N^2}
\biggl( k^2+ \Bigl(1+\frac{6\gamma}{\rho}\Bigr)k\biggr).
\]
\end{thm}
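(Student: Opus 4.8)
The plan is to run everything through the hierarchy of conditional relative entropies. Write $H_k \coloneqq H(m^{N,k}_* \mid m_*^{\otimes k})$ with $H_0 \coloneqq 0$. By the chain rule, $E_k \coloneqq H_k - H_{k-1} = \int_{\X^{k-1}} H(m^{N,k|[k-1]}_{*,\x^{[k-1]}} \mid m_*)\, m^{N,k-1}_*(\dd\x^{[k-1]})$ is exactly the quantity in the first claimed inequality, and, again by the chain rule, the second difference $D_k \coloneqq E_{k+1} - E_k = \int_{\X^k} H(m^{N,k+1|[k]}_{*,\x^{[k]}} \mid m^{N,k|[k-1]}_{*,\x^{[k-1]}})\, m^{N,k}_*(\dd\x^{[k]})$ is non-negative. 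Hence it suffices to prove the bound on $E_k$; the bound on $H_k = \sum_{j=1}^k E_j$ then follows by summing the arithmetic progression (this is exactly what turns $36$ and $3\gamma/\rho$ into $18$ and $1 + 6\gamma/\rho$).

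The key structural input is the explicit gradient of the conditional law: integrating out $x^{k+1}, \dots, x^N$ and using exchangeability of $m^N_*$ in those variables,
\[
-\nabla\log m^{N,k|[k-1]}_{*,\x^{[k-1]}}(x^k) = \nabla V(x^k) + \Bigl\langle \nabla_1 W(x^k, \cdot),\, \tfrac kN \mu_{\x^{[k]}} + \tfrac{N-k}N\, m^{N,k+1|[k]}_{*,\x^{[k]}} \Bigr\rangle,
\]
so that $\nabla\log\bigl(m^{N,k|[k-1]}_* / \Pi[m^{N,k|[k-1]}_*]\bigr)(x^k) = -\langle\nabla_1 W(x^k,\cdot),\, \tfrac kN(\mu_{\x^{[k]}} - m^{N,k|[k-1]}_*) + \tfrac{N-k}N(m^{N,k+1|[k]}_* - m^{N,k|[k-1]}_*)\rangle$. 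Applying the non-linear log-Sobolev inequality to $\mu = m^{N,k|[k-1]}_{*,\x^{[k-1]}}$, averaging over $\x^{[k-1]} \sim m^{N,k-1}_*$, and expanding the square via $(a+b)^2 \leqslant 2a^2 + 2b^2$, the term carrying $m^{N,k+1|[k]}_* - m^{N,k|[k-1]}_*$ is controlled by the transport inequality (with $\mu = m^{N,k|[k-1]}_*$) and the second-order chain-rule identity above, contributing at most $\gamma D_k$; the remaining term is the mean-field fluctuation $\mathcal E_k \coloneqq \Expect_{m^{N,k}_*}\bigl[\,\bigl|\tfrac1N\sum_{j=1}^k \bigl(\nabla_1 W(x^k,x^j) - \langle\nabla_1 W(x^k,\cdot),\, m^{N,k|[k-1]}_*\rangle\bigr)\bigr|^2\,\bigr]$. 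This gives $(\rho+\gamma) E_k \leqslant \mathcal E_k + \gamma E_{k+1}$ for $k < N$ and $2\rho E_N \leqslant \mathcal E_N$ (the level $k+1$ being unavailable), i.e.\ the backward recursion $E_k \leqslant \tfrac\gamma{\rho+\gamma}\, E_{k+1} + \tfrac1{\rho+\gamma}\,\mathcal E_k$ with contraction factor $\theta \coloneqq \gamma/(\rho+\gamma) < 1$ irrespective of the size of $\gamma$; iterating, $E_k \leqslant \tfrac1\rho\sum_{l=k}^{N} \theta^{l-k}\,\mathcal E_l$.

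The heart of the matter is the estimate of $\mathcal E_k$. The empirical measure $\mu_{\x^{[k]}}$ is discrete, so the transport inequality cannot be applied to it directly; instead one peels it off one particle at a time by a martingale/telescoping decomposition along the filtration that reveals the particles successively. Each innovation splits into a genuine centered increment of the form $\nabla_1 W(x^k,x^j) - \Expect[\nabla_1 W(x^k,x^j) \mid \text{the already-revealed particles}]$, whose second moment is at most $M$ once one passes to the one- and two-particle marginals $m^{N,1}_*,\,m^{N,2}_*$, plus a term comparing conditional laws at two neighbouring levels, which the transport inequality turns into a $\gamma$-multiple of a second difference $D_j$, together with low-level bias terms controlled by $E_1$ and $E_2$. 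Keeping track of the (self-consistent) diagonal contribution, one arrives at a bound of the shape $\mathcal E_k \leqslant c_1\, M k / N^2 + (\text{a $\gamma$-weighted combination of the $E_j$'s and $D_j$'s with $N^{-2}$ prefactors and polynomial-in-$k$ weights})$. Substituting the ansatz $E_k \leqslant \tfrac{36(1+\gamma/\rho)^3 M}{\rho N^2}\bigl(k + \tfrac{3\gamma}\rho\bigr)$ into this bound and into the iterated recursion $E_k \leqslant \tfrac1\rho\sum_{l\geqslant k}\theta^{l-k}\mathcal E_l$, and using $1-\theta = \rho/(\rho+\gamma)$ to evaluate the geometric sums (which is where the powers of $1+\gamma/\rho$ come from), one checks that the ansatz reproduces itself: the only genuinely new contribution is the arithmetic $O(Mk/N^2)$ term, whereas each self-referential $E_j$- or $D_j$-term carries an extra factor $k/N \leqslant 1$ and is therefore subdominant and absorbed into the numerical constant. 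This pins down the constants $36$ and $3\gamma/\rho$ and closes the induction; the final bound on $H_k$ is then $\sum_{j=1}^k E_j = \tfrac{C}{2N^2}\bigl(k^2 + (1+2c)k\bigr)$ with $C = 36(1+\gamma/\rho)^3 M/\rho$, $c = 3\gamma/\rho$, which is the stated estimate.

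The step I expect to be the main obstacle is the control of $\mathcal E_k$. Beyond the combinatorial bookkeeping, the delicate point is that the exchange of the empirical measure for conditional laws must be routed through the second differences $D_k = E_{k+1} - E_k$ rather than through a direct comparison with $m_*$: the latter would produce a term $\gamma E_k$ with an $O(1)$ coefficient, leaving an inequality of the form $(\rho - \gamma) E_k \leqslant \cdots$ and hence forcing $\gamma < \rho$, whereas the theorem allows arbitrary $\gamma \geqslant 0$. It is precisely the detour through $D_k$ and the resulting contraction $\gamma/(\rho+\gamma) < 1$, valid for every $\gamma$, that rescues the argument, at the price of the more intricate polynomially-weighted estimates hidden inside $\mathcal E_k$.
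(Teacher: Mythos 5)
Your overall scheme — compare each conditional law to its immediate neighbour, apply the non-linear log-Sobolev inequality, and solve the resulting backward recursion in the level index — is exactly the paper's approach, and your identity for $\nabla_k\log(m^{N,k|[k-1]}_*/\Pi[m^{N,k|[k-1]}_*])$ is a valid rearrangement of the one used there (the paper inserts $m_*$ as a pivot and splits three ways; you split two ways with $m^{N,k|[k-1]}_*$ as the pivot). The identification of the second difference $D_k$, the contraction factor $\gamma/(\rho+\gamma)$, and the boundary equation at $k=N$ are all structurally sound. However, there are two concrete problems.

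First, the estimate of $\mathcal E_k$ — which you yourself flag as the crux — is not actually delivered by the sketch. The martingale/telescoping route leads to conditional laws of the form $\Law(X^j\mid X^1,\dots,X^{j-1},X^k)$, which by exchangeability are copies of $m^{N,j+1|[j]}_{*,(x^1,\dots,x^{j-1},x^k)}$; comparing them to $m_*$ via the transport inequality produces terms proportional to $E_{j+1}$, not second differences $D_j$, and certainly not ``low-level bias terms controlled by $E_1$ and $E_2$.'' The paper instead expands the square $\lvert\sum_{i\leqslant k}\nabla_1 W_*(x^1,x^i)\rvert^2$ directly: the diagonal and near-diagonal pieces give $(3k-2)M$, and the genuine cross term is bounded by conditioning on $(x^1,x^2)$, Cauchy--Schwarz, and a single application of the transport inequality, yielding $(k-1)(k-2)\sqrt{\gamma M H_3}$. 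Your martingale decomposition could plausibly be made to work, but the stated form of the remainder is wrong, and the bookkeeping needed to ``pin down $36$ and $3\gamma/\rho$'' is nowhere to be found.

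Second, and more seriously, the single-pass ``ansatz reproduces itself'' argument is circular. The refined bound on the fluctuation — and on the boundary $E_N$ — unavoidably involves $E_3$ (or your $E_j$'s) through the $\sqrt{\gamma M E_3}$ cross-correlation term, so the downward induction from $k=N$ already requires $E_3\leqslant\bar E_3$ at its very first step. The paper resolves this with a genuine two-pass argument: a \emph{crude} pass using only $T_1\leqslant 3Mk^2/N^2$ and the boundary bound $2\rho E_N\leqslant M$ yields $E_k\leqslant C(1+\gamma/\rho)k^2/(\rho N^2)$, which gives a concrete bound on $E_3$; this is then injected into the \emph{fine} bound on $T_1$ and on $E_N$, and the linear recursion is solved a second time to get the $O(k/N^2)$ estimate. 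Without this crude first pass (or an equivalent a priori control plus a quadratic closing of the self-referential inequality in $E_3$), your self-consistency step is a tautology.
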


The lower bound on $I(m|\Pi[m])$ in the first assumption
of Theorem~\ref{thm:sharp}
is directly related to the mean field free energy landscape.
In particular, it implies uniqueness of the invariant
measure.
We show below that this bound can be verified for models exhibiting flat
or displacement convexity across the full parameter regime, since such
convexities rule out phase transitions. For models with a second-order
phase transition, heuristic arguments indicate that this bound
remains valid throughout the entire subcritical regime;
we establish this rigorously for a quartic Curie--Weiss model in the following.

When the force kernel $\nabla_1W$ is bounded,
the second assumption follows from Pinsker's inequality,
while the third holds trivially.
However, when $\nabla_1W$ is merely Lipschitz,
verifying the transport inequality for $\mu = m^{N,k|[k-1]}_{*,\x^{[k-1]}}$
becomes unobvious, especially due to the effects of conditioning.
It is worth noting that the transport inequality for the conditional measure
is also needed in Lacker's reversed entropy result
\cite[Theorem~2.8]{LackerQuantitative},
though there it is only shown to hold in strongly log-concave cases.
We address this difficulty
using a \emph{$\TTE1$ tightening} argument
and a \emph{defective $\TTE2$ inequality},
both of which we believe have broader applications.

We begin by formulating the $\TTE1$ tightening.

\begin{thm}
\label{thm:t1-tightening}
Let $\mu \in \mathcal P_1(\X)$
and let $\rho > 0$, $\delta \geqslant 0$.
Suppose that for all $\nu \in \mathcal P_1(\X)$,
\begin{equation}
\label{eq:defective-t1}
W_1^2(\nu, \mu) \leqslant \frac 2\rho
\bigl(H(\nu | \mu) + \delta\bigr).
\end{equation}
Then for all $\nu \in \mathcal P_1(\X)$,
\begin{equation}
\label{eq:t1}
W_1^2(\nu, \mu) \leqslant \frac 8\rho (2+\delta)^2 H(\nu|\mu).
\end{equation}
\end{thm}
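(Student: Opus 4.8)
The plan is to dualise the transport inequality into a bound on the log-Laplace transform of a $1$-Lipschitz function, and then treat separately a large- and a small-parameter regime.

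Fix a $1$-Lipschitz $f\colon\X\to\R$ with $\langle f,\mu\rangle=0$; since $W_1(\nu,\mu)=\sup_f\langle f,\nu-\mu\rangle$ over such $f$, it is enough to show $\langle f,\nu\rangle\leqslant 2(2+\delta)\sqrt{2H(\nu|\mu)/\rho}$ for every $\nu\in\mathcal P_1(\X)$. Testing \eqref{eq:defective-t1} against $\mu(\cdot\mid|f|>t)$ — whose relative entropy is $\log\bigl(1/\mu(|f|>t)\bigr)$ and for which $\langle f,\cdot\rangle\geqslant t$ — yields the sub-Gaussian tail $\mu(|f|>t)\leqslant 2e^{\delta-\rho t^2/2}$; hence the log-Laplace transform $\phi(s)\coloneqq\log\langle e^{sf},\mu\rangle$ is finite, smooth and convex, with $\phi(0)=\phi'(0)=0$, $\phi'(s)=\langle f,\mu_s\rangle$ and $\phi''(s)=\Var_{\mu_s}(f)$, where $\mu_s$ is the tilted measure with $\dd\mu_s/\dd\mu=e^{sf}/\langle e^{sf},\mu\rangle$. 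The entropy–tilting identity $H(\nu|\mu)=H(\nu|\mu_s)+s\langle f,\nu\rangle-\phi(s)$ together with $H(\nu|\mu_s)\geqslant 0$ gives $\langle f,\nu\rangle\leqslant\inf_{s>0}\bigl(H(\nu|\mu)+\phi(s)\bigr)/s$, so, after the elementary optimisation in $s$, it suffices to prove
\[
\phi(s)\ \leqslant\ \frac{2(2+\delta)^2}{\rho}\,s^2\qquad\text{for every }s\geqslant 0 .
\]

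For large $s$ I would apply \eqref{eq:defective-t1} to $\nu=\mu_s$ itself: from $H(\mu_s|\mu)=s\phi'(s)-\phi(s)$ and $W_1(\mu_s,\mu)\geqslant\langle f,\mu_s-\mu\rangle=\phi'(s)$ one obtains the differential inequality $\phi'(s)^2\leqslant\tfrac2\rho\bigl(s\phi'(s)-\phi(s)+\delta\bigr)\leqslant\tfrac2\rho\bigl(s\phi'(s)+\delta\bigr)$; solving the quadratic for $\phi'(s)$ gives $\phi'(s)\leqslant\tfrac{2s}\rho+\sqrt{2\delta/\rho}$, and integrating, $\phi(s)\leqslant s^2/\rho+s\sqrt{2\delta/\rho}$, which lies below $2(2+\delta)^2s^2/\rho$ as soon as $s\geqslant s_\ast\coloneqq\sqrt{2\rho\delta}\big/\bigl(2(2+\delta)^2-1\bigr)$. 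For $0\leqslant s\leqslant s_\ast$ I would instead use $\phi(s)=\int_0^s(s-u)\phi''(u)\,\dd u\leqslant\tfrac{s^2}{2}\sup_{0\leqslant u\leqslant s_\ast}\Var_{\mu_u}(f)$, which reduces matters to a uniform variance bound on the $\mu_u$.

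The heart of the argument — and the step I expect to be the main obstacle — is that each tilted measure $\mu_s$ again satisfies a defective $\TTE1$ inequality, with only mildly degraded constants. Combining the entropy–tilting identity with the triangle inequality $W_1(\nu,\mu_s)\leqslant W_1(\nu,\mu)+W_1(\mu,\mu_s)$, where \eqref{eq:defective-t1} and the same quadratic trick give $W_1(\nu,\mu)\leqslant\tfrac{2s}\rho+\sqrt{2(H(\nu|\mu_s)+\delta)/\rho}$ and $W_1(\mu,\mu_s)\leqslant\tfrac{2s}\rho+\sqrt{2\delta/\rho}$, one arrives at $W_1^2(\nu,\mu_s)\leqslant\tfrac8\rho\bigl(H(\nu|\mu_s)+2\delta+4s^2/\rho\bigr)$; for $s\leqslant s_\ast$ this is a defective $\TTE1$ for $\mu_s$ with parameters $(\rho/4,\,O(\delta))$, and feeding it into the tail/variance estimate of the first step (now applied to $\mu_s$) gives $\Var_{\mu_s}(f)=O\bigl((1+\delta)/\rho\bigr)$ uniformly over $s\leqslant s_\ast$. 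Plugging this into the small-$s$ bound and combining with the large-$s$ bound yields the displayed estimate on $\phi$, hence \eqref{eq:t1}. The delicate point is purely quantitative: a crude bookkeeping only gives $\phi(s)\leqslant C(2+\delta)^2s^2/\rho$ for a universal $C$, and getting down to $C=2$ requires care with the triangle-inequality losses in the self-improvement step. One cannot shortcut that step by estimating $\Var_{\mu_s}(f)$ directly from the tails of $\mu$: bounding the partition function $\langle e^{sf},\mu\rangle$ from below by $1$ via Jensen is far too lossy and would introduce a spurious multiplicative $e^{O(\delta)}$, which is precisely why the self-improved inequality for $\mu_s$ is needed.
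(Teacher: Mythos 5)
Your proposal is correct in outline and takes a genuinely different route from the paper's. The paper also starts with the Bobkov--Götze dualization of \eqref{eq:defective-t1} into the log-Laplace bound $\log\langle e^{s\varphi},\mu\rangle\leqslant s^2/2\rho+\delta$, but then treats the passage from exponential integrability back to a (clean) $\TTE1$ inequality as a black box: it invokes the Bolley--Villani inequality \cite[Thm.~2.1(ii)]{BolleyVillaniCKP} applied to $\sqrt\rho\,\varphi/2$, which immediately yields $\frac\rho4\lvert\langle\varphi,\nu-\mu\rangle\rvert^2\leqslant 2\bigl(\tfrac32+\log\int e^{\rho\varphi^2\!/4}\dd\mu\bigr)^2H(\nu|\mu)$, and then controls the Gaussian moment by the DGW trick $\int e^{\rho\varphi^2\!/4}\dd\mu=\Expect\bigl[\int e^{\sqrt{\rho/2}\,\varphi\mathcal N}\dd\mu\bigr]\leqslant\sqrt2\,e^\delta$. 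Because Bolley--Villani feeds in the \emph{logarithm} of that moment, the final constant is polynomial in $\delta$ without any further work. Your proposal instead redoes this step by hand: differential inequality for $\phi'$ at large tilt, self-improved defective $\TTE1$ for the tilted measures $\mu_s$ at small tilt, and a variance bound to control $\phi''$. That is essentially a hands-on reproof of (a version of) Bolley--Villani, and it does lead to the stated polynomial-in-$\delta$ constant; the trade-off is that it is considerably longer and the constant-tracking is more delicate, as you flag.

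One step you should make explicit: the claim ``$\Var_{\mu_s}(f)=O((1+\delta)/\rho)$'' does not follow from the sub-Gaussian tail $\mu_s(|g|>t)\leqslant 2e^{\delta'-\rho't^2/2}$ by the naive layer-cake bound $\int_0^\infty 2t\cdot 2e^{\delta'-\rho't^2/2}\dd t=4e^{\delta'}\!/\rho'$, which still carries the exponential $e^{\delta'}$ you are trying to avoid. You need the truncated version: with $t_0=\sqrt{2(\delta'+\log2)/\rho'}$ chosen so that the tail bound equals $1$, split $\int_0^\infty 2t\,\mu_s(|g|>t)\dd t\leqslant t_0^2+\int_{t_0}^\infty 4te^{\delta'-\rho't^2/2}\dd t=t_0^2+2/\rho'=(2\delta'+2\log2+2)/\rho'$, which is linear in $\delta'$. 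With this, plugging in $\rho'=\rho/4$ and $\delta'\leqslant 2\delta+O(\delta)$ for $s\leqslant s_*$ gives $\Var_{\mu_s}(f)\leqslant C(1+\delta)/\rho$ with an explicit $C$, and one checks (e.g.\ $C(1+\delta)/2\leqslant 2(2+\delta)^2$ for the $C$ that comes out) that the small-$s$ piece indeed lands under the target parabola. Without the truncation, the approach fails for the reason you yourself diagnose at the end, so this step should be stated as part of the proof rather than left implicit.
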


\begin{rem}
The $\TTE1$ tightening is an easy consequence of the Bolley--Villani inequality
\cite{BolleyVillaniCKP}
and, interestingly, does not require
any additional information on small scales, i.e.,
when $H(\nu | \mu) \ll \delta$.
This contrasts with the case of log-Sobolev inequality,
which requires an additional Poincaré for achieving a quantitative tightening.%
\footnote{A defective log-Sobolev inequality implies the existence
of a spectral gap, i.e., Poincaré, but no quantitative constant is possible.
See Theorem~1 and Proposition~11
in the work of \textcite{MicloHyperboundedness}.}
See e.g.\ \cite[Proposition~5.1.3]{BGLMarkov} for the standard
log-Sobolev tightening proof using
the Rothaus lemma, or \cite[Proposition~5]{nulsi} for an alternative
proof based on hypercontractivity.
\end{rem}

Next, we derive a defective $\TTE2$ inequality for the $N$-particle system,
building on the non-linear $\TTE2$ inequality of the mean-field system, with
the defect representing an approximation error.
This finding extends the tensorization property
of the classical $\TTE2$ inequality,
as established by \textcite{MartonConcentration}
and \textcite{TalagrandTransportation}
in the absence of mean field interactions.

To state the result, we introduce the notion of positivity for kernel
functions. A symmetric kernel $U\colon\X\times\X\to\R$ is of \emph{positive
type} if, for any signed measure $\nu$ on $\X$ with $\int_{\X}\dd\nu=0$, it
satisfies $\iint_{\X^2} U(x,y) \nu^{\otimes 2}(\dd x\dd y) \geqslant 0$. For
$\X = \R^d$, we restrict $U$ to have quadratic growth and impose this condition
only for $\nu$ with finite second moments.
We also refer to this property as \emph{flat convexity},
as for positive kernels, the associated quadratic form
on the space of probability measures
is convex with respect to flat interpolations.

\begin{thm}
\label{thm:defective-t2}
Let $\X = \R^d$.
Suppose that $W = W_+ - W_-$ where both $W_+$ and $W_-$
are $\mathcal C^2$ and of positive type
with $\lVert\nabla_{1,2}^2 W_+ \rVert_{L^\infty} \leqslant L^+_W$
and $\lVert\nabla_{1,2}^2 W_- \rVert_{L^\infty} \leqslant L^-_W$.
Suppose that there exist
$\varepsilon \in [0,1/2]$ and $\lambda \geqslant 0$ such that for all
$\nu \in \mathcal P_2(\R^d)$,
\begin{equation}
\label{eq:mf-neg-t2}
\frac{\lambda}{2} W_2^2(\nu, m_*) \leqslant
- \frac {1+\varepsilon}2 \langle W_-, (\nu - m_*)^{\otimes 2}\rangle
+ H(\nu | m_*).
\end{equation}
Then for all $\nu^N \in \mathcal P_2(\R^{Nd})$,
\begin{equation}
\label{eq:ps-defective-t2}
\frac{\lambda_N}{2}
W_2^2(\nu^N, m_*^{\otimes N}) \leqslant
H(\nu^N|m^N_*) + \delta_N,
\end{equation}
where $\lambda_N$, $\delta_N$ are defined by
\begin{align}
\lambda_N &= \begin{cases}
\makebox[0pt][l]
{$\lambda - 3(1/\sqrt{2\varepsilon}+3)L^-_WN^{-1},$}
\hphantom{
\bigl( 3(1/\sqrt{2\varepsilon}+3) L^-_W + L^+_W\bigr) \Var m_*,
}
& \varepsilon \in (0,1/2], \\
\lambda - 2(\log N+3)L^-_WN^{-1},
& \varepsilon = 0,
\end{cases} \label{eq:def-lambda-N}\\
\delta_N &= \begin{cases}
\bigl( 3(1/\sqrt{2\varepsilon}+3) L^-_W + L^+_W\bigr) \Var m_*,
& \varepsilon \in (0,1/2], \\
\bigl( 2(\log N + 3)L^-_W + L^+_W\bigr) \Var m_*,
& \varepsilon = 0.
\end{cases} \label{eq:def-delta-N}
\end{align}
Here $\Var m_*$ denotes the sum of the variances of the individual
components for a random variable with distribution $m_*$.
Consequently, in the case $\lambda_N > 0$,
for all $\nu^N \in \mathcal P_1(\R^{Nd})$,
\begin{equation}
\label{eq:ps-t1}
W_1^2(\nu^N, m^N_*) \leqslant \frac{64(1+\delta_N)^2}{\lambda_N}
H(\nu^N| m^N_*).
\end{equation}
Moreover, the $\TTE1$ inequality \eqref{eq:ps-t1} with the same constant
holds for any marginal distribution of $m^N_*$.

In particular, if $W$ consists solely of a positive part, i.e., $W = W_+$,
and if $m_*$ satisfies a $\TTE2$ inequality
with constant $\lambda$, then \eqref{eq:ps-defective-t2} holds
with $\lambda_N = \lambda$ and $\delta_N = L_W^+ \Var m_*$.
Consequently, \eqref{eq:ps-t1} holds with the same constants.
\end{thm}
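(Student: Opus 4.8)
The plan is to obtain the whole theorem from the defective inequality~\eqref{eq:ps-defective-t2}, whose proof rests on two ingredients: a \emph{modulated free energy} identity and a dimension‑free chain rule for the quadratic cost against a product measure. For the first ingredient, writing out the densities of $m^N_*$ and $m_*^{\otimes N}$ and using the fixed point $m_*=\Pi[m_*]$ from~\eqref{eq:m*-fixed-point}--\eqref{eq:def-Pi}, the ratio takes the modulated form
\[
\frac{m^N_*(\bm x)}{m_*^{\otimes N}(\bm x)}=\frac1Z\exp\Bigl(-\tfrac N2\langle W,(\mu_{\bm x}-m_*)^{\otimes 2}\rangle\Bigr),\qquad Z=\int_{\R^{Nd}}\exp\Bigl(-\tfrac N2\langle W,(\mu_{\bm x}-m_*)^{\otimes 2}\rangle\Bigr)\,m_*^{\otimes N}(\dd\bm x),
\]
so that $H(\nu^N|m^N_*)=H(\nu^N|m_*^{\otimes N})+\tfrac N2\Expect_{\nu^N}\langle W,(\mu_{\bm X}-m_*)^{\otimes 2}\rangle+\log Z$ for every $\nu^N$. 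Since $W_+$ is of positive type and $W_-$ is nonnegative in the quadratic‑form sense, the two‑point Taylor bound $\langle W,(\delta_x-\delta_y)^{\otimes 2}\rangle\leqslant L_W^+\lvert x-y\rvert^2$ (which only uses the mixed Hessian of $W_+$) together with Jensen gives the one‑sided control $-\log Z\leqslant\tfrac N2\Expect_{m_*^{\otimes N}}\langle W,(\mu_{\bm X}-m_*)^{\otimes 2}\rangle\leqslant L_W^+\Var m_*$; this is where a Jabin--Z.~Wang–type bound on the partition function enters.

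For the second ingredient, disintegrate $\nu^N$ into its one‑step conditional laws $\nu^{k|[k-1]}_{\x^{[k-1]}}$ and glue the optimal couplings of each with $m_*$: the result is a coupling of $\nu^N$ with $m_*^{\otimes N}$, whose second marginal is $m_*^{\otimes N}$ because, conditionally on the past, each step outputs a point of law $m_*$; hence $W_2^2(\nu^N,m_*^{\otimes N})\leqslant\sum_{k=1}^N\Expect_{\x^{[k-1]}\sim\nu^N}\bigl[W_2^2(\nu^{k|[k-1]}_{\x^{[k-1]}},m_*)\bigr]$. Applying the mean‑field hypothesis~\eqref{eq:mf-neg-t2} to each conditional, averaging, summing and using the entropy chain rule $\sum_k\Expect_{\x^{[k-1]}}\bigl[H(\nu^{k|[k-1]}_{\x^{[k-1]}}|m_*)\bigr]=H(\nu^N|m_*^{\otimes N})$ gives
\[
\tfrac\lambda2W_2^2(\nu^N,m_*^{\otimes N})\leqslant-\tfrac{1+\varepsilon}2\sum_{k=1}^N\Expect_{\x^{[k-1]}}\bigl[\langle W_-,(\nu^{k|[k-1]}_{\x^{[k-1]}}-m_*)^{\otimes 2}\rangle\bigr]+H(\nu^N|m_*^{\otimes N}).
\]
Substituting the modulated free energy identity for $H(\nu^N|m_*^{\otimes N})$, discarding the nonnegative term $\tfrac N2\Expect_{\nu^N}\langle W_+,(\mu_{\bm X}-m_*)^{\otimes 2}\rangle$ and inserting the bound on $-\log Z$, inequality~\eqref{eq:ps-defective-t2} reduces to the single \emph{fluctuation estimate}
\begin{multline*}
\tfrac N2\Expect_{\nu^N}\bigl[\langle W_-,(\mu_{\bm X}-m_*)^{\otimes 2}\rangle\bigr]-\tfrac{1+\varepsilon}2\sum_{k=1}^N\Expect_{\x^{[k-1]}}\bigl[\langle W_-,(\nu^{k|[k-1]}_{\x^{[k-1]}}-m_*)^{\otimes 2}\rangle\bigr]\\
\leqslant\tfrac{\lambda-\lambda_N}2W_2^2(\nu^N,m_*^{\otimes N})+\delta_N-L_W^+\Var m_*.
\end{multline*}

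The fluctuation estimate is the main obstacle; I would prove it by linearizing through the Hilbert embedding $\Phi$ attached to the positive‑type kernel $W_-$. With $Z_k:=\Phi(\delta_{x^k}-m_*)$ one has $\langle W_-,(\mu_{\bm X}-m_*)^{\otimes 2}\rangle=\lvert N^{-1}\sum_kZ_k\rvert^2$ and $\langle W_-,(\nu^{k|[k-1]}_{\x^{[k-1]}}-m_*)^{\otimes 2}\rangle=\lvert\Expect[Z_k|\x^{[k-1]}]\rvert^2$. Splitting $Z_k=A_k+D_k$ into its $\sigma(\x^{[k-1]})$‑measurable part $A_k=\Expect[Z_k|\x^{[k-1]}]$ and the martingale increment $D_k$, orthogonality of the $D_k$'s gives $\Expect\lvert\sum_kD_k\rvert^2=\sum_k\Expect\lvert D_k\rvert^2\leqslant\sum_k\Expect\lvert Z_k\rvert^2$, while the mixed‑Hessian bound on $W_-$ applied to the $m_*$‑centred kernel $\bar W_-(x,y)=W_-(x,y)-\langle W_-(x,\cdot),m_*\rangle-\langle W_-(\cdot,y),m_*\rangle+\langle W_-,m_*^{\otimes 2}\rangle$ yields the per‑particle control $\Expect\lvert Z_k\rvert^2=\Expect_{\nu^N}\bar W_-(x^k,x^k)\lesssim L_W^-\bigl(\Expect_{\nu^N}\lvert x^k-\bar x\rvert^2+\Var m_*\bigr)$ ($\bar x$ the barycentre of $m_*$), which after comparison with the optimal coupling sums to $\sum_k\Expect\lvert Z_k\rvert^2\lesssim L_W^-\bigl(W_2^2(\nu^N,m_*^{\otimes N})+N\Var m_*\bigr)$. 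Combining $N^{-1}\Expect\lvert\sum_kZ_k\rvert^2\leqslant N^{-1}\bigl((\Expect\lvert\sum_kD_k\rvert^2)^{1/2}+(\Expect\lvert\sum_kA_k\rvert^2)^{1/2}\bigr)^2$ with Cauchy--Schwarz $\lvert\sum_kA_k\rvert^2\leqslant N\sum_k\lvert A_k\rvert^2$, the cross term is absorbed into $\tfrac{1+\varepsilon}2\sum_k\Expect\lvert A_k\rvert^2$ by a Young inequality whose weight is calibrated by the slack $\varepsilon$, leaving a remainder of order $L_W^-N^{-1}W_2^2(\nu^N,m_*^{\otimes N})+L_W^-\Var m_*$, with the precise constants of~\eqref{eq:def-lambda-N}--\eqref{eq:def-delta-N} after optimizing that weight. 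I expect the genuinely delicate point to be the critical case $\varepsilon=0$: there is no slack to spend on the cross term, and one must instead control it by summation by parts over the martingale partial sums $\sum_{k<l}D_k$ together with a dyadic splitting of the index range, which is what forces the $\log N$ in the $\varepsilon=0$ branch.

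Finally, the transport--entropy consequences are cheap. Taking $\nu^N=m^N_*$ in~\eqref{eq:ps-defective-t2} gives $W_2^2(m^N_*,m_*^{\otimes N})\leqslant2\delta_N/\lambda_N$; the triangle inequality $W_1(\nu^N,m^N_*)\leqslant W_1(\nu^N,m_*^{\otimes N})+W_1(m_*^{\otimes N},m^N_*)\leqslant W_2(\nu^N,m_*^{\otimes N})+W_2(m_*^{\otimes N},m^N_*)$ combined with $(a+b)^2\leqslant2a^2+2b^2$ and~\eqref{eq:ps-defective-t2} yields the defective $\TTE1$ inequality $W_1^2(\nu^N,m^N_*)\leqslant\tfrac2{\lambda_N/2}\bigl(H(\nu^N|m^N_*)+2\delta_N\bigr)$, to which the $\TTE1$ tightening (Theorem~\ref{thm:t1-tightening} with $\rho=\lambda_N/2$, $\delta=2\delta_N$) applies and gives exactly~\eqref{eq:ps-t1}. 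For a $k$‑particle marginal, extend a competitor $\nu\in\mathcal P_1(\R^{kd})$ to $\R^{Nd}$ by the conditional law of $m^N_*$ given the first $k$ coordinates: the entropy chain rule leaves $H(\,\cdot\,|m^N_*)$ unchanged, while $W_1$ only decreases under the coordinate projection, so the marginal inherits the same inequality. The closing assertion is the special case $W_-=0$, $L_W^-=0$, where~\eqref{eq:mf-neg-t2} is precisely the $\TTE2$ inequality for $m_*$ with constant $\lambda$ and the formulas~\eqref{eq:def-lambda-N}--\eqref{eq:def-delta-N} collapse to $\lambda_N=\lambda$, $\delta_N=L_W^+\Var m_*$.
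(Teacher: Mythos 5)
Your overall strategy mirrors the paper's: a modulated free energy identity expressing $H(\nu^N|m^N_*)$ in terms of $H(\nu^N|m_*^{\otimes N})$, the $W_-$-energy and the partition function; a gluing of conditional optimal couplings yielding $W_2^2(\nu^N,m_*^{\otimes N})\leqslant\sum_k\Expect W_2^2(\nu^{k|[k-1]},m_*)$; the mean‑field hypothesis applied conditionally and summed by the entropy chain rule; and the Jensen/minimizer bound $-\log Z\leqslant L_W^+\Var m_*$. The reduction to your ``fluctuation estimate'' is exactly the role played by the paper's Lemma~\ref{lem:conditional-energy}, and your RKHS reformulation of the two energy quantities (as $\lVert N^{-1}\sum_kZ_k\rVert^2$ and $\lVert\Expect[Z_k|\x^{[k-1]}]\rVert^2$) is a clean and legitimate way to phrase it. Your handling of the tightening (triangle inequality plus Theorem~\ref{thm:t1-tightening} with $\rho=\lambda_N/2$, $\delta=2\delta_N$) reproduces the constant $64(1+\delta_N)^2/\lambda_N$ exactly, and your marginal argument — extend the competitor by the conditional law of $m^N_*$, which keeps $H(\cdot|m^N_*)$ fixed and can only increase $W_1$ — is a valid alternative to the paper's dual/test‑function route.

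The genuine gap is in the fluctuation estimate itself, and it is quantitative. Your proposal decomposes $Z_k=A_k+D_k$, uses martingale orthogonality for $\sum D_k$, Cauchy–Schwarz $\lVert\sum A_k\rVert^2\leqslant N\sum\lVert A_k\rVert^2$, and then a single flat Young inequality with weight calibrated by $\varepsilon$ to absorb the cross term into $\tfrac{1+\varepsilon}2\sum\Expect\lVert A_k\rVert^2$. Carrying this out gives a remainder of order $\tfrac{1+1/\varepsilon}{2N}\sum_k\Expect\lVert Z_k\rVert^2$, so the resulting $\lambda-\lambda_N$ and $\delta_N-L_W^+\Var m_*$ scale as $1/\varepsilon$, not as $1/\sqrt{2\varepsilon}$ as required by \eqref{eq:def-lambda-N}--\eqref{eq:def-delta-N}. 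The paper obtains the $1/\sqrt{\varepsilon}$ dependence not from a flat weight but from a \emph{level‑dependent} weighted Cauchy–Schwarz, $\langle\sum_{i<k}Z_{\sigma(i)},A_{\sigma(k)}\rangle\leqslant\tfrac{c_k}{2(k-1)}\lVert\sum_{i<k}Z_{\sigma(i)}\rVert^2+\tfrac{k-1}{2c_k}\lVert A_{\sigma(k)}\rVert^2$ with $c_k=((k-1)/(N-1))^\alpha$ and $\alpha=\sqrt{\varepsilon/2}$, followed by averaging over all permutations $\sigma$; the choice of $\alpha$ is tuned so that the coefficient $\bigl(\tfrac2{1+\alpha}-\tfrac1{1+2\alpha}\bigr)(1+\varepsilon)$ of the full energy term reaches $1$ with exactly the needed slack. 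Neither the $k$-dependent weight nor the symmetrization over $\sigma$ appears in your argument, and lumping all the $A_k$'s through a single $N\sum\lVert A_k\rVert^2$ discards precisely the structure that the choice of $c_k$ exploits. A similar remark applies at $\varepsilon=0$: the $\log N$ in the paper comes from the harmonic sum $\sum_{k}c_k^2/(k-1)$ with $c_k\equiv1$, not from a dyadic decomposition of martingale partial sums — your sketch for this branch is conjectural and disconnected from what the constants actually require. Until the fluctuation estimate is proved with the $k$-dependent calibration (or an equivalent device producing the $O(1/\sqrt{\varepsilon})$ and $O(\log N)$ constants), the proof is incomplete and the claimed $\lambda_N$, $\delta_N$ are not established.
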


\begin{rem}
Ideally, one would prefer to replace the assumption \eqref{eq:mf-neg-t2}
with the weaker:
\[
\frac\lambda 2W_2^2(\nu, m_*)
\leqslant \frac 12 \langle W, (\nu - m_*)^{\otimes 2}\rangle
+ H(\nu | m_*)
\]
and establish \eqref{eq:ps-defective-t2} with
$\lambda_N = \lambda - O(1/N)$ and $\delta_N = O(1)$.
Such behavior is optimal as it is attained by Gaussian examples.
In other words, we would like to set $\varepsilon = 0$
and exploit the extra concentration provided by the convex energy part.
This does not appear feasible within our current proof method.
Nonetheless, in the special case where $W = - W_-$,
since $\varepsilon$ can be made arbitrarily small,
the theorem yields arbitrarily small loss in the concentration constant
$\lambda_N$, while $\delta_N$ remains $O(1)$.
\end{rem}

\begin{rem}
A similar theorem can be formulated for the case $\X = \T^d$
where the boundedness of $W_-$ replaces the constant $L_W^-$.
Notably, this approach allows us to eliminate
the dependency on $N$ in $\lambda_N$.
\end{rem}

As a side note, the key step of the proof of Theorem~\ref{thm:defective-t2}
readily yields a quantitative large deviation estimate
of the Jabin--Z.~Wang type.

\begin{cor}
\label{cor:jw}
In the general case stated in Theorem~\ref{thm:defective-t2},
if $\varepsilon \in (0,1/2]$ and $\lambda_N \geqslant 0$,
then the following holds:
\[
\log \int_{\R^{Nd}}
\exp \biggl( - \frac N2 \langle W, (\mu_{\vect x} - m_*)^{\otimes 2}\rangle
\biggr) m_*^{\otimes N}(\dd \vect x)
\leqslant 3\biggl(\frac{1}{\sqrt{2\varepsilon}} + 3\biggr) L_W^- \Var m_*.
\]
\end{cor}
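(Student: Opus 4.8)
The plan is to read the estimate off the variational principle that already underpins the proof of Theorem~\ref{thm:defective-t2}. Write $\mathcal Z_N$ for the integral in the statement, $\mathcal Z_N \coloneqq \int_{\R^{Nd}} \exp\bigl(-\tfrac N2\langle W, (\mu_{\vect x}-m_*)^{\otimes 2}\rangle\bigr)\, m_*^{\otimes N}(\dd\vect x)$; using the fixed-point identity $m_* = \Pi[m_*]$ one checks that $\log(m^N_*/m_*^{\otimes N})(\vect x) = -\tfrac N2\langle W, (\mu_{\vect x}-m_*)^{\otimes 2}\rangle - \log\mathcal Z_N$, so $\mathcal Z_N$ is exactly the normalizing constant that appears when comparing $m^N_*$ with the tensorized reference. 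Since $W = W_+ - W_-$ with $W_+$ of positive type, $\langle W_+, (\mu_{\vect x}-m_*)^{\otimes 2}\rangle \geqslant 0$, so the integrand is dominated by $\exp\bigl(\tfrac N2\langle W_-, (\mu_{\vect x}-m_*)^{\otimes 2}\rangle\bigr)$ and it suffices to bound $\log\int_{\R^{Nd}}\exp\bigl(\tfrac N2\langle W_-,(\mu_{\vect x}-m_*)^{\otimes 2}\rangle\bigr)\,m_*^{\otimes N}(\dd\vect x)$. I would then invoke the Gibbs variational (Donsker--Varadhan) formula, which identifies this logarithm with $\sup_{\nu^N}\bigl\{\tfrac N2\Expect_{\nu^N}\langle W_-,(\mu_{\vect x}-m_*)^{\otimes 2}\rangle - H(\nu^N|m_*^{\otimes N})\bigr\}$, the supremum being effectively over $\nu^N\in\mathcal P_2(\R^{Nd})$ of finite entropy because $W_-$ has at most quadratic growth; a routine truncation handles the a priori possibility that a near-maximizer escapes $\mathcal P_2$.

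It then remains to lower-bound $H(\nu^N|m_*^{\otimes N}) - \tfrac N2\Expect_{\nu^N}\langle W_-,(\mu_{\vect x}-m_*)^{\otimes 2}\rangle$ uniformly in $\nu^N$, and I expect this to be the main obstacle. However, it is precisely the conditional decomposition of the modulated free energy that drives the proof of Theorem~\ref{thm:defective-t2}, read in the special case of a purely attractive interaction $W = -W_-$ (for which the resulting defect carries no $L_W^+$ term): for $\varepsilon\in(0,1/2]$ and every $\nu^N\in\mathcal P_2(\R^{Nd})$ one obtains
\[
H(\nu^N|m_*^{\otimes N}) - \tfrac N2\Expect_{\nu^N}\langle W_-,(\mu_{\vect x}-m_*)^{\otimes 2}\rangle \;\geqslant\; \tfrac{\lambda_N}{2}\,W_2^2(\nu^N,m_*^{\otimes N}) - 3\Bigl(\tfrac{1}{\sqrt{2\varepsilon}}+3\Bigr)L_W^-\Var m_*,
\]
with $\lambda_N$ as in \eqref{eq:def-lambda-N}. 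The mechanism I would use is the entropy chain rule across the $N$ coordinates, a telescoping of $\tfrac N2\langle W_-,(\mu_{\vect x}-m_*)^{\otimes 2}\rangle$ along the coordinates that exploits the positive type of $W_-$ together with the slack $\varepsilon$ to complete a square, the mean-field coercivity \eqref{eq:mf-neg-t2} fed into the conditional law of each successive particle, and a control of the emerging diagonal and boundary remainders by $O(L_W^-\Var m_*)$, the tracking of the $1/\sqrt{2\varepsilon}$ factor being the delicate point. Since all of this is already carried out inside the proof of Theorem~\ref{thm:defective-t2}, here one simply invokes it.

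Finally, substituting this lower bound into the variational formula and using the hypothesis $\lambda_N\geqslant 0$ — so that $-\tfrac{\lambda_N}{2}W_2^2(\nu^N,m_*^{\otimes N})\leqslant 0$, with equality at $\nu^N = m_*^{\otimes N}$ — gives
\[
\log\mathcal Z_N \;\leqslant\; \sup_{\nu^N}\Bigl\{-\tfrac{\lambda_N}{2}\,W_2^2(\nu^N,m_*^{\otimes N})\Bigr\} + 3\Bigl(\tfrac{1}{\sqrt{2\varepsilon}}+3\Bigr)L_W^-\Var m_* \;=\; 3\Bigl(\tfrac{1}{\sqrt{2\varepsilon}}+3\Bigr)L_W^-\Var m_*,
\]
which is the asserted estimate. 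Beyond importing the modulated-free-energy lower bound, the only bookkeeping specific to this corollary is to confirm that discarding the convex part $W_+$ costs precisely the $L_W^+\Var m_*$ summand of the defect $\delta_N$ in \eqref{eq:def-delta-N}, and that the Donsker--Varadhan identity is legitimate under the quadratic-growth hypothesis on $W_-$.
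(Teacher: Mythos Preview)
Your proposal is correct and follows essentially the same route as the paper: apply the Donsker--Varadhan variational principle and feed in the lower bound on $H(\nu^N|m_*^{\otimes N}) - \tfrac N2\Expect_{\nu^N}\langle W_-,(\mu_{\vect x}-m_*)^{\otimes 2}\rangle$ established in Step~2 of the proof of Theorem~\ref{thm:defective-t2}, then discard the non-negative $\tfrac{\lambda_N}{2}W_2^2$ term. The only cosmetic difference is that you drop the positive part $W_+$ pointwise in the integrand before invoking Donsker--Varadhan, whereas the paper applies Donsker--Varadhan to the full exponent, recognizes the result as $-\inf_{\nu^N}\mathcal F^N(\nu^N|m_*)$, and drops $W_+$ inside the variational formula; both orderings are equivalent.
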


\begin{rem}
Even when reformulated in the torus setting, this corollary does not recover
the original result of \textcite[Theorem~4]{JabinWang}, where a smallness
condition on the exponential moment of $W$ replaces the assumption
\eqref{eq:mf-neg-t2}. This limitation arises because our approach distinguishes
between the flat convex and concave components in the interaction energy $W$.
However, constructing a spectral decomposition that preserves
suitable controls for $W$ appears to be challenging.
\end{rem}

Theorems~\ref{thm:t1-tightening} and \ref{thm:defective-t2}
enable us to verify the conditions of Theorem~\ref{thm:sharp}
in a wide range of scenarios involving unbounded forces at infinity.
We first present a general result applicable to flat semi-convex
and displacement convex cases.
For concrete examples, including
the one-dimensional Coulomb gas and two-layer neural networks,
we refer readers to the discussion at the end of this section.
Notably, in all these cases,
Lacker's original approach \cite[Theorem~2.2]{LackerQuantitative}
yields only perturbative results for sufficiently high temperatures,
whereas our method overcomes this restriction.

\begin{prop}
\label{prop:exm}
Suppose that one of the following assumptions holds true.
\begin{itemize}
\item (Flat semi-convexity \dots)
The interaction kernel writes $W = W_+ - W_-$
with both $W_+$, $W_-$ being continuous and of positive type.
The probability measures $\Pi[m]$ satisfy a $\rho_0$-log-Sobolev
inequality uniformly in $m \in \mathcal P(\X)$ for some $\rho_0 > 0$.
Moreover, one of the followings holds.
\begin{itemize}
\item (\,\dots\ with bounded force)
The kernels $W$, $W_-$ satisfy
$\lVert \nabla_1W\rVert_{L^\infty} \leqslant M_W < \infty$
and $\lVert \nabla_1W_-\rVert_{L^\infty} \leqslant M_W^-
< \sqrt{\rho_0}/2$.
In this case, we set
\[
\rho = \rho_0\biggl(1 - \frac{2M^-_W}{\sqrt{\rho_0}}\biggr),\qquad
\gamma = 2M_W,\qquad
M = 4M_W^2.
\]
\item (\,\dots\ with Lipschitz force)
The state space $\X$ is $\R^d$.
The kernels $W_+$, $W_-$ are $\mathcal C^2$ and satisfy
$\lVert \nabla_{1,2}^2W_+\rVert_{L^\infty} \leqslant L_W^+ < \infty$
and $\lVert \nabla_{1,2}^2W_-\rVert_{L^\infty} \leqslant L_W^- < \rho_0/2$.
In this case, we set
\begin{align*}
\rho &= \rho_0\biggl( 1 - \frac{2L^-_W}{\rho_0}\biggr),\\
\gamma &= \frac{64(1+\delta_N)^2(L^+_W+L^-_W)^2}{\lambda_N},\\
M &= 4(L^+_W+L^-_W)^2
\biggl( \frac{\delta_N}{\lambda_NN} + \frac{d}{\rho_0}\biggr),
\end{align*}
where $\lambda_N$, $\delta_N$ are given by
\eqref{eq:def-lambda-N}, \eqref{eq:def-delta-N}
respectively with $\varepsilon$ replaced by $1/2$
and $\Var m_*$ by $d / \rho_0$.
\end{itemize}

\item (Displacement convexity)
The state space $\X$ is $\R^d$.
The confinement $V$ is $\mathcal C^2$ and satisfies
$\nabla^2V \succcurlyeq \kappa_V$ for $\kappa_V > 0$
and the interaction $W$ is given by
$W(x,y) = W'(x - y)$ for some $\mathcal C^2$ and even $W' \colon \R^d \to \R$
satisfying $\nabla^2W' \succcurlyeq 0$
and $\lVert \nabla^2W'\rVert_{L^\infty} \leqslant L_W$.
In this case, we set
\[
\rho = \frac{\kappa_V}{2(1 + L_W^2/\kappa_V^2)},\qquad
\gamma = \frac{2L_W^2}{\kappa_V},\qquad
M = 4L_W^2 \biggl(1 + \frac{L_W^2}{\kappa_V^2N}\biggr)\frac{d}{\kappa_V}.
\]
\end{itemize}
Then Theorem~\ref{thm:sharp} applies with the constants
$\rho$, $\gamma$, $M$ whenever
$N$ is large enough such that $\gamma \geqslant 0$.
\end{prop}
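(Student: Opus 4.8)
The plan is to verify, in each of the three settings, the three hypotheses of Theorem~\ref{thm:sharp} --- the log-Sobolev pair, the transport inequality for $m_*$ and for the conditional laws $m^{N,k|[k-1]}_{*,\x^{[k-1]}}$, and the square integrability --- and then invoke that theorem with the displayed constants. Two facts are used throughout: $m_*=\Pi[m_*]$, so any functional inequality established uniformly over $\{\Pi[m]:m\in\mathcal P(\X)\}$ also holds for $m_*$; and, in the displacement convex case, the effective potential $V(\cdot)+\langle W(\cdot,\cdot),m\rangle$ of $\Pi[m]$ is $\kappa_V$-convex since $W(x,y)=W'(x-y)$ with $\nabla^2W'\succcurlyeq0$, so by Bakry--Émery the measures $\Pi[m]$, and more generally the $\kappa_V$-log-concave conditional laws appearing below, satisfy a $\kappa_V$-log-Sobolev, hence a $\kappa_V$-$\TTE2$, inequality uniformly.

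\emph{Log-Sobolev inequalities.} The linear bound $2\rho H(m|m_*)\le I(m|m_*)$ is immediate since $m_*=\Pi[m_*]$ satisfies the $\rho_0$- (resp.\ $\kappa_V$-) LSI and $\rho$ is no larger. For the nonlinear bound $2\rho H(m|m_*)\le I(m|\Pi[m])$ in the flat semi-convex settings, I would start from the identity
\[
H(m|\Pi[m])=H(m|m_*)+\langle W,(m-m_*)^{\otimes2}\rangle
+\Bigl(\log\tfrac{Z[m]}{Z[m_*]}+\langle W,m_*\otimes(m-m_*)\rangle\Bigr),
\]
$Z[m]$ being the normalization of $\Pi[m]$; the last bracket is $\ge0$ by Jensen, and, dropping the nonnegative $\langle W_+,(m-m_*)^{\otimes2}\rangle$ too, this gives $H(m|\Pi[m])\ge H(m|m_*)-\langle W_-,(m-m_*)^{\otimes2}\rangle$. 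Bounding the concave contribution by $L_W^-W_2^2(m,m_*)$ (Lipschitz subcase) or $2M_W^-W_1(m,m_*)$ (bounded-force subcase) and invoking the $\TTE2$/$\TTE1$ inequality for $m_*$ turns it into a fraction of $H(m|m_*)$; together with the uniform LSI $I(m|\Pi[m])\ge2\rho_0H(m|\Pi[m])$ this yields the nonlinear LSI with the stated $\rho$. In the displacement convex setting I would instead use that $\mathcal F$ is $\kappa_V$-displacement convex, so $I(m|\Pi[m])$, the squared Wasserstein metric slope of $\mathcal F$ at $m$, satisfies $I(m|\Pi[m])\ge2\kappa_V\bigl(\mathcal F(m)-\mathcal F(m_*)\bigr)$; inserting $\mathcal F(m)-\mathcal F(m_*)=H(m|m_*)+\tfrac12\langle W,(m-m_*)^{\otimes2}\rangle$ with $\lvert\langle W,(m-m_*)^{\otimes2}\rangle\rvert\le L_WW_2^2(m,m_*)\le(2L_W/\kappa_V)H(m|m_*)$ and optimising the resulting quadratic produces the correction factor $1+L_W^2/\kappa_V^2$ in $\rho$.

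\emph{Transport inequality.} For $\mu=m_*$ the inequality follows from the test-function bound on $\nabla_1W(x^k,\cdot)$: its $L^\infty$ bound and Pinsker in the bounded case, its Lipschitz bound and the $\TTE1$ for $m_*$ otherwise. For $\mu=m^{N,k|[k-1]}_{*,\x^{[k-1]}}$ two settings are easy: the bounded-force case is again Pinsker; and in the displacement convex case the law of $(x^k,\dots,x^N)$ conditioned on $\x^{[k-1]}$ is $\kappa_V$-log-concave, since its potential augments $\sum_{i\ge k}V(x^i)$ by the convex terms $\tfrac1{2N}\sum_{i,j\ge k}W'(x^i-x^j)$ and $\tfrac1N\sum_{i\ge k,\,j<k}W'(x^i-x^j)$, hence so is its $x^k$-marginal, which thus satisfies a $\kappa_V$-$\TTE2$ inequality; with $\lvert\langle\nabla_1W(x^k,\cdot),\nu-\mu\rangle\rvert\le L_WW_1(\nu,\mu)$ this gives $\gamma=2L_W^2/\kappa_V$. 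The substantive case is $\mu=m^{N,k|[k-1]}_{*,\x^{[k-1]}}$ in the flat semi-convex Lipschitz setting, where no log-concavity is available. Here I would observe that the law of $(x^k,\dots,x^N)$ conditioned on $\x^{[k-1]}$ is itself a mean field Gibbs measure: its proximal Gibbs map is $m\mapsto\Pi\bigl[\tfrac{k-1}N\mu_{\x^{[k-1]}}+\tfrac{N-k+1}Nm\bigr]$, its interaction kernels are $\tfrac{N-k+1}NW_\pm$ (so their Hessians stay bounded by $L_W^\pm$), and its mean field measure still obeys a version of \eqref{eq:mf-neg-t2} with $N$-uniform constants. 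Running the conditional modulated-free-energy argument behind Theorem~\ref{thm:defective-t2} for this conditioned system yields a defective $\TTE2$, hence a defective $\TTE1$; Theorem~\ref{thm:t1-tightening} tightens the latter into a genuine $\TTE1$ with a dimension-free constant, and passing to the $x^k$-marginal and composing with the Lipschitz bound on $\nabla_1W(x^k,\cdot)$ gives the stated $\gamma$. I expect this step --- propagating a clean transport inequality through the conditioning, uniformly in $k\le N$ and $\x^{[k]}$, including $k$ comparable to $N$ where the mean field picture degenerates --- to be the main obstacle.

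\emph{Square integrability and conclusion.} In the bounded-force setting $\lvert\nabla_1W(x,y)-\langle\nabla_1W(x,\cdot),m_*\rangle\rvert\le2M_W$ pointwise, giving $M=4M_W^2$. Otherwise I would write this quantity as $\langle\nabla_1W(x,\cdot),\delta_y-m_*\rangle$, bound it by the Lipschitz constant of $\nabla_1W(x,\cdot)$ times $W_1(\delta_y,m_*)\le(\Expect_{Z\sim m_*}\lvert y-Z\rvert^2)^{1/2}$, and integrate against $m^{N,2}_*$; the resulting second moments are controlled by $\Var m_*\le d/\rho_0$ (resp.\ $d/\kappa_V$) through the Poincaré inequality, together with the fluctuation estimate $W_2^2(m^{N,1}_*,m_*)\le\tfrac1NW_2^2(m^N_*,m_*^{\otimes N})\le2\delta_N/(\lambda_NN)$, which comes from \eqref{eq:ps-defective-t2} evaluated at $\nu^N=m^N_*$ (a direct modulated-free-energy estimate playing the same role and producing the $1/N$ term in $M$ in the displacement convex case). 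With all three hypotheses in hand, Theorem~\ref{thm:sharp} applies with the displayed $\rho,\gamma,M$ for every $N$ large enough that $\gamma\ge0$, which is the assertion.
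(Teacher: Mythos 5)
Your plan tracks the paper's proof closely in its overall structure: identifying the conditional law $\Law(\vect X^{\llbracket k,N\rrbracket}\mid\vect X^{[k-1]})$ as a parameterised mean field Gibbs measure and feeding it into Theorem~\ref{thm:defective-t2} for the flat-convex Lipschitz transport inequality, using log-concavity for the displacement convex transport inequality, and closing the square integrability via the defective $\TTE2$ fluctuation bound plus Poincaré. Two points, one substantive, one minor, deserve attention.

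\textbf{Displacement-convex nonlinear log-Sobolev inequality.} Your route starts from the \L{}ojasiewicz-type bound $I(m\mathbin{|}\Pi[m])\geqslant 2\kappa_V\bigl(\mathcal F(m)-\mathcal F(m_*)\bigr)$ and the free-energy identity $\mathcal F(m)-\mathcal F(m_*)=H(m\mathbin{|}m_*)+\tfrac12\langle W,(m-m_*)^{\otimes 2}\rangle$, then absorbs the quadratic energy term into the entropy by $\lvert\langle W,(m-m_*)^{\otimes 2}\rangle\rvert\leqslant L_WW_2^2\leqslant (2L_W/\kappa_V)H$. As written this yields $2\rho\geqslant 2\kappa_V(1-L_W/\kappa_V)$, which is nonpositive whenever $L_W\geqslant\kappa_V$ --- a regime the proposition allows. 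The remark about ``optimising the resulting quadratic'' does not repair this: even the best weighted combination of your \L{}ojasiewicz bound with the complementary estimate $I(m\mathbin{|}\Pi[m])\geqslant\kappa_V^2W_2^2(m,m_*)$ gives at most $\rho=\kappa_V^2/(\kappa_V+L_W)$, i.e.\ a correction factor $1+L_W/\kappa_V$, not the stated $1+L_W^2/\kappa_V^2$. The paper takes a structurally different path that is valid for all $L_W$: it decomposes $\nabla\log(m/m_*)=\nabla\log(m/\Pi[m])+\nabla\log(\Pi[m]/\Pi[m_*])$ and applies $(a+b)^2\leqslant 2a^2+2b^2$, so that
\[
I(m\mathbin{|}m_*)\leqslant 2I(m\mathbin{|}\Pi[m])+2L_W^2W_2^2(m,m_*),
\]
then controls $W_2^2$ by $\kappa_V^{-2}I(m\mathbin{|}\Pi[m])$ from \cite{CMVKinetic}. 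This Fisher-information triangle inequality is what produces the $1+L_W^2/\kappa_V^2$ factor and keeps $\rho>0$ unconditionally.

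\textbf{Bounded-force concave energy bound.} Your estimate ``$\langle W_-,(m-m_*)^{\otimes 2}\rangle\leqslant 2M_W^-W_1(m,m_*)$'' drops a factor: since only $\nabla_1 W_-$ is bounded, the quadratic form is bounded by $M_W^-W_1(m,m_*)\cdot 2\lVert m-m_*\rVert_{\mathrm{TV}}$, and one needs \emph{both} the $\TTE1$ inequality on the $W_1$ factor and Pinsker on the total variation factor to land on $\tfrac{2M_W^-}{\sqrt{\rho_0}}H(m\mathbin{|}m_*)$. A bound that is linear in $W_1$ alone cannot be turned into a fraction of $H$ and would not produce the stated $\rho$.
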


\begin{rem}
When $m_*$ satisfies a log-Sobolev inequality,
the uniform log-Sobolev inequality for $\Pi[m]$
can often be established using perturbation lemmas.
For instance, when $W$ is bounded, this condition can be verified
via the Holley--Stroock method~\cite{HolleyStroockLSI}.
In the case of Lipschitz continuous forces, the result can be obtained
by applying the approach of Aida--Shigekawa~\cite{AidaShigekawaLSI}.
For a detailed discussion of this condition, we refer readers
to our previous work~\cite{ulpoc}.
\end{rem}

To demonstrate the full power of our theorems,
we show sharp chaos estimates for the well-known quartic Curie--Weiss model
of ferromagnetism throughout its sub-critical regime.
We select this model as a simple yet
representative example of systems exhibiting a second-order phase transition,
and this choice should not be read as a fundamental limitation of our results.
We expect that extending Theorem~\ref{thm:sharp}
to more general settings will yield analogous sharp chaos estimates for
mean field $\mathrm{O}(n)$ models,
as discussed in \textcite[Section~2, (iv)]{BauerschmidtBodineauSimple}.

\begin{prop}
\label{prop:curie-weiss}
Suppose that $\X = \R$ and the confinement potential is given by
\[
V(x) = \frac{\theta}{4} x^4 + \frac{\sigma}{2} x^2
\]
for $\theta > 0$ and $\sigma \in \R$.
Suppose that the interaction is given by
\[
W(x,y) = - J x y
\]
for $J \in (0, \Jc)$, where
\[
\Jc \coloneqq
\frac{\int_{\R} e^{-V(x)} \dd x}
{\int_{\R} x^2e^{-V(x)} \dd x}.
\]
Then Theorem~\ref{thm:sharp} applies with
$m_*$ being the centered measure
$\propto e^{-V(x)} \dd x$ and
\begin{align*}
\rho &= \biggl(1 - \frac{J}{\Jc}\biggr)^{\!2}\rho_0,\\
\gamma &= \frac{64(1+\delta_N)^2J^2}{\lambda_N},\\
M &= 4J^2 \biggl( \frac{\delta_N}{\lambda_NN} + \frac{d}{\rho_0}\biggr),
\end{align*}
where $\rho_0$, $\lambda_N$, $\delta_N$ are given by
\begin{align*}
\rho_0 &=
\begin{cases}
\sigma, & \sigma \geqslant 1, \\
\exp \bigl( - 7(1-\sigma)^2\!/36\theta \bigr),
& \sigma < 1,
\end{cases} \\
\lambda_N &= \biggl( 1 - \frac{J}{\Jc}\biggr)\frac{\rho_0}{2}
- \frac{3\bigl(1/\sqrt{\min(\Jc/J-1,1)}+3\bigr)J}{N}, \\
\delta_N &=
3\bigl(1/\sqrt{\min(\Jc/J-1,1)}+3\bigr)\frac{dJ}{\rho_0},
\end{align*}
whenever $N$ is large enough such that $\lambda_N > 0$.
\end{prop}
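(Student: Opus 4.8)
The plan is to verify the three hypotheses of Theorem~\ref{thm:sharp} for the displayed constants $\rho,\gamma,M$. Since $W(x,y)=-Jxy$ is odd, $\langle W(x,\cdot),m_*\rangle=-Jx\int_\R y\,m_*(\dd y)$ vanishes when $m_*$ is the \emph{centred} density $\propto e^{-V}$, so $\Pi[m_*]=m_*$ and $m_*$ is a mean field Gibbs measure (uniqueness will follow a posteriori from the non-linear log-Sobolev inequality below). More generally, for every $m\in\mathcal P_1(\R)$ the measure $\Pi[m]\propto\exp(-V(x)+Jx\int y\,m(\dd y))$ is a \emph{linear reweighting} of $m_*$, hence has the same potential Hessian $V''=3\theta x^2+\sigma$. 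This already gives the uniform log-Sobolev inequality: when $\sigma\geqslant1$ we have $V''\geqslant1$, so the Bakry--Émery criterion yields the $\rho_0$-LSI with $\rho_0=\sigma$ for $m_*$ and for all $\Pi[m]$; when $\sigma<1$ I would split $V=V_{\mathrm{cvx}}+g$ with $V_{\mathrm{cvx}}$ uniformly $1$-convex (absorbing the quartic term together with a full unit of curvature) and $g$ a bounded correction supported on the region $\{V''<1\}$, optimise the splitting so that $\Osc g\leqslant 7(1-\sigma)^2/36\theta$, and conclude by the Holley--Stroock perturbation lemma, again uniformly over the $\Pi[m]$. Since $\rho=(1-J/\Jc)^2\rho_0\leqslant\rho_0$, this also delivers the linear half $2\rho H(m|m_*)\leqslant I(m|m_*)$ of the first hypothesis.

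For the second and third hypotheses, note $W=-W_-$ with $W_-(x,y)=Jxy$ of positive type, $W_+=0$, $L_W^-=J$, $L_W^+=0$, and $\Jc=1/\Var m_*$ for the centred $m_*$. Since $\nabla_1W(x^k,y)=-Jy$, the functional $\langle\nabla_1W(x^k,\cdot),\nu-\mu\rangle$ is $-J$ times a difference of means, so by $1$-Lipschitzness the transport hypothesis reduces to a $\TTE1$ inequality for $m_*$ and for the conditional laws $m^{N,k|[k-1]}_{*,\x^{[k-1]}}$, and the square-integrability hypothesis to a second-moment bound. I would check the mean field hypothesis~\eqref{eq:mf-neg-t2} of Theorem~\ref{thm:defective-t2} with $\lambda=\tfrac12(1-J/\Jc)\rho_0$ and $\varepsilon=\min\{\tfrac12(\Jc/J-1),\tfrac12\}$: because $\langle W_-,(\nu-m_*)^{\otimes2}\rangle=J(\int x\,\nu(\dd x))^2\geqslant0$, \eqref{eq:mf-neg-t2} follows from the convex combination, with weight $\alpha=(1+\varepsilon)J/\Jc\leqslant\tfrac12(1+J/\Jc)$ on the second term, of the $\TTE2$ inequality $\tfrac{\rho_0}{2}W_2^2(\nu,m_*)\leqslant H(\nu|m_*)$ (valid since $m_*$ satisfies the $\rho_0$-LSI) and the Gaussian-type bound $\tfrac{\Jc}{2}(\int x\,\nu(\dd x))^2\leqslant H(\nu|m_*)$ supplied by the key lemma below. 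Theorem~\ref{thm:defective-t2} then yields \eqref{eq:ps-t1} with exactly the $\lambda_N,\delta_N$ of the statement, using $\Var m_*\leqslant d/\rho_0$ from the Poincaré inequality. Finally, the conditional law $m^{N,k|[k-1]}_{*,\x^{[k-1]}}$ is the $x^k$-marginal of the Gibbs measure on the $N-k+1$ particles $x^k,\dots,x^N$ with interaction $W$ still scaled by $1/N$ and confinement $V(x)-\tfrac JN x\sum_{l<k}x^l$ — a linear tilt of $V$, hence sharing its Hessian, its uniform LSI constant, and its key lemma — so Theorem~\ref{thm:defective-t2} applies once more and delivers the $\TTE1$ inequality, whence the transport and square-integrability hypotheses with $\gamma=64(1+\delta_N)^2J^2/\lambda_N$ and $M=4J^2(\delta_N/(\lambda_N N)+d/\rho_0)$, valid for $N$ large enough that $\lambda_N>0$. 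This part is precisely the Lipschitz-force argument of Proposition~\ref{prop:exm} specialised to $W_+=0$.

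The crux is the non-linear log-Sobolev inequality $2\rho H(m|m_*)\leqslant I(m|\Pi[m])$ with $\rho=(1-J/\Jc)^2\rho_0$, which must hold on the \emph{entire} subcritical range $J<\Jc$, whereas the generic flat-semi-convex estimate of Proposition~\ref{prop:exm} only reaches $J<\rho_0/2$. The decisive structural input is that $W$ is rank one — it couples the particles only through their mean — so the unstable direction in $\mathcal P(\R)$ is a shift of the mean, governed by the scalar log-moment generating function $\psi(t)=\log\int e^{tx}\,m_*(\dd x)$ and its Legendre transform $\psi^*$. I would first prove the key lemma
\[
\psi(t)\leqslant\frac{t^2}{2\Jc}\ \ (t\in\R),\qquad\text{equivalently}\qquad\psi^*(b)\geqslant\frac{\Jc}{2}b^2,
\]
i.e.\ that $m_*$ is sub-Gaussian with variance proxy equal to its true variance $1/\Jc$; this is where the quartic tail of $V$ enters, and I expect to obtain it from the fact that the variance of the tilted measure $\propto e^{-V+tx}$ is maximal at $t=0$ — equivalently that $t\mapsto\psi''(t)$ is concave, which reduces to the fourth cumulant of every such tilt being nonpositive, a feature of potentials dominated by a convex quartic. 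Writing $b=\int x\,m(\dd x)$ and $\Pi[m]=e^{-V+Jbx}/Z_m$, one has the exact identity $H(m|\Pi[m])=H(m|m_*)-Jb^2+\psi(Jb)$; combining the uniform $\rho_0$-LSI for $\Pi[m]$, this identity, and the Gibbs variational bound $H(m|m_*)\geqslant\psi^*(b)$, and using $1-\rho/\rho_0=(J/\Jc)(2-J/\Jc)$, the claim reduces to the scalar inequality
\[
\psi(Jb)-Jb^2+\frac J\Jc\Bigl(2-\frac J\Jc\Bigr)\psi^*(b)\geqslant0\qquad(b\in\R),
\]
whose second-order part at $b=0$ cancels identically, whose fourth-order coefficient there is a positive multiple of $-\kappa_4(m_*)$ (with $\kappa_4$ the fourth cumulant of $m_*$, negative for the quartic potential), and which is dominated at infinity by the quartic growth $\psi^*(b)\sim\tfrac\theta4b^4$; I would close the intermediate range by a monotonicity argument. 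The genuine difficulty of the whole proof lies exactly here — the global sub-Gaussian lemma together with the ensuing scalar inequality — since this is what upgrades the high-temperature constant $\rho_0-2J$ of Proposition~\ref{prop:exm} to the sharp $(1-J/\Jc)^2\rho_0$ that stays positive all the way to the phase transition $J=\Jc$; once these are in hand, everything else is obtained by feeding the constants into Theorems~\ref{thm:sharp}, \ref{thm:t1-tightening} and~\ref{thm:defective-t2}.
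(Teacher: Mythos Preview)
Your overall plan mirrors the paper's closely: the uniform $\rho_0$-LSI via Bakry--Émery/Holley--Stroock, the reduction of the non-linear LSI to a one-dimensional inequality in the mean $b=\int x\,m(\dd x)$, and the verification of \eqref{eq:mf-neg-t2} for the (parameterised) conditional systems via the sub-Gaussian bound $\tfrac{\Jc}{2}(p(\nu)-p(\mu))^2\leq H(\nu|\mu)$ are all exactly what the paper does. Your convex-combination derivation of \eqref{eq:mf-neg-t2} is in fact a bit slicker than the paper's variational computation, though both rest on the same sub-Gaussian estimate, which transfers to every tilt $\pi[h]$ precisely because $\psi''\leq 1/\Jc$ is a global bound.

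The genuine gap is the scalar inequality
\[
\psi(Jb)-Jb^2+\frac{J}{\Jc}\Bigl(2-\frac{J}{\Jc}\Bigr)\psi^*(b)\geq 0.
\]
Your heuristics (second-order cancellation, fourth-order sign, quartic asymptotics, ``close the intermediate range by a monotonicity argument'') do not constitute a proof, and the sub-Gaussian lemma $\psi''\leq 1/\Jc$ alone is \emph{not} enough: inserting only $\psi\geq 0$ and $\psi^*(b)\geq \Jc b^2/2$ produces a negative lower bound. The paper proves this inequality cleanly by showing its second derivative in $b$ is nonnegative, and that step uses that $\psi''$ is \emph{decreasing} on $[0,\infty)$ (equivalently $f'=J\psi''(J\,\cdot)$ is decreasing), not merely bounded by $1/\Jc$. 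This monotonicity is exactly the Griffiths--Hurst--Sherman inequality for the single-site $\phi^4$ measure, and it is the structural input you are missing. Your proposed route via ``$\psi''$ concave'' (fourth cumulant of every tilt nonpositive) would, if established, imply the needed monotonicity (concave $+$ even $\Rightarrow$ nonincreasing on $[0,\infty)$), but it is a stronger and non-standard claim; the correct and classical tool is GHS (sign of the third cumulant), which the paper invokes directly. Once GHS is in hand, both the sub-Gaussian lemma and the $\varphi''\geq 0$ argument follow immediately, and the remainder of your proposal goes through as written.
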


\begin{rem}
If $J=0$, the Curie--Weiss model is free of interaction,
and if $J<0$, it falls into the flat convex case with Lipschitz force
of Proposition~\ref{prop:exm}.
\end{rem}

\begin{rem}
\label{rem:curie-weiss-critical}
A quick computation for the Curie--Weiss model shows that, if
\[
N > \frac{12\bigl(1/\sqrt{\min(\Jc/J-1,1)}+3\bigr)J}
{(1-J/\Jc)\rho_0},
\]
then $\lambda_N \geqslant (1-J/\Jc)\rho_0/4$, and therefore for all $k \in [N]$,
\[
\frac{\rho_0}{2}W_2^2(m^{N,k}_*,m_*^{\otimes k})
\leqslant H(m^{N,k}_* | m_*^{\otimes k}) \leqslant C_J \frac{k^2}{N^2}
\]
for some $C_J > 0$ satisfying
\[
\liminf_{J \to \Jc-} \frac{\log C_J}{\log(1 - J/\Jc)} \geqslant -18.
\]
This bound on the critical exponent is highly sub-optimal.
For example, by not doing the second part of the last step
in the proof of Theorem~\ref{thm:sharp},
we immediately get bounds with exponent $-14$ for $k = O(1)$.
Moreover, by applying Theorem~\ref{thm:defective-t2}
with $\varepsilon = 0$ instead of $\varepsilon > 0$ in the third step
of the proof, we can further reduce the exponent to $-10$
though this introduces polylogarithmic factors in $N$ as a trade-off.
We do not elaborate on these arguments in detail,
as they are unlikely to yield the optimal exponent.

Meanwhile, Bauerschmidt, Bodineau and Dagallier establish
in a parallel study~\cite{BBDCriterionFreeEnergy}
that the log-Sobolev constants for the $N$-particle Gibbs measures
vanish in the order of $1 - J/\Jc$ when $J \to \Jc-$,
which is the expected behavior for Curie--Weiss models.
This leads to the global reversed entropy estimate
\[
H(m_*^{\otimes N} | m_*^N )
= O \biggl( \Bigl( 1 - \frac{J}{\Jc}\Bigr)^{\!-1}
I(m_*^{\otimes N} | m_*^N)\biggr)
= O \biggl( \Bigl( 1 - \frac{J}{\Jc}\Bigr)^{\!-1}\biggr),
\]
where we used the fact that $I(m_*^{\otimes N}|m_*^N) = O(1)$.
See the reversed entropy part in \textcite{LackerQuantitative} for details.
A local bound on the Wasserstein distance then follows
from its subadditivity combined with the $\TTE2$ inequality
for $m^N_*$:
\begin{multline*}
W_2^2(m^{N,k}_*, m_*^{\otimes k})
= O \biggl( \frac kN W_2^2(m^N_*, m_*^{\otimes N})\biggr)
= O \biggl( \Bigl( 1 - \frac{J}{\Jc}\Bigr)^{\!-1} \frac kN
H(m_*^{\otimes N} | m^N_*)\biggr) \\
= O \biggl( \Bigl( 1 - \frac{J}{\Jc}\Bigr)^{\!-2} \frac kN\biggr).
\end{multline*}

At this stage, interpolations between the exponents of
$1 - J/\Jc$ and $k/N$ can already be carried out.
However, the correct critical behaviors for the Curie--Weiss size of chaos
problem remain unclear to the authors
and it is even less certain
whether our method or Lacker's reversed entropy method can be improved
to prove the best exponents.
\end{rem}

\bigskip

The remainder of this section provides a detailed discussion of the flat
convex case and outlines potential future directions.

\subsection*{Flat convexities}

We now revisit the flat convex scenario discussed in Proposition~\ref{prop:exm}
and begin by providing two examples of flat convex potentials.

\begin{exm}[Two-layer neural network]
We consider the simplest case of a two-layer neural network, where both the
input and output are one-dimensional. The network consists of $N$ neurons,
the $i$th characterized by parameters $x^i = (a^i, b^i, w^i) \in \R^3$.
Its feature mapping, which defines the input-output relation, is expressed as:
\[
\R \ni z \mapsto \frac{1}{N} \sum_{i \in [N]}
a^i \sigma(w^i z + b^i) \in \R,
\]
where $\sigma \colon \R \to \R$ is a bounded non-linear activation function.
The $1/N$ factor ensures mean field scaling.

In regression tasks, a central problem in supervised machine learning,
the objective is to determine parameters $\x^{[N]} = (x^1, \ldots, x^N)$
that minimize the mean $L^2$ loss function
for a random input-output pair $(Z, Y)$:
\[
\Expect\biggl[\Bigl\lvert
Y - \frac{1}{N} \sum_{i \in [N]}
a^i \sigma(w^i Z + b^i)
\Bigr\rvert^2\biggr].
\]
Expanding the square reveals an equivalent form:
\[
\frac{1}{N^2} \sum_{i, j \in [N]}
\Expect\bigl[ \bigl(Y - a^i \sigma(w^i Z + b^i)\bigr)
\cdot \bigl(Y - a^j \sigma(w^j Z + b^j)\bigr)\bigr],
\]
which brings us to the two-body potential
\[
W(x, x') \coloneqq
\Expect\bigl[ \bigl(Y - a \sigma(w Z + b)\bigr)
\cdot \bigl(Y - a' \sigma(w' Z + b')\bigr)\bigr],
\]
where $x' = (a', w', b')$.
The potential $W$ satisfies the positivity property,
as any signed measure $\mu$ with $\int_{\R^3} \dd \mu = 0$ verifies
\[
\int_{(\R^3)^2} W(x, x') \mu^{\otimes 2}(\dd x \dd x')
= \Expect \biggl[ \Bigl( \int_{\R^3}
\bigl(Y - a \sigma(w Z + b) \bigr) \mu(\dd x) \Bigr)^{\!2} \biggr] \geqslant 0.
\]
To verify other conditions of Proposition~\ref{prop:exm},
it is generally necessary to assume that the activation function $\varphi$
is bounded and smooth,
and that the data distribution $\Law(Z,Y)$ admits suitable moment controls.
We refer readers to Section~3 of our previous work \cite{ulpoc} for details.

The study of the $N$-particle Gibbs measure $m^N_*$ is motivated
by its role as the long-time limiting distribution of the $N$ neurons
under the noised gradient descent algorithm.
In this context, the quantitative mean field limit towards $m_*$
provides an upper bound on the loss of the final training outcome
of a network consisting of $N$ neurons.
We observe that our main result can be applied
to achieve various improvements over recent works,
such as \cite[Theorem~3]{KZCELSampling}
and \cite[Proposition~4]{MonmarcheSchuhNonAsymptotic}.
For further details on the setup and numerical experiments,
we direct readers to \cite[Section~3]{ulpoc}
and related studies \cite{HRSS,NWSConvexMFL,ChizatMFL}.
\end{exm}

\begin{exm}[One-dimensional Coulomb gas]
Let $\X = \R$.
Consider the following interaction potential:
\[
W(x,y) = - \lvert x - y\rvert.
\]
It induces the force kernel
\[
\partial_1 W(x,y) = - \sgn (x - y).
\]
Notably, the force between two particles is repulsive
and does not depend on the distance between them.
The kernel is called the one-dimensional Coulomb potential
as it is the fundamental solution to the Laplace equation
\[
\partial_1^2 W(x,y) = - 2 \delta_y.
\]
This property implies the positivity of the Fourier transform of $W$
and thus, by a Bochner-type argument, $W$ is a flat convex kernel.

One technical issue remains:
the potential does not fit within the $\mathcal C^1$ framework
of this paper due to the discontinuity of $\nabla_1W$.
To address this, we propose the following regularization:
\[
W_\varepsilon(x,y) = - (\lvert \cdot\rvert\star\rho_{2\varepsilon}) (x-y),
\]
where $\rho_{2\varepsilon}$ is the centered Gaussian distribution
of variance $2\varepsilon$.
This regularization preserves the positivity,
as for all signed measure $\nu$ with zero net mass and finite second moments,
\[
\langle W_\varepsilon, \nu^{\otimes 2}\rangle
= \langle W, (\nu \star \rho_\varepsilon)^{\otimes 2}\rangle \geqslant 0.
\]

Finally, we provide some remarks on the Gibbs measure $m_*$.
At low temperatures,
corresponding to replacing $V$, $W$ by $\beta V$, $\beta W$
with $\beta$ large, $m_*$ is expected to converge to the
Frostman equilibrium, which minimizes the energy functional
\[
\nu \mapsto
\langle V, \nu \rangle + \frac{1}{2} \langle W, \nu^{\otimes 2} \rangle.
\]
The analytic characterization of this equilibrium
implies in the case of quadratic confinement
that it is a uniform distribution on an interval.
For further details, we refer to the lecture notes of Serfaty
\cite{SerfatyLecturesCoulombRiesz}.
\end{exm}

Having introduced the two examples of flat convexity, we now compare the
results of this paper with those obtained through global approaches. Assume
we are in the flat convexity scenario with Lipschitz force, meaning $W = W^+$
and consequently $L_W^- = 0$. Applying Theorem~\ref{thm:sharp} and letting
$\alpha = L^+_W / \rho_0$, we obtain
\[
H(m^{N,k}_* | m_*^{\otimes k})
\leqslant 72 \alpha^2\bigl(1 + 64\alpha^2(1+d\alpha)^2\bigr)^3
\biggl(1+\frac{\alpha}{N}\biggr)
\frac{k^2 + \bigl(384 \alpha^2(1+d\alpha)^2 + 1\bigr) k}{N^2}.
\]
For the global approach, we rely on a result from \textcite{KZCELSampling},
which refines some of the arguments
from our earlier collaboration with Chen \cite{ulpoc}. Theorem~5 in
\cite{KZCELSampling} implies that for $N \geqslant 80\alpha$,\footnote{
This theorem relies on \cite[Assumption~5]{KZCELSampling},
which imposes an additional $L^\infty$ bound on $\nabla_1^2 W$. However, upon
examining the proof, we observe that this bound is not used quantitatively.}
\[
H(m^{N,k}_* | m_*^{\otimes k})
\leqslant \frac{33\alpha d}{2}\frac{k}{N}.
\]
We also note a recent work by \textcite{NitandaImproved}, which focuses
on two-layer neural networks and establishes a global entropy bound without
relying on the log-Sobolev constant. Nitanda’s setting restricts the force
kernel to be bounded, providing an interesting comparison to the results in
the bounded force case of Proposition~\ref{prop:exm}.
However, we do not compare Nitanda's results in detail
as their formulation is highly model-specific,
whereas our aim is to provide a more general framework.

For Lipschitz forces, our approach yields better asymptotics
for $k = o(N)$ but introduces a 19th-power dependency
on the dimensionless parameter $\alpha$.
As noted in Remark~\ref{rem:curie-weiss-critical},
certain improvements are achievable by selectively applying the arguments
in the proof of Theorem~\ref{thm:sharp}.
However, even with these refinements, the linear
dependency on $\alpha$ present in the global result remains unattainable.
We believe that the strong dependency on $\alpha$ is a proof artifact, though
the optimal dependency in the sharp chaos case remains uncertain. As the main
goal of this paper is to provide a unified perspective for various gradient
interactions, we leave further optimization of this dependency to future work.

\subsection*{Perspectives}

An immediate question arises: can the comparison between conditional measures
be extended to establish time-uniform sharp chaos in the dynamical setting
by leveraging the non-linear log-Sobolev inequality? Another avenue of interest
is removing the sub-optimal regime identified by \textcite{LLFSharp}
for gradient dynamics. Our response is partially affirmative.
The conditional comparison yields at least time-uniform bounds
for regular H-stable potentials or divergence-free forces on the torus.
The full argument will be developed in a future work.
For more general cases, however, the answer remains unclear. A significant
challenge lies in identifying an appropriate free energy functional that aligns
with the conditional gradient descent. Additionally, establishing
time-uniform concentration for dynamical conditional laws is
considerably more complex than in the static setting.

Another open question is the derivation of sharp chaos bounds for equilibrium
measures of Coulomb particles in $d \geqslant 2$. Specifically, this involves
developing a localized version of the results by
\textcite{RougerieSerfatyHigher}. Intuitively, this requires identifying a local
variant of the modulated free energy, as entropy no longer adequately controls
the Coulomb energy, except in the high-temperature regime for $d = 2$.
Furthermore, our approach heavily relies on the dissipation of free energy.
For the Coulomb interaction, this mechanism was only recently understood
by \textcite{RosenzweigSerfatyMLSI}
in the one-dimensional case via a modulated log-Sobolev inequality.
Extending this knowledge to higher dimensions, however, remains challenging.

It goes without saying that the dynamical sharp chaos problem for singular
kernels combines the difficulties inherent in both of these aspects.

\section{Proof of Theorem~\ref{thm:sharp}}
\label{sec:proof-sharp}

The proof focuses on finding inequalities between conditional entropies
\[
H_k \coloneqq
\int_{\X^{k-1}}
H(m^{N,k|[k-1]}_{*, \x^{[k-1]}}| m_*)
m^{N,k-1}_*(\dd\x^{[k-1]}),\qquad k \in [N]
\]
and is split into five steps.
Specifically, in the first three steps we derive a bound
of $H_k$ in terms of $H_{k+1}$ for $k < N$.
The fourth step handles the boundary term $H_N$,
and the final step solves the resulting system of inequalities.

We will employ arguments similar to those in
\cite[Proof of Theorem~2.2]{LackerQuantitative}
to control the error terms arising from the particle approximation.
The key novelty of our method, as highlighted in the introduction,
lies in comparing neighboring conditional measures in Step~1,
which subsequently leads to higher-order entropy differences in Step~3.

\proofstep{Step 1: Inserting a conditional measure}
The explicit density of the Gibbs measure
\[
m^{N}_* \propto \exp \biggl( - \sum_{i \in [N]} V(x^i)
- \frac 1{2N} \sum_{i,j \in [N] } W(x^i, x^j) \biggr)
\]
implies that its marginal distribution $m^{N,k}_*$ satisfies
\begin{equation}
\label{eq:log-density}
\nabla_k \log m^{N,k}_*(\x^{[k]})
+ \nabla V(x^k)
+ \biggl\langle \nabla_1 W(x^k,\cdot),
\frac kN \mu_{\x^{[k]}}
+ \frac {N-k}{N} m^{N,k+1|[k]}_{*,\x^{[k]}} \biggr\rangle = 0.
\end{equation}
See \cite[Lemma~3.1]{LackerQuantitative} for details.
As observed in our previous work \cite[Proof of Theorem~2.6]{ulpoc},
the log-density $\log m^{N,k}_*$ admits, by definition, the decomposition
\[
\log m^{N,k}_*(\x^{[k]})
= \log m^{N,k|[k-1]}_{*,\x^{[k-1]}}(x^k)
+ \log m^{N,k-1}_*(\x^{[k-1]}).
\]
As the last term does not depend on $x^k$, we have
\[
\nabla_k\log m^{N,k}_*(\x^{[k]})
= \nabla_k \log m^{N,k|[k-1]}_{*,\x^{[k-1]}}(x^k).
\]
Now focus on the case $k < N$.
Adding $\langle \nabla_1 W(x^k,\cdot), m^{N,k|[k-1]}_{*,\x^{[k-1]}}\rangle$
to both sides of \eqref{eq:log-density} yields
\begin{equation}
\label{eq:insert-conditional}
\begin{IEEEeqnarraybox}[][c]{rCl}
\IEEEeqnarraymulticol{3}{l}{
\nabla_k \log m^{N,k|[k-1]}_{*, \x^{[k-1]}}(x^{k})
+ \nabla V(x^{k})
+ \langle \nabla_1 W(x^k,\cdot), m^{N,k|[k-1]}_{*,\x^{[k-1]}}\rangle} \\
\quad&=&
- \frac kN
\langle \nabla_1 W(x^k,\cdot), \mu_{\x^{[k]}}-m_*
\rangle \\
&&\adjustbin
+ \frac kN\langle \nabla_1 W(x^k,\cdot), m^{N,k+1|[k]}_{*,\x^{[k]}} - m_*
\rangle \\
&&\adjustbin -
\langle \nabla_1 W(x^k,\cdot), m^{N,k+1|[k]}_{*,\x^{[k]}}
- m^{N,k|[k-1]}_{*,\x^{[k-1]}}\rangle
\end{IEEEeqnarraybox}
\end{equation}
The left hand side is nothing but
\[
\nabla_k \log
\frac{m^{N,k|[k-1]}_{*, \x^{[k-1]}}(x^{k})}
{\Pi[m^{N,k|[k-1]}_{*, \x^{[k-1]}}](x^{k})},
\]
as the definition \eqref{eq:def-Pi} of the mapping $\Pi$ gives
\[
\nabla_k \log \Pi[m^{N,k|[k-1]}_{*, \x^{[k-1]}}](x^{k})
= - \nabla V(x^k) - \langle \nabla_1 W(x^k,\cdot),
m^{N,k|[k-1]}_{*, \x^{[k-1]}} \rangle.
\]
We take the square of the equality \eqref{eq:insert-conditional}
and integrate it against the probability measure $m^{N,k}_*$
to obtain
\begin{IEEEeqnarray*}{rCl}
\IEEEeqnarraymulticol{3}{l}
{\int_{\X^{k-1}}
I(m^{N,k|[k-1]}_{*, \x^{[k-1]}}| \Pi[m^{N,k|[k-1]}_{*, \x^{[k-1]}}])
m^{N,k-1}_*(\dd\x^{[k-1]})} \\
\quad&=&
\int_{\X^k}
\biggl\lvert
\nabla_k \log
\frac{m^{N,k|[k-1]}_{*, \x^{[k-1]}}(x^{k})}
{\Pi[m^{N,k|[k-1]}_{*, \x^{[k-1]}}](x^{k})}
\biggr\rvert^2 m^{N,k}_*(\dd\x^{[k]}) \\
&\leqslant&
\frac{3k^2}{N^2}
\int_{\X^k} \lvert \langle \nabla_1 W(x^k, \cdot),
\mu_{\x^{[k]}} - m_*\rangle\rvert^2 m^{N,k}_*(\dd\x^{[k]}) \\
&& \adjustbin + \frac{3k^2}{N^2}
\int_{\X^k} \lvert \langle \nabla_1 W(x^k, \cdot),
m^{N,k+1|[k]}_{*,\x^{[k]}}
- m_*\rangle\rvert^2 m^{N,k}_*(\dd\x^{[k]}) \\
&& \adjustbin + 3
\int_{\X^k} \lvert \langle \nabla_1 W(x^k, \cdot),
m^{N,k+1|[k]}_{*, \x^{[k]}}
-m^{N,k|[k-1]}_{*, \x^{[k-1]}}
\rangle\rvert^2 m^{N,k}_*(\dd\x^{[k]}) \\
&\eqqcolon& T_1+T_2+T_3,
\end{IEEEeqnarray*}
the inequality being Cauchy--Schwarz.
The non-linear log-Sobolev inequality
applied to the measure $m^{N,k|[k-1]}_{*,\x^{[k-1]}}$ gives
\[
2\rho H(m^{N,k|[k-1]}_{*, \x^{[k-1]}}| m_*)
\leqslant I(m^{N,k|[k-1]}_{*, \x^{[k-1]}}| \Pi[m^{N,k|[k-1]}_{*, \x^{[k-1]}}]).
\]
Therefore we have shown
\[
2 \rho H_k \leqslant T_1 + T_2 + T_3.
\]
We aim to find upper bounds for $T_1$, $T_2$ and $T_3$
in the following two steps.

\proofstep{Step 2: Bounding $T_1$}
By the Cauchy--Schwarz inequality,
\begin{multline*}
\int_{\X^k} \lvert \langle \nabla_1 W(x^k, \cdot),
\mu_{\x^{[k]}} - m_*\rangle\rvert^2 m^{N,k}_*(\dd\x^{[k]}) \\
\leqslant
\frac 1k\sum_{i\in [k]}\int_{\X^k}
\lvert \nabla_1 W(x^k, x^i) -
\langle\nabla_1 W(x^k,\cdot),m_*\rangle\rvert^2 m^{N,2}_*(\dd x^1\dd x^k)
\leqslant M.
\end{multline*}
The first crude bound follows:
\begin{equation}
\label{eq:t1-crude-bound}
T_1 \leqslant \frac{3Mk^2}{N^2}.
\end{equation}
For simplicity of notation,
we introduce the \emph{reduced interaction potential}
\begin{equation}
\label{eq:def-w-reduced}
W_*(x,y) \coloneqq W(x,y)
- \langle W(x,\cdot), m_*\rangle
- \langle W(\cdot,y), m_*\rangle
+ \langle W(\cdot,\cdot),m_*^{\otimes 2}\rangle.
\end{equation}
For the finer bound, observe that
\begin{IEEEeqnarray*}{rCl}
T_1 &=& \frac{3}{N^2}
\int_{\X^k} \Bigl\lvert
\sum_{i \in [k]}
\nabla_1 W_*(x^1,x^i)\Bigr\rvert^2
m^{N,k}_*(\dd\x^{[k]}) \\
&=& \frac{3(k-1)(k-2)}{N^2}
\int_{\X^3} \nabla_1 W_*(x^1,x^2) \cdot \nabla_1 W_*(x^1,x^3)
m^{N,3}_*(\dd x^1\dd x^2\dd x^3) \\
&&\adjustbin + \frac{3(k-1)}{N^2}
\int_{\X^2} \lvert\nabla_1 W_*(x^1,x^2)\rvert^2 m^{N,2}_*(\dd x^1\dd x^2) \\
&&\adjustbin + \frac{6(k-1)}{N^2}
\int_{\X^2}
\nabla_1 W_*(x^1,x^1)\cdot
\nabla_1 W_*(x^1,x^2) m^{N,2}_*(\dd x^1\dd x^2) \\
&&\adjustbin + \frac{3}{N^2}
\int_{\X}
\lvert\nabla_1 W_*(x^1,x^1)\rvert^2
m^{N,1}_*(\dd x^1)
\end{IEEEeqnarray*}
By Cauchy--Schwarz,
the sum of the last three terms can be bounded by
\[
\frac{3M(3k-2)}{N^2}.
\]
Conditioning on $x^1$ and $x^2$ in the first term yields
\begin{IEEEeqnarray*}{rCl}
\IEEEeqnarraymulticol{3}{l}
{\int_{\X^3} \nabla_1 W_*(x^1,x^2) \cdot \nabla_1 W_*(x^1,x^3)
m^{N,3}_*(\dd x^1\dd x^2\dd x^3)} \\
\quad&=& \int_{\X^2} \nabla_1 W_*(x^1,x^2)
\cdot \langle \nabla_1 W_*(x^1,\cdot), m^{N,3|[2]}_{*,\x^{[2]}}
\rangle m^{N,2}_* (\dd x^1\dd x^2) \\
&\leqslant&
\sqrt M
\biggl(
\int_{\X^2} \lvert
\langle \nabla_1 W_*(x^1), m^{N,3|[2]}_{*,\x^{[2]}}\rangle
\rvert^2 m^{N,2}_*(\dd x^1\dd x^2)
\biggr)^{\!1/2} \\
&\leqslant& \sqrt{\gamma M H_3},
\end{IEEEeqnarray*}
where the last line follows from the transport inequality.
We have thus obtained the finer bound:
\begin{equation}
\label{eq:t1-fine-bound}
T_1 \leqslant \frac{3}{N^2}
\bigl( (3k-2)M + (k-1)(k-2) \sqrt{\gamma MH_3} \bigr).
\end{equation}

\proofstep{Step 3: Bounding $T_2$ and $T_3$}
Applying the transport inequality for $m_*$ yields
\[
T_2 \leqslant \frac{3\gamma k^2}{N^2}
\int_{\X^k} H(m^{N,k+1|[k]}_{*,\x^{[k]}} | m_*) m^{N,k}_*(\dd\x^{[k]})
= \frac{3\gamma k^2}{N^2}H_{k+1}.
\]
Likewise, by the transport inequality for $m^{N,k|[k-1]}_{*,\x^{[k-1]}}$,
\begin{IEEEeqnarray*}{rCl}
T_3 &\leqslant& 3 \gamma
\int_{\X^k} H(m^{N,k+1|[k]}_{*,\x^{[k]}} |
m^{N,k|[k-1]}_{*,\x^{[k-1]}}) m^{N,k}_*(\dd\x^{[k]}) \\
&=&
3\gamma \int_{\X^k} \int_{\X}
\log \frac{m^{N,k+1|[k]}_{*,\x^{[k]}}(x^{k+1})}
{m^{N,k|[k-1]}_{*,\x^{[k-1]}}(x^{k+1})}
m^{N,k+1|[k]}_{*,\x^{[k]}}(\dd x^{k+1})
m^{N,k}_*(\dd\x^{[k]}) \\
&=& 3\gamma
\int_{\X^{k+1}}
\log \frac{m^{N,k+1|[k]}_{*,\x^{[k]}}(x^{k+1}) / m_*(x^{k+1})}
{m^{N,k|[k-1]}_{*,\x^{[k-1]}}(x^{k+1}) / m_*(x^{k+1})}
m^{N,k+1}_*(\dd\x^{[k+1]}) \\
&=& 3\gamma
\int_{\X^{k+1}}
\log \frac{m^{N,k+1|[k]}_{*,\x^{[k]}}(x^{k+1})}{m_*(x^{k+1})}
m^{N,k+1}_*(\dd\x^{[k]}) \\
&&\adjustbin- 3\gamma \int_{\X^{k}}
\log \frac{m^{N,k|[k-1]}_{*,\x^{[k-1]}}(x^{k+1})}{m_*(x^{k+1})}
m^{N,k}_*(\dd\x^{[k-1]}\dd x^{k+1})
\\
&=& 3\gamma (H_{k+1} - H_k),
\end{IEEEeqnarray*}
where $H_{k+1} - H_k$ is the higher-order entropy difference mentioned
in the beginning of the proof.
This chain of equalities implies in particular that
for $k < N$,
\[
H_{k+1} \geqslant H_k.
\]
Hence $T_2$ is further upper bounded by
$\frac{3\gamma k^2}{N^2}H_N$.

To summarize, we have established that for $k < N$,
\begin{equation}
\label{eq:hk}
2\rho H_k \leqslant T_1 + \frac{3\gamma k^2}{N^2} H_N
+ 3\gamma (H_{k+1}-H_k),
\end{equation}
where $T_1$ admits the crude upper bound \eqref{eq:t1-crude-bound}
and the refined \eqref{eq:t1-fine-bound}.
To complete the system of inequalities,
we study the boundary term $H_N$ in the next step.

\proofstep{Step 4: Bounding the boundary term $H_N$}
Take $k = N$ in \eqref{eq:log-density}
and compare it with the mean field stationary condition
\[
\nabla \log m_*(x)
+ \nabla V(x) + \langle \nabla_1W(x,\cdot), m_*\rangle = 0.
\]
This gives the following equality:
\[
\nabla_N \log \frac{m^{N,N|[N-1]}_{*,\x^{[N-1]}}(x^N)}
{m_*(x^N)}
+ \langle \nabla_1 W(x^N, \cdot), \mu_{\x^{[N]}} - m_*\rangle = 0.
\]
By the log-Sobolev inequality for $m_*$,
\begin{IEEEeqnarray*}{rCl}
2\rho H_N
&=& 2\rho \int_{\X^{N-1}}
H(m^{N,N|[N-1]}_{*,\x^{[N-1]}} | m_*) m^{N,N-1}_*(\dd\x^{[N-1]}) \\
&\leqslant&
\int_{\X^{N-1}} I(m^{N,N|[N-1]}_{*,\x^{[N-1]}} | m_*)
m^{N,N-1}_*(\dd\x^{[N-1]}) \\
&=&
\int_{\X^{N}}
\lvert\langle \nabla_1 W_*(x^N, \cdot), \mu_{\x^{[N]}}\rangle\rvert^2
m^{N}_*(\dd\x^{[N]}).
\end{IEEEeqnarray*}
The Cauchy--Schwarz inequality gives that
\begin{multline*}
\int_{\X^{N}}
\lvert\langle \nabla_1 W_*(x^N, \cdot), \mu_{\x^{[N]}}\rangle\rvert^2
m^{N}_*(\dd\x^{[N]}) \\
\leqslant
\frac 1N \sum_{i \in [N]}
\int_{\X^N}
\lvert\langle \nabla_1 W_*(x^N, x^i) \rangle\rvert^2
m^{N}_*(\dd\x^{[N]})
\leqslant M.
\end{multline*}
Hence the crude bound follows:
\begin{equation}
\label{eq:hn-crude-bound}
2\rho H_N \leqslant M.
\end{equation}
Alternatively, using the method for the finer bound \eqref{eq:t1-fine-bound}
in Step~2 gives
\begin{equation}
\label{eq:hn-fine-bound}
2\rho H_N \leqslant
\frac{1}{N^2} \bigl((3N-2) M + (N-1)(N-2) \sqrt{\gamma MH_3}\bigr).
\end{equation}

\proofstep{Step 5: Solving the system of inequalities}
Having established the upper bounds for $T_1$, $T_2$ and $T_3$
and the boundary $H_N$,
we are now ready to solve the system of inequalities
and prove the desired upper bounds for $H_k$.
This will be done by following two-step argument.
We first apply the crude bounds for $T_1$ and $H_N$
and solve the system of inequalities
to get the suboptimal $H_k = O(k^2\!/N^2)$.
Then we apply the finer bounds,
using the information obtained from the crude ones,
and show the sharp control $H_k = O(k/N^2)$.

Inserting the crude bounds
\eqref{eq:t1-crude-bound} and \eqref{eq:hn-crude-bound}
for $T_1$ and $H_N$ into \eqref{eq:hk} leads to
\[
2\rho H_k \leqslant \frac{3M\bigl(1+\frac\gamma{2\rho}\bigr)k^2}{N^2}
+ 3 \gamma (H_{k+1} - H_k)
\]
for $k < N$, the boundary satisfying $H_N \leqslant M/2\rho$.
Let us define
\[
\bar H_k = \frac{3M\bigl( 1 + \frac{\gamma}{2\rho}\bigr)}{\rho N^2}
\biggl(k^2+\frac{6\gamma}{\rho}k+\frac{18\gamma^2}{\rho^2}
+\frac{3\gamma}{\rho}\biggr).
\]
The construction ensures that
\[
2\rho \bar H_k \geqslant \frac{3M\bigl(1+\frac\gamma{2\rho}\bigr)k^2}{N^2}
+ 3 \gamma (\bar H_{k+1} - \bar H_k),
\]
as we can verify
\begin{align*}
\rho \bar H_k &\geqslant
\frac{3M \bigl(1+\frac\gamma{2\rho}\bigr)}{N^2}k^2,\\
\rho \bar H_k &\geqslant
\frac{9M\gamma\bigl( 1 + \frac\gamma{2\rho}\bigr)}{\rho N^2}
\biggl( 2k + \frac{6\gamma}{\rho} + 1\biggr)
= 3 \gamma (\bar H_{k+1} - \bar H_k).
\end{align*}
In the language of analysis,
$\bar H_k$ is an upper solution to the difference equation.
Moreover, on the boundary, $\bar H_N \geqslant H_N$.
Considering the system of inequalities satisfied
by the difference $H_k - \bar H_k$,
we see that $H_k \leqslant \bar H_k$ for all $k \in [N]$, namely,
\[
H_k \leqslant
\frac{3M\bigl( 1 + \frac{\gamma}{2\rho}\bigr)}{\rho N^2}
\biggl(k^2+\frac{6\gamma}{\rho}k+18\Bigl(\frac{\gamma}{\rho}\Bigr)^{\!2}
+3\frac\gamma\rho\biggr).
\]
In particular, taking $k=3$ gives
\[
H_3 \leqslant \frac{9M\bigl(1+\frac{\gamma}{\rho}\bigr)
\bigl(3+\frac{7\gamma}{\rho}+\frac{6\gamma^2}{\rho^2}\bigr)}
{\rho N^2}
\leqslant \frac{54M\bigl(1+\frac\gamma\rho\bigr)^3}{\rho N^2}.
\]

Now we insert the control on $H_3$ into the finer bounds
\eqref{eq:t1-fine-bound} and \eqref{eq:hn-fine-bound} to find
\begin{align*}
T_1 &\leqslant \frac{9Mk}{N^2} + \frac{9\sqrt{6}
\sqrt{\frac\gamma\rho\bigl(1+\frac\gamma\rho\bigr)^3}Mk^2}{N^3}
\leqslant \frac{36\bigl(1+\frac\gamma\rho\bigr)^2Mk}{N^2}, \\
2\rho H_N &\leqslant \frac{3M}{N} + \frac{3\sqrt{6}
\sqrt{\frac\gamma\rho\bigl(1+\frac\gamma\rho\bigr)^3}M}{N}
\leqslant \frac{12\bigl(1+\frac\gamma\rho\bigr)^2M}{N},
\end{align*}
the second inequalities on both lines being a consequence of
the following:
\[
\forall t \geqslant 0,\qquad
\frac{3 + 3\sqrt 6 \sqrt{t(1+t)^3}}{(1+t)^2}
\leqslant 3 + 3\sqrt 6 < 12.
\]
Applying the upper bound on $T_1$ to \eqref{eq:hk},
we obtain the system of inequalities
\begin{align*}
2\rho H_k
&\leqslant \frac{36\bigl(1+\frac\gamma\rho\bigr)^2Mk}{N^2}
+ \frac{3\gamma k^2}{N^2}
\cdot \frac{6\bigl(1+\frac\gamma\rho\bigr)^2M}{\rho N}
+ 3\gamma (H_{k+1} - H_k) \\
&\leqslant \frac{36\bigl(1+\frac\gamma\rho\bigr)^3Mk}{N^2}
+ 3\gamma (H_{k+1} - H_k).
\end{align*}
Let us define
\[
\bar H'_k = \frac{36\bigl(1+\frac\gamma\rho\bigr)^3M}{\rho N^2}
\biggl( k + \frac{3\gamma}{\rho}\biggr).
\]
We again verify that $\bar H'_k$ is an upper solution
to the system of inequalities above with $\bar H'_N \geqslant H_N$.
By the same comparison argument, we deduce that for all $k \in [N]$,
\[
H_k \leqslant \frac{36\bigl(1+\frac\gamma\rho\bigr)^3M}{\rho N^2}
\biggl( k + \frac{3\gamma}{\rho}\biggr),
\]
which is the first claim of the theorem.
The second claim follows by the chain rule:
\[
H(m^{N,k}_*|m_*^{\otimes k})
= H_1 + \cdots + H_k
\leqslant \frac{18\bigl(1+\frac\gamma\rho\bigr)^3M}{\rho N^2}
\biggl( k^2 + \Bigl(1+\frac{6\gamma}{\rho}\Bigr)k\biggr).
\tag*{\qed}
\]

\section{Proof of Theorem~\ref{thm:t1-tightening}}

Let $\varphi\colon\X \to \R$ be a $1$-Lipschitz function
such that $\langle\varphi,\mu\rangle = 0$.
By the argument of \textcite{BobkovGoetzeExponential},
the defective $\TTE1$ inequality \eqref{eq:defective-t1} implies that
for all $\lambda \in\R$,
\[
\log \langle e^{\lambda \varphi}, \mu\rangle
\leqslant \frac{\lambda^2}{2\rho} + \delta.
\]
Choosing $\varphi$ as mean-zero coordinate functions
shows that the probability measure $\mu$ has a Gaussian tail,
and hence satisfies a $\TTE1$ inequality
by \textcite{DGWTransportation}.
To quantify the $\TTE1$ constant, we apply the Bolley--Villani inequality
\cite[Theorem~2.1(ii)]{BolleyVillaniCKP}
to the function $\sqrt{\rho}\,\varphi / 2$.
This yields for all $\nu \in \mathcal P_1(\X)$,
\[
\frac{\rho}{4}\lvert\langle \varphi, \nu - \mu\rangle\rvert^2
\leqslant 2 \biggl( \frac 32 + \log \int_{\X}
e^{\rho\varphi^2\!/4} \dd\mu \biggr)^{\!2} H(\nu|\mu).
\]
It remains to control the Gaussian moment of $\varphi$,
which we achieve by the trick in \cite[p.~2704]{DGWTransportation}.
Let $\mathcal N$ be an standard Gaussian variable in $\R$.
We compute:
\begin{align*}
\int_{\X} e^{\rho\varphi^2\!/4} \dd\mu
= \int_{\X} \Expect [ e^{\sqrt{\rho/2}\,\varphi\mathcal N} ] \dd \mu
= \Expect \biggl[ \int_{\X} e^{\sqrt{\rho/2}\,\varphi\mathcal N} \dd \mu \biggr]
\leqslant \Expect[
e^{\mathcal N^2\!/4 + \delta}]
= \sqrt{2}\,e^{\delta}.
\end{align*}
Substituting this estimate into the Bolley--Villani gives
\[
\frac{\rho}{4}\lvert\langle \varphi, \nu - \mu\rangle\rvert^2
\leqslant 2 \biggl( \frac 32 + \frac 12\log 2 + \delta \biggr)^{\!2} H(\nu|\mu)
\leqslant 2 ( 2 + \delta )^{2} H(\nu|\mu).
\]
Taking the supremum over all $1$-Lipschitz $\varphi$
with $\langle\varphi,\mu\rangle=0$
completes the proof.\qed

\section{Proof of Theorem~\ref{thm:defective-t2} and Corollary~\ref{cor:jw}}

The proof is organized into five steps.
The initial four steps are dedicated
to establishing Theorem~\ref{thm:defective-t2},
with the second step providing a crucial lower bound
for a modulated free energy.
In the final step, we demonstrate that this key estimate directly leads to
Corollary~\ref{cor:jw}.

\proofstep{Step 1: Modulated free energy}
We recall from the work of Bresch, Jabin and Z.~Wang \cite{BJWAttractive}
the definition of \emph{modulated free energy}:
for $\nu^N \in \mathcal P_2(\mathbb R^d)$, we define
\begin{align*}
\mathcal F^N(\nu^N | m_*)
&\coloneqq \frac N2\Expect_{\bm{X} \sim \nu^N}
[\langle W, (\mu_{\bm{X}} - m_*)^{\otimes 2}\rangle]
+ H(\nu^N | m_*^{\otimes N}) \\
&= \frac N2\Expect_{\bm{X} \sim \nu^N}
[\langle W_*, \mu_{\bm{X}}^{\otimes 2}\rangle]
+ H(\nu^N | m_*^{\otimes N}),
\end{align*}
where $W_*$ denotes the reduced potential defined by \eqref{eq:def-w-reduced}.
Since the second argument in the modulated free energy
is the mean field Gibbs measure $m_*$,
by employing the fixed point relation \eqref{eq:m*-fixed-point},
we observe that it is precisely the difference between free energies:
\[
\mathcal F^N(\nu^N | m_*)
= \mathcal F^N(\nu^N) - N \mathcal F(m_*),
\]
where $\mathcal F^N$ are $\mathcal F$ are free energy functionals
defined at the beginning of the introduction.
Such free energy difference is the central quantity considered
in our previous work \cite{ulpoc}.
By definition of $m^N_*$, we have
\[
\mathcal F^N(\nu^N) - \mathcal F^N(m^N_*)
= H(\nu^N | m^N_*).
\]
Therefore, the relative entropy in the desired defective $\TTE2$ inequality
\eqref{eq:ps-defective-t2} can be expressed as
\begin{equation}
\label{eq:ps-relative-entropy-modulated-free-energy}
H(\nu^N | m^N_*)
= \mathcal F^N(\nu^N|m_*)
- \mathcal F^N(m^N_*|m_*).
\end{equation}
In the following two steps, we establish, respectively,
a lower bound and an upper bound for the two terms
in the right hand side of \eqref{eq:ps-relative-entropy-modulated-free-energy}.

\proofstep{Step 2: Lower bound of $\mathcal F^N(\nu^N|m_*)$}
This is the main step of the proof.
By the decomposition $W = W_+ - W_-$,
\begin{IEEEeqnarray*}{rCl}
\mathcal F^N(\nu^N) - N\mathcal F(m_*)
&=& \frac{N}{2} \Expect [ \langle W_*, \mu_{\bm{X}}^{\otimes 2}\rangle]
+ H(\nu^N | m_*^{\otimes N}) \\
&\geqslant&
- \frac{N}{2} \Expect [ \langle W_{-,*}, \mu_{\bm{X}}^{\otimes 2}\rangle]
+ H(\nu^N | m_*^{\otimes N}),
\end{IEEEeqnarray*}
where $W_{-,*}$ is the reduced potential defined by
\[
W_{-,*}(x,y)
\coloneqq W_-(x,y) - \langle W_-(\cdot, y), m_*\rangle
- \langle W_-(x,\cdot), m_*\rangle
+ \langle W_-(\cdot,\cdot), m_*^{\otimes 2}\rangle.
\]
Let $S_N$ be the permutation group on the $N$-element set.
The chain rule for relative entropy gives, for each $\sigma \in S_N$,
\[
H(\nu^N | m_*^{\otimes N})
= \sum_{k=1}^N
\Expect [ H( \nu^{N,\sigma(k) | \sigma([k-1])}_{\bm{X}^{\sigma([k-1])}} | m_*)],
\]
where $\bm{X}$ is a random variable distributed according to the law $\nu^N$
and $\nu^{N,\sigma(k) | \sigma([k-1])}_{\bm{X}^{\sigma([k-1])}}$
is the conditional measure defined by
\[
\nu^{N,\sigma(k) | \sigma([k-1])}_{\bm{X}^{\sigma([k-1])}}
= \Law(X^{\sigma(k)} | \bm{X}^{\sigma([k-1])})
= \Law(X^{\sigma(k)} | X^{\sigma(1)}, \ldots, X^{\sigma(k-1)}).
\]
Motivated by this chain rule structure,
we apply the non-linear $\TTE2$ inequality \eqref{eq:mf-neg-t2}
to each of the conditional measures
$\nu^{N,\sigma(k) | \sigma([k-1])}_{\bm{X}^{\sigma([k-1])}}$
and sum over $k \in [N]$, to obtain
\begin{multline*}
- \frac {(1+\varepsilon)}2 \sum_{k \in [N]}
\Expect [ \langle W_{-,*},
(\nu^{N,\sigma(k) | \sigma([k-1])}_{\bm{X}^{\sigma([k-1])}})^{\otimes 2}\rangle
] + H(\nu^N | m_*^{\otimes N}) \\
\geqslant \frac{\lambda}{2}
\sum_{k \in [N]}
\Expect[W_2^2(\nu^{N,\sigma(k) | \sigma([k-1])}_{\bm{X}^{\sigma([k-1])}}, m_*)].
\end{multline*}
The right hand side provides a transport plan
between $\nu^N$ and $m_*^{\otimes N}$,
and therefore dominates the $W_2$ distance:
\[
\sum_{k \in [N]}
\Expect[W_2^2(\nu^{N,\sigma(k) | \sigma([k-1])}_{\bm{X}^{\sigma([k-1])}}, m_*)]
\geqslant W_2^2(\nu^N, m_*^{\otimes N}).
\]
We refer readers to \cite[(22.9)]{VillaniOptimalTransport} for details.
It follows that
\begin{equation}
\label{eq:ps-neg-conditional-t2}
- \frac {(1+\varepsilon)}2 \sum_{k \in [N]}
\Expect [ \langle W_{-,*},
(\nu^{N,\sigma(k) | \sigma([k-1])}_{\bm{X}^{\sigma([k-1])}})^{\otimes 2}\rangle
] + H(\nu^N | m_*^{\otimes N})
\geqslant \frac{\lambda}{2} W_2^2(\nu,m_*^{\otimes N}).
\end{equation}
It remains to find a chain rule for the energy term on the left.
This is formulated in the following lemma.

\begin{lem}
\label{lem:conditional-energy}
Let $\alpha \in [0,1/2]$. Denote
\[
A = \frac 1N \sum_{k \in [N]}
\Expect [ W_{-,*}(X^i, X^i) ].
\]
Then it holds that
\begin{multline}
\label{eq:conditional-energy}
\frac 1{N!}\sum_{\sigma \in S_N}
\sum_{k \in \llbracket 2,N\rrbracket}
\Expect [\langle W_{-,*},
(\nu^{N,\sigma(k) | \sigma([k-1])}_{\bm{X}^{\sigma([k-1])}}
)^{\otimes 2}\rangle] \\
\geqslant
\begin{cases}
\bigl( \frac{2}{1 + \alpha} - \frac{1}{1 + 2\alpha} \bigr)
N \Expect[\langle W_{-,*}, \mu_{\bm{X}}^{\otimes 2}\rangle]
- \bigl(\frac{1}{2\alpha}+3\bigr) A, & \alpha \in (0,1/2], \\
N \Expect[\langle W_{-,*}, \mu_{\bm{X}}^{\otimes 2}\rangle]
- (\log N + 3) A, & \alpha = 0.
\end{cases}
\end{multline}
\end{lem}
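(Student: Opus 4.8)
The plan is to linearise the energy $\langle W_{-,*},\mu^{\otimes2}\rangle$ by a kernel trick and then reduce the statement to a scalar inequality for an exchangeable Hilbert‑valued sequence. Since $W_-$ is of positive type, for any finitely supported signed measure $\nu=\sum_ic_i\delta_{x_i}$ one has $\sum_{i,j}c_ic_jW_{-,*}(x_i,x_j)=\langle W_{-,*},\nu^{\otimes2}\rangle=\langle W_-,(\nu-(\sum_ic_i)m_*)^{\otimes2}\rangle\ge0$, because the correction terms in the definition \eqref{eq:def-w-reduced} of $W_{-,*}$ integrate to zero against the mass‑zero measure $\nu-(\sum_ic_i)m_*$. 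Hence $W_{-,*}$ is a symmetric positive semidefinite kernel, and the Moore--Aronszajn construction yields a Hilbert space $\mathcal H$ and a map $\phi\colon\X\to\mathcal H$ with $W_{-,*}(x,y)=\langle\phi(x),\phi(y)\rangle_{\mathcal H}$. Setting $Z^i:=\phi(X^i)$, which is square integrable as $\nu^N\in\mathcal P_2$ and $W_-$ grows at most quadratically, and $M_k^\sigma:=\Expect[Z^{\sigma(k)}\mid X^{\sigma([k-1])}]$, we get $\langle W_{-,*},(\nu^{N,\sigma(k)|\sigma([k-1])}_{\bm X^{\sigma([k-1])}})^{\otimes2}\rangle=|M_k^\sigma|^2$, $W_{-,*}(x,x)=|\phi(x)|^2$, and $\langle W_{-,*},\mu_{\bm X}^{\otimes2}\rangle=|\bar Z|^2$ with $\bar Z:=\frac1N\sum_iZ^i$. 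Thus \eqref{eq:conditional-energy} is equivalent to $\frac1{N!}\sum_\sigma\sum_{k=2}^N\Expect[|M_k^\sigma|^2]\ge c_\alpha N\Expect[|\bar Z|^2]-C_\alpha A$, where $A=\frac1N\sum_k\Expect[|Z^k|^2]$, $c_\alpha=\frac2{1+\alpha}-\frac1{1+2\alpha}$, $C_\alpha=\frac1{2\alpha}+3$ for $\alpha\in(0,1/2]$, and $c_0=1$, $C_0=\log N+3$; we may assume $A<\infty$.

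Next I would symmetrise. The $L^2$‑norm of a conditional expectation increases with the conditioning $\sigma$‑field, so $\Expect[|M_k^\sigma|^2]\ge\Expect[|\Expect[Z^{\sigma(k)}\mid Z^{\sigma([k-1])}]|^2]$. Letting $\tau$ be a uniform random permutation independent of $\bm X$ and putting $Y^j:=Z^{\tau(j)}$, the sequence $(Y^1,\dots,Y^N)$ is exchangeable, and averaging over $\sigma$ gives $\frac1{N!}\sum_\sigma\Expect[|\Expect[Z^{\sigma(k)}\mid Z^{\sigma([k-1])}]|^2]=\Expect[|\Expect[Y^k\mid Y^{[k-1]},\tau]|^2]\ge\Expect[|\Expect[Y^k\mid Y^{[k-1]}]|^2]=:\mu_k$, while the right‑hand side is unchanged since $\bar Z$ and $A$ are permutation invariant. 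So it suffices to prove, for an arbitrary square‑integrable exchangeable Hilbert‑valued sequence, that $\sum_{k=2}^N\mu_k\ge c_\alpha(v+(N-1)r)-C_\alpha v$, where $v:=\Expect[|Y^1|^2]$, $r:=\Expect[\langle Y^1,Y^2\rangle]$ and $N\Expect[|\bar Y|^2]=v+(N-1)r$. Writing $S_{k-1}:=\sum_{j<k}Y^j$ and $M_k:=\Expect[Y^k\mid Y^{[k-1]}]$, exchangeability yields the clean identities $\Expect[\langle S_{k-1},M_k\rangle]=\Expect[\langle S_{k-1},Y^k\rangle]=(k-1)r$ and $\Expect[|S_{k-1}|^2]=(k-1)v+(k-1)(k-2)r$, while Cauchy--Schwarz gives $|r|\le v$.

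If $r\le0$ the target is nonpositive, so assume $r>0$. Cauchy--Schwarz applied to $\Expect[\langle S_{k-1},M_k\rangle]$ gives $\mu_k\ge(k-1)^2r^2/\Expect[|S_{k-1}|^2]=(k-1)r^2/(v+(k-2)r)$; with $\tau:=(v-r)/r\ge0$, so that $v=(1+\tau)r$, this sums to $\sum_{k=2}^N\mu_k\ge(N-1)r-r\tau\sum_{j=1}^{N-1}\frac1{j+\tau}$, and after dividing by $r$ the claim reduces to the elementary inequality $\kappa(N-1)+D(1+\tau)\ge\tau\sum_{j=1}^{N-1}\frac1{j+\tau}$, with $\kappa:=1-c_\alpha=\frac{2\alpha^2}{(1+\alpha)(1+2\alpha)}$ and $D:=C_\alpha-c_\alpha=\frac1{2\alpha}+2+\kappa$ (resp.\ $\kappa=0$, $D=\log N+2$ when $\alpha=0$). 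For $\alpha=0$ this is immediate from $\sum_j\frac1{j+\tau}\le\sum_j\frac1j\le1+\log N$. For $\alpha\in(0,1/2]$ one bounds $\sum_{j=1}^{N-1}\frac1{j+\tau}\le\log(1+\tfrac{N-1}{\tau})$ and substitutes $s:=\tau/(N-1)$, reducing the claim to $s\log(1+1/s)\le\kappa+Ds$ for all $s>0$; since $s\mapsto s\log(1+1/s)$ is concave with derivative $\log(1+1/s)-\frac1{s+1}$, the function $s\log(1+1/s)-Ds$ attains its maximum at the unique $s^*$ solving $\log(1+1/s^*)-\frac1{s^*+1}=D$, where its value equals $\frac{s^*}{s^*+1}\le s^*\le(e^D-1)^{-1}$; hence the inequality holds once $e^D\ge1+\kappa^{-1}$, which in turn follows from $e^{1/(2\alpha)+2}\ge1+\tfrac{3}{2\alpha^2}$, valid on $(0,1/2]$.

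The main obstacle is precisely this last scalar inequality, where the particular shape of $c_\alpha$ is essential: $\kappa=1-c_\alpha$ vanishes quadratically in $\alpha$ whereas $C_\alpha-c_\alpha$ blows up only like $\frac1{2\alpha}$, so the polynomially small margin $\kappa(N-1)$ just suffices to absorb the logarithmic growth of $\sum_j\frac1{j+\tau}$ uniformly in $N$ and $\alpha$; a cruder constant, or a cruder estimate of the maximum of $s\log(1+1/s)-Ds$, already fails near $\alpha=1/2$. A minor but necessary point is to check that the successive coarsening of the conditioning---from $X^{\sigma([k-1])}$ to $Z^{\sigma([k-1])}$ to $Y^{[k-1]}$---only ever decreases the left‑hand side, so that the exchangeable inequality genuinely implies Lemma~\ref{lem:conditional-energy}.
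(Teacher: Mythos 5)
Your proof is correct, but it takes a genuinely different route from the paper's. The paper's argument is entirely elementary: it applies, for each permutation $\sigma$ and each $k$, the quadratic Cauchy--Schwarz estimate
\[
\langle W_{-,*}, \mu\otimes\nu\rangle
\leqslant \frac{c_k}{2}\langle W_{-,*},\mu^{\otimes 2}\rangle
+ \frac{1}{2c_k}\langle W_{-,*},\nu^{\otimes 2}\rangle
\]
with the \emph{explicit} parametric choice $c_k = ((k-1)/(N-1))^\alpha$, then averages over $\sigma\in S_N$ to collapse the cross terms into the two exchangeable statistics $A$ and $B$, and finally controls the resulting coefficient sums by comparison with integrals. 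Your proof first linearises the positive kernel via a Moore--Aronszajn feature map, reduces (by two successive coarsenings of the conditioning $\sigma$-field plus exchangeabilisation) to an inequality for an arbitrary exchangeable square-integrable Hilbert-valued sequence, and then applies the \emph{optimal} Cauchy--Schwarz bound $\mu_k\geqslant (k-1)^2 r^2/\Expect[|S_{k-1}|^2]$. In exchangeable variables your optimal bound is precisely what the paper would obtain by maximising over $c_k$; the paper's $c_k$ is an explicit near-optimiser chosen so that the resulting sums telescope into integrals, whereas you postpone the optimisation and absorb the remainder into the scalar inequality $s\log(1+1/s)\leqslant\kappa + Ds$. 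Both routes give the same constants. What your approach buys is a clean structural statement — the lemma is nothing but the energy part of the chain rule for Wasserstein-square applied to a positive kernel, packaged as a statement about exchangeable Hilbert-valued sequences — and it makes transparent why the bound must degrade as $\alpha\to 0$ (the log arises from the maximum of $s\log(1+1/s)$). What it costs is the RKHS machinery (Bochner measurability of $\phi(X)$, separability of the reproducing kernel Hilbert space, which your appeal to $\mathcal C^2$-regularity and quadratic growth covers but which the paper avoids entirely) and a somewhat delicate scalar inequality at the end, whereas the paper's integral comparison is more pedestrian. One small remark: the tower-property reductions from $X^{\sigma([k-1])}$ down to $Y^{[k-1]}$, which you flag as the "minor but necessary point," are indeed in the correct direction — each coarsening only decreases the $L^2$-norm of the conditional expectation, hence only weakens the claim to be proved — so the logic is sound.
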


\begin{proof}[Proof of Lemma~\ref{lem:conditional-energy}]
For $N = 1$, we have
\[
\Expect [ \langle W_{-,*}, \mu_{\bm{X}}^{\otimes 2}\rangle ]
= \Expect_{X^1\sim\nu^1}
[ \langle W_{-,*}, \delta_{X^1} \otimes \delta_{X^1} \rangle]
= A.
\]
The positivity of $W_{-,*}$ implies that $A \geqslant 0$.
The right hand side of the claim of the lemma is therefore negative.
The summation on the left being null,
the inequality \eqref{eq:conditional-energy} automatically holds.

In the following we focus on the case $N \geqslant 2$.
For simplicity of notation, denote
\begin{align*}
W_{i,j} &= W_{-,*}(X^i, X^j) \\
B &= \frac{1}{N(N-1)} \sum_{\substack{i,j \in [N] \\ i \neq j}}
\Expect [ W_{i,j} ].
\end{align*}
The energy part in the modulated free energy
$\mathcal F^N(\nu^N) - N\mathcal F^N(m_*)$ thus writes
\[
N \Expect [ \langle W_{-,*}, \mu_{\bm{X}}^{\otimes 2}\rangle]
= \frac 1N \sum_{i,j \in [N]} \Expect [W_{i,j}] = A + (N-1) B.
\]
The positivity of $W_{-}$ implies that
$A \geqslant 0$ and $A + (N-1)B \geqslant 0$.
Reordering the indices by a permutation $\sigma \in S_N$,
we obtain, for each $k \in \llbracket 2,N\rrbracket$,
\begin{IEEEeqnarray*}{rCl}
\sum_{i \in [k-1]} \Expect [W_{\sigma(i), \sigma(k)}]
&=& \Expect [ \langle W_{-,*},
(\delta_{X^{\sigma(1)}} + \cdots + \delta_{X^{\sigma(k-1)}})
\otimes \delta_{X^{\sigma(k)}} \rangle] \\
&=& \Expect [ \langle W_{-,*},
(\delta_{X^{\sigma(1)}} + \cdots + \delta_{X^{\sigma(k-1)}})
\otimes \nu^{N,\sigma(k) | \sigma([k-1])}_{\bm{X}^{\sigma([k-1])}} \rangle] \\
&\leqslant&
\frac{c_k}{2(k-1)}\Expect [\langle W_{-,*},
(\delta_{X^{\sigma(1)}} + \cdots + \delta_{X^{\sigma(k-1)}})^{\otimes 2}\rangle]
\\
&&\adjustbin+ \frac{k-1}{2c_k}
\Expect [\langle W_{-,*},
(\nu^{N,\sigma(k) | \sigma([k-1])}_{\bm{X}^{\sigma([k-1])}})^{\otimes 2}\rangle
],
\end{IEEEeqnarray*}
where the last Cauchy--Schwarz inequality relies on the positivity of
$W_{-,*}$ and
\[
c_k \coloneqq \biggl( \frac{k-1}{N-1} \biggr)^{\!\alpha}.
\]
Summing over $k \in \llbracket 2, N\rrbracket$ leads to
\begin{multline*}
\sum_{k \in \llbracket 2,N\rrbracket}
\Expect [\langle W_{-,*},
(\nu^{N,\sigma(k) | \sigma([k-1])}_{\bm{X}^{\sigma([k-1])}})^{\otimes 2}
\rangle] \\
\geqslant \sum_{k \in \llbracket 2,N\rrbracket}
\biggl(\frac{2c_k}{k-1} \sum_{i \in [k-1]}
\Expect [W_{\sigma(i), \sigma(k)}]
- \frac{c^2_k}{(k-1)^2} \sum_{i,j \in [k-1]}
\Expect [W_{\sigma(i), \sigma(j)}]\biggr).
\end{multline*}
Averaging the inequality over permutations, we find
\begin{IEEEeqnarray}{rCl}
\IEEEeqnarraymulticol{3}{l}
{\frac 1{N!}\sum_{\sigma \in S_N}
\sum_{k \in \llbracket 2,N\rrbracket}
\Expect [\langle W_{-,*},
(\nu^{N,\sigma(k) | \sigma([k-1])}_{\bm{X}^{\sigma([k-1])}})^{\otimes 2}
\rangle]}
\notag \\
\quad&\geqslant&
\frac 1{N!}\sum_{\sigma \in S_N}
\sum_{k \in \llbracket 2,N\rrbracket}
\biggl(\frac{2c_k}{k-1} \!\sum_{i \in [k-1]}\!
\Expect [W_{\sigma(i), \sigma(k)}]
- \frac{c^2_k}{(k-1)^2} \!\!\sum_{i,j \in [k-1]}\!\!
\Expect [W_{\sigma(i), \sigma(j)}]\biggr) \notag \\
\quad&=&
\sum_{k \in \llbracket 2,N\rrbracket}
\biggl(2c_k - \frac{(k-2)c_k^2}{k-1}\biggr) B
- \sum_{k \in \llbracket 2, N\rrbracket} \frac{c_k^2}{k-1}A
\label{eq:conditional-energy-last-step} \\
&=& \frac 1{N-1}\sum_{k \in \llbracket 2,N\rrbracket}
\biggl(2c_k - \frac{(k-2)c_k^2}{k-1}\biggr) \bigl(A + (N-1)B\bigr) \notag \\
&&\adjustbin- \sum_{k \in \llbracket 2, N\rrbracket}
\biggl(\frac{c_k^2}{k-1} + \frac{2c_k}{N-1}
- \frac{(k-2)c_k^2}{(N-1)(k-1)}\biggr)A. \notag
\end{IEEEeqnarray}
What is left is therefore to control the coefficients of $A + (N-1)B$ and $A$,
and we do it by considering the two cases $\alpha \in (0,1/2]$ and $\alpha = 0$
separately.

In the case $\alpha \in (0,1/2]$,
the coefficient of $A + (N-1)B$ satisfies
\begin{align*}
\frac 1{N-1}\sum_{k \in \llbracket 2, N\rrbracket}
\biggl(2c_k - \frac{(k-2)c_k^2}{k-1}\biggr)
&\geqslant
\frac 1{N-1}\sum_{k \in \llbracket 2, N\rrbracket}
(2c_k - c_k^2) \\
&\geqslant
\int_{0}^{1} (2x^{\alpha} -x^{2\alpha}) \dd x
= \frac 2{1+\alpha} - \frac{1}{1+2\alpha}
\end{align*}
and the coefficient of $A$ satisfies
\begin{IEEEeqnarray*}{rCl}
\IEEEeqnarraymulticol{3}{l}
{\sum_{k \in \llbracket 2, N\rrbracket}
\biggl(\frac{c_k^2}{k-1} + \frac{2c_k}{N-1}
- \frac{(k-2)c_k^2}{(N-1)(k-1)}\biggr)} \\
\quad&\leqslant&
\sum_{k \in \llbracket 2, N\rrbracket}
\biggl(\frac{c_k^2}{k-1} + \frac{2c_k}{N-1}\biggr)
\leqslant \frac{1}{(N-1)^{2\alpha}}
\Bigl( 1 + \sum_{k \in \llbracket 2, N-1\rrbracket} k^{2\alpha-1}\Bigr) + 2
\leqslant \frac{1}{2\alpha}+3.
\end{IEEEeqnarray*}
In the case $\alpha = 0$, the coefficient of $A+(N-1)B$ satisfies
\[
\frac 1{N-1}\sum_{k \in \llbracket 2, N\rrbracket}
\biggl(2c_k - \frac{(k-2)c_k^2}{k-1}\biggr)
\geqslant 1
\]
and the coefficient of $A$ satisfies
\[
\sum_{k \in \llbracket 2, N\rrbracket}
\biggl(\frac{c_k^2}{k-1} + \frac{2c_k}{N-1}
- \frac{(k-2)c_k^2}{(N-1)(k-1)}\biggr)
\leqslant \sum_{k \in \llbracket 2, N-1\rrbracket} \frac 1k
+ 3
\leqslant \log N + 3.
\]
Inserting the coefficient bounds
into \eqref{eq:conditional-energy-last-step}
proves the lemma.
\end{proof}

Having disposed of the proof of the lemma, we now return
to lower bounding the modulated free energy
\[
- \frac{N}{2} \Expect [\langle W_{-,*}, \mu_{\bm{X}}^{\otimes 2}\rangle]
+ H(\nu^N | m_*^{\otimes N}).
\]
For $\varepsilon \in (0,1/2]$,
we apply the lemma with $\alpha = \sqrt{\varepsilon/2}$
and take the average of \eqref{eq:ps-neg-conditional-t2}
over $\sigma \in S_N$ to obtain
\begin{multline*}
-\biggl(\frac{2}{1+\alpha} - \frac{1}{1+2\alpha}\biggr)
\frac{(1+\varepsilon)N}{2}
\Expect[\langle W_{-,*}, \mu_{\bm{X}}^{\otimes 2}\rangle]
+ H(\nu^N|m_*^{\otimes N}) \\
\geqslant \frac{\lambda}{2} W_2^2(\nu^N, m_*^{\otimes N})
- \frac 32 \biggl(\frac{1}{4\alpha}+\frac 32\biggr) A.
\end{multline*}
Our choice of $\alpha$ ensures that
\[
\biggl(\frac{2}{1+\alpha} - \frac{1}{1+2\alpha}\biggr) (1+\varepsilon)
= \frac{(1+3\alpha)(1+\varepsilon)}{1+3\alpha+2\alpha^2}
\geqslant \frac{1+\varepsilon}{1+2\alpha^2} = 1.
\]
It follows that
\[
- \frac{N}{2} \Expect[\langle W_{-,*}, \mu_{\bm{X}}^{\otimes 2}\rangle]
+ H(\nu^N|m_*^{\otimes N})
\geqslant \frac{\lambda}{2} W_2^2(\nu^N, m_*^{\otimes N})
- \frac 32 \biggl(\frac{1}{4\alpha}+\frac 32\biggr) A.
\]
For $\varepsilon = 0$, we apply the lemma with $\alpha = 0$
and similarly obtain
\[
-\frac{N}{2}
\Expect[\langle W_{-,*}, \mu_{\bm{X}}^{\otimes 2}\rangle]
+ H(\nu^N|m_*^{\otimes N})
\geqslant \frac{\lambda}{2} W_2^2(\nu^N, m_*^{\otimes N})
- \frac{\log N+3}{2}A.
\]

To complete the lower bound, it remains to control the diagonal term $A$.
Let $\bm{Y}$ be a random variable that is $W_2$-optimally coupled with
$\bm{X}$, i.e.,
\[
\sum_{k \in [N]} \Expect [ \lvert X^k - Y^k \rvert^2 ]
= W_2^2(\nu^N, m_*^{\otimes N}).
\]
Using the $L^\infty$ bound on $\nabla_{1,2}^2W_-$
and Cauchy--Schwarz, we obtain
\begin{IEEEeqnarray*}{rCl}
A &=& \frac 1N \sum_{k \in [N]} \Expect
[\langle W_-, (\delta_{X^k} - m_*)^{\otimes 2}\rangle] \\
&\leqslant& \frac {L_W^-}N \sum_{k \in [N]}
\Expect [W_2^2(\delta_{X^k}, m_*)] \\
&\leqslant& \frac{2L^-_W}{N} \sum_{k \in [N]}
\Expect[W_2^2(\delta_{X^k}, \delta_{Y^k}) + W_2^2(\delta_{Y^k}, m_*)] \\
&\leqslant& \frac{2L_W^-}{N} W_2^2(\nu^N, m_*^{\otimes N})
+ 4L_W^- \Var m_*.
\end{IEEEeqnarray*}

\proofstep{Step 3: Upper bound of
$\mathcal F^N(m^N_*|m_*)$}
Again let $\bm{Y}$ be distributed according to the law $m_*^{\otimes N}$.
Since $m^N_*$ minimizes the $N$-particle free energy $\mathcal F^N$,
\begin{IEEEeqnarray*}{rCl}
\mathcal F^N(m^N_* | m_*)
&=& \mathcal F^N(m^N_*) - N\mathcal F(m_*) \\
&\leqslant& \mathcal F^N(m_*^{\otimes N}) - N\mathcal F(m_*) \\
&=& \frac{N}{2}\Expect[ \langle W,
(\mu_{\bm{Y}} - m_*)^{\otimes 2}\rangle] \\
&\leqslant& \frac{N}{2}\Expect[ \langle W_+,
(\mu_{\bm{Y}} - m_*)^{\otimes 2}\rangle] \\
&=& \frac{1}{2N} \sum_{k \in [N]}
\Expect[ \langle W_+, (\delta_{Y^k} - m_*)^{\otimes 2}\rangle] \\
&\leqslant& L_W^+ \Var m_*.
\end{IEEEeqnarray*}
Combining this with the inequalities from the last step yields,
for $\varepsilon > 0$,
\begin{multline*}
H(\nu^N | m_*^N)
\geqslant \biggl( \frac{\lambda}{2}
- \frac{3(1/\sqrt{2\varepsilon}+3) L_W^-}{2N}
\biggr) W_2^2(\nu^N, m_*^{\otimes N}) \\
- \biggl(3\Bigl(\frac{1}{\sqrt{2\varepsilon}}+3\Bigr)L^-_W
+ L^+_W\biggr) \Var m_*;
\end{multline*}
and for $\varepsilon = 0$,
\begin{multline*}
H(\nu^N | m_*^N)
\geqslant
\biggl( \frac{\lambda}{2}
- \frac{(\log N + 3)L^-_W}{N} \biggr)
W_2^2(\nu^N, m_*^{\otimes N}) \\
- \bigl(2(\log N+3) L^-_W + L^+_W\bigr) \Var m_*,
\end{multline*}
proving the defective $\TTE2$ inequality \eqref{eq:ps-defective-t2}.

\proofstep{Step 4: Tightening and marginalization of $\TTE1$ inequality}
Having shown the defective $\TTE2$ inequality \eqref{eq:ps-defective-t2},
we proceed to the proof of the $\TTE1$ inequality \eqref{eq:ps-t1}.
Substituting $\nu^N = m^N_*$ into \eqref{eq:ps-defective-t2} gives
\[
W_2^2(m^N_*, m_*^{\otimes N}) \leqslant \frac{2\delta_N}{\lambda_N}.
\]
By Cauchy--Schwarz,
\[
W_2^2(\nu^N, m_*^{\otimes N}) \geqslant \frac 12 W_2^2(\nu^N, m^N_*)
- W_2^2(m^N_*, m_*^{\otimes N})
\geqslant \frac 12W_2^2(\nu^N, m^N_*)
- \frac{2\delta_N}{\lambda_N}.
\]
Hence from \eqref{eq:ps-defective-t2} we deduce that
\[
W_2^2(\nu^N,m^N_*) \leqslant \frac{4}{\lambda_N}
\bigl( H(\nu^N|m^N_*) + \delta_N\bigr) + \frac{4\delta_N}{\lambda_N}
= \frac{4}{\lambda_N}
\bigl( H(\nu^N|m^N_*) + 2\delta_N\bigr).
\]
Dominating the $W_1$ distance by $W_2$,
we obtain the defective $\TTE1$ inequality \eqref{eq:defective-t1}
for the $N$-particle Gibbs measure $m^N_*$.
Applying the tightening of Theorem~\ref{thm:t1-tightening}
yields the $\TTE1$ inequality \eqref{eq:ps-t1}.
Finally, for the $\TTE1$ inequalities of the marginal distributions,
it suffices to consider the equivalent formulation of
\textcite{BobkovGoetzeExponential}
(see also the beginning of the proof of
Theorem~\ref{thm:t1-tightening})
and take test functions that only depend on the corresponding coordinates.
This concludes the proof of the theorem for the general case.
The assertion for the special case of a flat convex $W$
follows by setting $W_- = 0$ with arbitrary $\varepsilon$.

\proofstep{Step 5: Large deviation estimate}
By the Donsker--Varadhan variational principle,
\begin{align*}
\MoveEqLeft
\log \int_{\R^{Nd}} \exp \biggl(
- \frac N2 \langle W, (\mu_{\vect x} - m_*)^{\otimes 2}\rangle\biggr)
m_*^{\otimes N}(\dd\vect x)\\
&= \sup_{\nu^N \in \mathcal P_2(\R^{Nd})}
\biggl(  - \frac N2 \Expect_{\vect X \sim \nu^N} [\langle W,
(\mu_{\vect X} - m_*)^{\otimes 2}] - H(\nu^N | m_*^{\otimes N}) \biggr) \\
&= - \inf_{\nu^N \in \mathcal P_2(\R^{Nd})}
\mathcal F^N(\nu^N | m_*)
\end{align*}
Combining the estimates in Step~2 for the case $\varepsilon \in (0,1/2]$,
we obtain
\begin{align*}
\mathcal F^N(\nu^N | m_*)
&\geqslant
- \frac N2 \Expect [\langle W_{-,*}, \mu_{\vect X}^{\otimes 2}\rangle]
+ H(\nu^N | m_*^{\otimes N}) \\
&\geqslant \frac{\lambda_N}{2} W_2^2(\nu^N, m_*^{\otimes N})
- 3 \biggl( \frac{1}{\sqrt{2\varepsilon}} + 3\biggr) L_W^- \Var m_*.
\end{align*}
Since $\lambda_N \geqslant 0$, this completes the proof of the corollary.
\qed

\section{Proof of Proposition~\ref{prop:exm}}

We prove the proposition by verifying
the conditions of Theorem~\ref{thm:sharp} in the three cases separately.

\proofstep{Case 1: Flat semi-convexity with bounded force}
The uniform log-Sobolev inequality for $\Pi[m]$
implies immediately that
\[
2\rho_0 H(m | m_*) \leqslant I(m|m_*).
\]
We proceed to show the non-linear log-Sobolev inequality.
Applying the uniform log-Sobolev for $\Pi[m]$ yields
\[
2\rho_0 H(m | \Pi[m]) \leqslant I(m | \Pi[m]).
\]
The left hand side satisfies
\begin{IEEEeqnarray*}{rCl}
H(m | \Pi[m])
&=& H(m | m_*) + \langle W, (m - m_*)^{\otimes 2}\rangle \\
&&\adjustbin + \log \int_{\X}
e^{-\langle W(x,\cdot), m - m_*\rangle} m_*(\dd x)
+ \int_{\X} \langle W(x,\cdot), m - m_*\rangle m_*(\dd x) \\
&\geqslant& H(m|m_*) - \langle W_-, (m - m_*)^{\otimes 2}\rangle,
\end{IEEEeqnarray*}
where the last inequality is due to the positivity of $W_+$
and the convexity of $t \mapsto e^{-t}$.
The energy term satisfies
\begin{IEEEeqnarray*}{rCl}
\langle W_-, (m - m_*)^{\otimes 2}\rangle
&=& \int_{\X}\biggl(\int_{\X}W_-(x,y) (m-m_*)(\dd x)\biggr) (m-m_*)(\dd y) \\
&\leqslant& \int_{\X} \sup_{x\in\X}\lVert \nabla_1W_-(x,y)\rVert_{L^\infty}
W_1(m,m_*) (m-m_*)(\dd y) \\
&\leqslant& M_W^- W_1(m,m_*) \sqrt{2H(m|m_*)} \\
&\leqslant& \frac{2M_W^-}{\sqrt{\rho_0}} H(m|m_*).
\end{IEEEeqnarray*}
The non-linear log-Sobolev follows:
\[
I(m|\Pi[m]) \geqslant 2\rho_0 \biggl( 1 - \frac{2M_W^-}{\sqrt{\rho_0}}\biggr)
H(m|m_*).
\]
The force being bounded in this case,
the transport inequality is always verified
with $\gamma = 2 M_W$ according to Pinsker
and the square integrability with $M = 4 M_W^2$.

\proofstep{Case 2: Flat semi-convexity with Lipschitz force}
For the linear and non-linear log-Sobolev inequalities
we argue as in the previous case.
The only difference is that we control the concave energy term by
\[
\langle W_-, (m - m_*)^{\otimes 2}\rangle
\leqslant L^-_W W_2^2(m, m_*)
\leqslant \frac{2L^-_W}{\rho_0} H(m | m_*).
\]
This leads to the non-linear log-Sobolev:
\[
I(m | \Pi[m]) \geqslant 2\rho_0
\biggl( 1 - \frac{2L^-_W}{\rho_0}\biggr) H(m | m_*).
\]

The $\TTE1$ inequality for $m_*$ being a consequence of
the $\rho_0$-log-Sobolev,
we proceed to show that a $\TTE1$ inequality holds
for $m^{N,k|[k-1]}_{*,\x^{[k-1]}}$ uniformly.
Our strategy is as follows.
First, we note that
$m^{N,k|[k-1]}_{*,\x^{[k-1]}} = \Law(X^k | \vect X^{[k-1]})$
is the $1$-marginal of the conditional law
\[
\Law (\vect{X}^{\llbracket k,N\rrbracket} |
\bm{X}^{[k-1]} = \x^{[k-1]})
\coloneqq \Law ( X^k, \ldots, X^N |
\bm{X}^{[k-1]} = \x^{[k-1]})
\]
and it suffices to establishes a $\TTE1$ inequality for this conditional law.
Next, we observe that this conditional law is a Gibbs measure
associated with an energy functional parametrized by the configuration
of the first $k-1$ particles.
Therefore, it remains only to verify the conditions
of Theorem~\ref{thm:defective-t2} uniformly
for these parametrized energy functionals.

Fix $k \in [N]$ and $\x^{[k-1]} \in \X^{k-1}$.
Consider the mean field energy functional $F_{\x^{k-1}}$
\[
F_{\x^{[k-1]}} (m)
= \langle V, m\rangle
+ \frac{k-1}{N} \langle W, \mu_{\x^{[k-1]}} \otimes m\rangle
+ \frac{(N-k+1)}{2N}
\langle W,m^{\otimes 2}\rangle
\]
parameterized by $\x^{[k-1]} \in \R^{(k-1)d}$.
Define the parameterized potentials
\begin{align*}
V_{\x^{[k-1]}}(y) &= V(y) + \frac 1N \sum_{i \in [k-1]} W(x^i, y), \\
W_{\x^{[k-1]}}(y,z) &= \frac{N-k+1}{N} W(y,z)
= \frac{N-k+1}{N} \bigl( W_+(y,z) - W_-(y,z)\bigr).
\end{align*}
Thus, the parameterized energy functional satisfies
\[
F_{\x^{[k-1]}} (m)
= \langle V_{\x^{[k-1]}}, m\rangle
+ \frac 12 \langle W_{\x^{[k-1]}}, m^{\otimes 2}\rangle,
\]
and falls within the scope of the paper,
whereas the original functional is given by
\[
F(m)
= \langle V, m\rangle
+ \frac 12 \langle W, m^{\otimes 2}\rangle.
\]
The construction of $F_{\x^{[k-1]}}$ ensures that
\[
N F(\mu_{\x^{[N]}}) -
(N-k+1) F_{\x^{[k-1]}}
(\mu_{\x^{\llbracket k, N\rrbracket}}),
\]
where $\x^{\llbracket k,N\rrbracket} \coloneqq (x^{k},\ldots,x^N)$,
is only a function of $\x^{[k-1]}$.
Consequently, the $(N-k+1)$-particle invariant measure
associated to $F_{\x^{[k-1]}}$,
being proportional to
\[
\exp\bigl(
- (N-k+1) F_{\x^{[k-1]}}
(\mu_{\x^{\llbracket k, N\rrbracket}})
\bigr) \dd x^k \cdots \dd x^N
\propto m^N_*(\x^{[N]}) \dd x^k \cdots \dd x^N
\]
is nothing but the conditional law
\[
m^{N,\llbracket k, N\rrbracket | [k-1]}_{*,\x^{[k-1]}}
\coloneqq \Law (\bm{X}^{\llbracket k,N\rrbracket} |
\bm{X}^{[k-1]} = \x^{[k-1]}),
\]
where $\bm{X} \coloneqq (X^1,\ldots,X^N)$ is distributed
according to $m^N_*$.
Therefore, once we verify conditions of Theorem~\ref{thm:defective-t2}
for the potentials $V_{\x^{[k-1]}}$, $W_{\x^{[k-1]}}$
uniformly in $k$ and $\x^{[k-1]}$,
the desired $\TTE1$ inequality for $m^{N,k|[k-1]}_{*,\x^{[k-1]}}$ follows
and the transport condition for Theorem~\ref{thm:sharp} is satisfied.
This is how we proceed in the following.

The mean field Gibbs measure $m_{*,\x^{[k-1]}}$ associated to
$F_{\x^{[k-1]}}$ should solve, by definition, the fixed point equation
\[
m_{*,\x^{[k-1]}}(\dd y) \propto \exp
\bigl( - V_{\x^{[k-1]}}(y)
- \langle W_{\x^{[k-1]}}(y,\cdot),m\rangle \bigr) \dd y.
\]
Substituting the definitions of $V_{\x^{[k-1]}}$ and $W_{\x^{[k-1]}}$,
we see that this is equivalent to the following:
\[
m_{*,\x^{[k-1]}} = \Pi\biggl[
\frac{k-1}{N} \mu_{\x^{[k-1]}} + \frac{N-k+1}{N}
m_{*,\x^{[k-1]}}\biggr].
\]
Hence, if the measure $m_{*,\x^{[k-1]}}$ exists,
it satisfies a $\rho_0$-log-Sobolev inequality by assumption.
To establish the existence of the measure,
we observe that it can be obtained as the minimizer of the following functional:
\[
\nu \mapsto
\frac{k-1}{N} \langle W_*, \mu_{\x^{[k-1]}} \otimes \nu\rangle
+ \frac{N-k+1}{2N} \langle W_*, \nu^{\otimes 2}\rangle
+ H(\nu|m_*),
\]
which, under our assumptions, is bounded from below, coercive
and lower semi-continuous with respect to the weak topology.
The log-Sobolev inequality for $m_{*,\x^{[k-1]}}$ implying
the $\TTE2$ for the same measure,
it follows that for all $\nu \in \mathcal P_2(\R^d)$,
\begin{multline*}
H(\nu | m_{*,\x^{[k-1]}})
- \frac {3(N-k+1)}{4N} \langle W_-, (\nu - m_{*,\x^{[k-1]}})^{\otimes 2}\rangle
\\
\geqslant
\frac{\rho_0}{2} W_2^2(\nu, m_{*,\x^{[k-1]}})
- \frac {3}{4} L^-_W W_2^2(\nu, m_{*,\x^{[k-1]}}),
\end{multline*}
verifying $\TTE2$ inequality \eqref{eq:mf-neg-t2} with
\[
\lambda = \rho_0 - \frac{3}{2} L^-_W,\qquad
\varepsilon = \frac 12.
\]
Applying Theorem~\ref{thm:defective-t2}
to the parameterized interaction $F_{\x^{[k-1]}}$
yields a $\TTE2$ inequality \eqref{eq:ps-t1}
for $m^{N,\llbracket k, N\rrbracket | [k-1]}_{*,\x^{[k-1]}}$
with the constants $\lambda_N$, $\delta_N$ as claimed in the proposition.
The measure $m^{N,k|[k-1]}_{*,\x^{[k-1]}}$ being
the $1$-marginal of $m^{N,\llbracket k, N\rrbracket | [k-1]}_{*,\x^{[k-1]}}$,
it also satisfies the $\TTE1$ inequality by the last claim of the theorem.
We have thus verified the transport condition with
\[
\gamma = \frac{64(1+\delta_N)^2L_W^2}{\lambda_N}.
\]

Finally, for the square integrability, note that both integrals in the condition
can be upper bounded by
\[
(L^+_W+L^-_W)^2 \Expect_{X^1\sim m^{N,1}_*}[W_2^2(\delta_{X^1}, m_*)],
\]
while the expectation satisfies
\[
\Expect_{X^1\sim m^{N,1}_*}[W_2^2(\delta_{X^1}, m_*)]
\leqslant \frac{2}{N} W_2^2(m^N_*, m_*^{\otimes N}) + 4 \Var m_*.
\]
by the end of Step~2 of the proof of Theorem~\ref{thm:defective-t2}.
Taking $k = 1$ in the argument of the previous paragraph
yields the defective $\TTE2$ inequality \eqref{eq:ps-defective-t2}
for $m^N_*$, and hence,
\[
W_2^2(m^N_*, m_*^{\otimes N}) \leqslant \frac{2\delta_N}{\lambda_N}.
\]
By Poincaré, $\Var m_* \leqslant d/\rho_0$.
We can choose the integrability constant as follows:
\[
M = 4(L^+_W+L^-_W)^2
\biggl( \frac{\delta_N}{N\lambda_N} + \frac{d}{\rho_0}\biggr).
\]

\proofstep{Case 3: Displacement convexity}
This is the case studied by \textcite{CMVKinetic}
where they show that the mean field free energy functional
is $\kappa_V$-convex along geodesics in $W_2$.
By Theorem~2.1 therein,
or alternatively by the general results from \cite{AGSGradientFlows},
\[
I(m|\Pi[m]) \geqslant \kappa_V^2 W_2^2(m, m_*).
\]
As $ - \nabla^2 \log m_* = \nabla^2V + \nabla^2W\star m_*
\succcurlyeq \kappa_V$,
the measure $m_*$ satisfies a $\kappa_V$-log-Sobolev inequality
by the Bakry--Émery criterion \cite{BakryEmeryHypercontractives}.
It follows that
\[
2\kappa_V H(m|m_*)
\leqslant I(m|m_*)
\leqslant 2I(m | \Pi[m])
+ 2\int_{X} \biggl| \nabla \log \frac{\Pi[m]}{\Pi[m_*]}\biggr|^2
\dd m,
\]
where the last term satisfies
\[
\biggl| \nabla \log \frac{\Pi[m]}{\Pi[m_*]}\biggr|^2
= \lvert\langle \nabla W \star (m - m_*) \rangle\rvert^2
\leqslant L_W^2 W_2^2(m, m_*).
\]
The non-linear log-Sobolev inequality is thus obtained:
\[
2\kappa_V H(m|m_*)
\leqslant 2I(m|\Pi[m])
+ 2L_W^2 W_2^2(m,m_*)
\leqslant 2\biggl(1 + \frac{L_W^2}{\kappa_V^2}\biggr)
I(m | \Pi[m]).
\]

For the transport inequality, we argue as in
\cite[Proof of Corollary~2.10]{LackerQuantitative}.
By direct computations, the Gibbs measure $m^N_*$ is $\kappa_V$-log-concave
(see e.g.\ \textcite{MalrieuLSI}).
The strong log-concavity being stable under marginalization and conditioning,
the conditional measure $m^{N,k|[k-1]}_{*,\x^{[k-1]}}$
is also $\kappa_V$-log-concave.
It follows that the transport inequality holds with
\[
\gamma = \frac{2L_W^2}{\kappa_V}.
\]

Finally, using the log-Sobolev inequality for $m^N_*$, we find
\begin{align*}
\kappa_V^2 W_2^2(m^N_*, m_*^{\otimes N})
&\leqslant I(m_*^{\otimes N} | m_*^N) \\
&= \frac{1}{N^2} \sum_{i \in [N]}
\Expect_{\bm{Y} \sim m_*^{\otimes N}}
\Bigl[\Bigl|\sum_{j \in [N]}\nabla_1 W_*(Y^i, Y^j)\Bigr|^2\Bigr] \\
&= \frac 1N \sum_{i \in [N]}
\Expect [\lvert \nabla_1 W_*(Y^1,Y^i)\rvert^2] \\
&\leqslant L_W^2 \Expect[W_2^2(\delta_{Y^1}, m_*)]
\leqslant \frac{2L_W^2d}{\kappa_V},
\end{align*}
where we recall that $W_*$ is the reduced potential
defined by \eqref{eq:def-w-reduced}.
It follows again by the Cauchy--Schwarz argument used in the previous case that
\[
\int_{\X} W_2^2(\delta_{x^1}, m_*) m^{N,1}_*(\dd x)
\leqslant \frac{2}{N} W_2^2(m^N_*, m_*^{\otimes N})
+ 4 \Var m_*
\leqslant \frac{4L_W^2d}{\kappa_V^3N} + \frac{4d}{\kappa_V}.
\]
The integrability condition is satisfied with
\[
M = 4L_W^2 \biggl( 1 + \frac{L_W^2}{\kappa_V^2N} \biggr) \frac{d}{\kappa_V}.
\tag*{\qed}
\]

\section{Proof of Proposition~\ref{prop:curie-weiss}}

The proof will be divided into three steps.

\proofstep{Step 1: Properties of the mapping $\Pi$}
In this part we develop some elementary results on
the mapping $\Pi$ associated to the mean field energy
\[
F(m) = \int_{\R} V(x) m(\dd x)
- \frac{J}{2} \biggl( \int_{\R} x m(\dd x)\biggr)^{\!2}.
\]
Let us define
\begin{align*}
p(m) &= \int_{\R} x m(\dd x), \\
\pi[h](\dd x) &= \frac{e^{Jhx-V(x)}\dd x}
{\int_{\R} e^{Jhy-V(y)} \dd y}.
\end{align*}
The mapping $\Pi$ decomposes as follows:
\[
\Pi[m] = \pi[p(m)].
\]
Moreover, the fixed point relation
\[
m = \Pi[m]
\]
holds if and only if $m = \pi[h]$ for some $h \in \R$ solving
\[
h = p (\pi[h]).
\]
Denote by $f$ the composed function $p \circ \pi \colon \R \to \R$,
which writes
\[
f(h) = \frac{\int_{\R} x e^{Jhx-V(x)} \dd x}
{\int_{\R} e^{Jhx-V(x)} \dd x}.
\]
It follows immediately that $f$ is odd.
The derivative writes
\[
f'(h) = J \frac{\int_{\R} x^2 e^{Jhx-V(x)} \dd x}
{\int_{\R} e^{Jhx-V(x)} \dd x}
- J \biggl(\frac{\int_{\R} x e^{Jhx-V(x)} \dd x}
{\int_{\R} e^{Jhx-V(x)} \dd x}\biggr)^{\!2},
\]
which is strictly positive everywhere.
Hence $f$ is strictly increasing.
As $f$ satisfies $f(+\infty) = +\infty$ and $f(-\infty)=-\infty$,
it defines a $\mathcal C^1$ bijection
in $\R$ with a $\mathcal C^1$ inverse, denoted by $f^{-1}$.
Taking $h=0$ in the expression for $f'(h)$ gives
\[
f'(0) = J \frac{\int_{\R} x^2 e^{Jhx-V(x)} \dd x}
{\int_{\R} e^{Jhx-V(x)} \dd x} = \frac{J}{\Jc} < 1
\]
In addition, the Griffiths--Hurst--Sherman inequality implies that
\[
f''(h) \begin{cases}
\leqslant 0, & h > 0, \\
\geqslant 0, & h < 0.
\end{cases}
\]
See e.g.\ Corollary~4.3.4 and the remark that follows
in the Glimm--Jaffe book \cite{GlimmJaffeQuantum}.
We therefore have $f'(h) \leqslant f'(0) < 1$ for all $h \in \R$.
According to the contraction mapping theorem,
the equation
\[
h = f(h)
\]
admits the unique solution $h = 0$.
Equivalently, the centered measure
\[
m_*(\dd x) = \pi[0](\dd x) = \frac{e^{-V(x)} \dd x}
{\int_{\R} e^{-V(y)}\dd y}
\]
is the only fixed point of $\Pi$.

\proofstep{Step 2: Log-Sobolev inequality}
We first show that images of the mapping $\Pi$
admit a uniform log-Sobolev inequality.
By the previous step, it is equivalent to show a uniform log-Sobolev
for $\pi[h]$, whose log-density writes
\[
- \log \pi[h] (x) = V(x) + Jhx.
\]
Note that $(Jhx)'' = 0$ and
\[
V''(x) \geqslant 3\theta x^2 + \sigma.
\]
For $\sigma \geqslant 1$,
Bakry--Émery \cite{BakryEmeryHypercontractives} implies
a $\sigma$-log-Sobolev inequality for $\pi[h]$.
For $\sigma < 1$, we define
\[
x_0 = \sqrt{\frac{(1-\sigma)^2}{3\theta}}.
\]
Observe that for $\lvert x \rvert \geqslant x_0$,
we have $V''(x) \geqslant 1$.
Construct the decomposition for the confinement:
\[
V(x) = V(x) \1_{\lvert x\rvert \geqslant x_0}
+ \biggl( \frac{x^2}{2} - \frac{x_0^2}{2} + V(x_0)
\biggr)\1_{\lvert x\rvert < x_0} + V_0(x),
\]
the $V_0$ part being supported on $B(0,x_0)$ with bounded oscillation:
\[
\Osc V_0 \leqslant \frac{\theta}{4}x_0^4 + \frac{(1-\sigma)}{2}x_0^2
\leqslant \frac{7(1-\sigma)^2}{36\theta}.
\]
By the Bakry--Émery criterion
and the Holley--Stroock perturbation lemma \cite{HolleyStroockLSI},
$\pi[h]$ satisfies a log-Sobolev inequality with the following constant:
\[
\exp \biggl( - \frac{7(1-\sigma)^2}{36\theta} \biggr).
\]
Hence it satisfies a $\rho_0$-log-Sobolev inequality
as stated in the proposition.
In particular, $m_*$ satisfies the same $\rho_0$-inequality.

Now we proceed to verify the non-linear log-Sobolev inequality.
By the $\rho_0$-log-Sobolev for $\Pi[m]$,
\[
I(m | \Pi[m]) \geqslant 2\rho_0 H(m | \Pi[m]).
\]
Denote $h = p(m) = \int_{\R} x m(\dd x)$.
The relative entropy on the right writes
\begin{align*}
H(m | \Pi[m])
&= H(m | m_*) + \int_{\R} \log \frac{m_*}{\Pi[m]} \dd m \\
&= H(m | m_*) - Jh^2
+ \log \int_{\R} e^{Jhx - V(x)} \dd x
- \log \int_{\R} e^{-V(x)} \dd x.
\end{align*}
As $p(m) = h$, the following automatically holds:
\[
H(m | m_*) \geqslant \inf_{\substack{ \nu\in \mathcal P_1(\R)\\p(\nu)=h}}
H(\nu|m_*).
\]
The infimum of the optimization problem
is attained, thanks to calculus of variations,
by some measure $\nu$ whose density satisfies
\[
- \log \nu(x) + V(x) - J\ell x = \text{constant},
\]
$\ell \in \R$ being the Lagrange multiplier.
Namely $\nu = \pi[\ell]$ with $p(\nu) = h$.
It follows that $f(\ell) = h$ and $\ell = f^{-1}(h)$.
Thus,
\[
H(m | m_*)
\geqslant H(\pi[\ell] | m_*)
= J\ell h - \log \int_{\R} e^{J\ell x - V(x)} \dd x
+ \log \int_{\R} e^{-V(x)} \dd x.
\]
Let $\varepsilon \in (0,1)$.
Observe that
\begin{IEEEeqnarray*}{rCl}
\IEEEeqnarraymulticol{3}{l}
{(1-\varepsilon) H(m|m_*) - Jh^2
+ \log \int_{\R} e^{Jhx - V(x)} \dd x
- \log \int_{\R} e^{-V(x)} \dd x} \\
\quad&\geqslant&
(1-\varepsilon)J\ell h
- (1-\varepsilon) \log \int_{\R} e^{J\ell x - V(x)} \dd x
- Jh^2
+ \log \int_{\R} e^{Jhx - V(x)} \dd x \\
&&\adjustbin- \varepsilon \log \int_{\R} e^{-V(x)} \dd x \\
&\eqqcolon& \varphi(h).
\end{IEEEeqnarray*}
The function $\varphi$ is even with $\varphi(0) = 0$ and has derivative
\[
\varphi'(h) =
J\bigl((1-\varepsilon) \ell
- 2h
+ f(h)\bigr),
\]
where we used the fact that $\ell' = 1/f'(\ell)$.
The second-order derivative reads
\[
\varphi''(h)
= J \biggl( \frac{1-\varepsilon}{f'(\ell)}
+ f'(h) - 2\biggr)
\]
We wish to show that $\varphi$ is positive everywhere
for some $\varepsilon$ small enough.
The function $\varphi$ being even,
we investigate it only on the positive half-line.
As $f' < 1$ and $f'' \leqslant 0$,
the equality $f(\ell) = h$ forces $\ell > h$,
and consequently $f'(\ell) \leqslant f'(h)$.
Hence $\varphi''$ satisfies
\[
\varphi''(h) \geqslant
J \biggl( \frac{1-\varepsilon}{f'(h)} + f'(h) - 2\biggr).
\]
Now take $\varepsilon = (1 - J/\Jc)^2$.
Note that the function
\[
t \mapsto \frac{1 - (1-J/\Jc)^2}{t} + t
\]
is decreasing for $t \in (0,J/\Jc]$.
Since $f'(h) \in (0, J/\Jc]$,
\[
\frac{1-(1-J/\Jc)^2}{f'(h)} + f'(h)
\geqslant \frac{1 - (1-J/\Jc)^2}{J/\Jc} + \frac{J}{\Jc} = 2.
\]
Therefore $\varphi''(h) \geqslant 0$ for all $h > 0$,
implying the positivity of $\varphi$.
In consequence,
\[
I(m | \Pi[m]) \geqslant 2\rho_0 H(m|\Pi[m])
\geqslant 2\rho_0\bigl(\varepsilon H(m|m_*) + \varphi(h)\bigr)
\geqslant 2 \rho_0 \varepsilon H(m|m_*).
\]
The non-linear log-Sobolev thus holds with
\[
\rho = \biggl(1 - \frac{J}{\Jc}\biggr)^{\!2}\rho_0.
\]

\proofstep{Step 3: Concentration of measure}
In this step we follow the strategy
in the second case of Proposition~\ref{prop:exm}.
Specifically, we consider a parameterized mean field interaction
and show the concentration for the corresponding particle
Gibbs measures by verifying the conditions of Theorem~\ref{thm:defective-t2}.

Let $\alpha \in [0,1]$ and $m_0 \in \mathcal P_1(\R)$.
Define the energy functional
\[
F_{\alpha,m_0}(m) = \langle V, m\rangle
- \alpha J p(m_0) p(m)
- \frac{(1-\alpha)J}{2} p(m)^2
\]
parameterized by $\alpha$, $m_0$,
where, as we recall, $p(m) = \int_{\R} x m(\dd x)$.
The local equilibrium mapping associated to
$F_{\alpha,m_0}$ reads
\[
\Pi_{\alpha,m_0} [m] = \Pi [\alpha m_0 + (1-\alpha) m].
\]
Denote $h_0 = p(m_0)$.
The fixed point relation
\[
\Pi_{\alpha,m_0} [m_{*,\alpha,m_0}] = m_{*,\alpha,m_0}
\]
is thus equivalent to $m_{*,\alpha,m_0} = \pi [h_{*,\alpha,m_0}]$
for some $h_{*,\alpha,m_0} \in \R$ solving
\[
h_{*,\alpha,m_0} = f \bigl( \alpha h_0 + (1-\alpha) h_{*,\alpha,m_0}\bigr).
\]
As $\lVert f' \rVert_{L^\infty} \leqslant J/\Jc < 1$,
the equation admits a unique solution by the contraction mapping theorem.

We proceed to show the non-linear Talagrand inequality
\eqref{eq:mf-neg-t2} for the measure $m_{*,\alpha,m_0}$ with the interaction
\[
(x,y) \mapsto -(1-\alpha)Jxy.
\]
By calculus of variations, the infimum of the mapping
\[
\nu \mapsto - \frac{\Jc}{2}
\bigl(p(\nu) - p(m_{*,\alpha,m_0})\bigr)^2 + H(\nu | m_{*,\alpha,m_0})
\]
is attained by some $\nu$ whose density solves
\[
- \log \nu(x) + V(x) - J\ell x = \text{constant}
\]
for some $\ell \in \R$.
In other words, $\nu = \pi[\ell]$ and in this case,
\begin{multline*}
- \frac{\Jc}{2}
\bigl(p(\nu)-p(m_{*,\alpha,m_0})\bigr)^2 + H(\nu | m_{*,\alpha,m_0}) \\
= - \frac{\Jc}{2}
\bigl(f(\ell) - f(h_{*,\alpha,m_0})\bigr)^2
+ H(\pi[\ell]|\pi[h_{*,\alpha,m_0}])
\eqqcolon \psi(\ell).
\end{multline*}
For simplicity of notation, we write
$h_* = h_{*,\alpha,m_0}$.
The function $\psi$ thus reads
\begin{IEEEeqnarray*}{rCl}
\psi(\ell)
&=& - \frac{\Jc}{2}
\bigl(f(\ell) - f(h_{*})\bigr)^2
+ J (\ell - h_*) f(\ell) \\
&&\adjustbin - \log \int_{\R} e^{J\ell x -V(x)} \dd x
+ \log \int_{\R} e^{Jh_* x -V(x)} \dd x.
\end{IEEEeqnarray*}
It satisfies $\psi(h_*) = 0$ and has derivative
\[
\psi'(\ell)
= \Bigl( - \Jc
\bigl( f(\ell) - f(h_*)\bigr)
+ J(\ell - h_*) \Bigr)f'(\ell).
\]
But we have
\[
\frac{\Jc}{J}\frac{f(\ell) - f(h_*)}{\ell - h_*}
\leqslant \frac{\Jc}{J} \sup f' \leqslant 1
\]
for all $\ell \neq h$.
Hence $\psi'(\ell) \geqslant 0$ for $\ell > h_*$
and $\psi'(\ell) \leqslant 0$ for $\ell < h_*$,
showing that $\psi(\ell) \geqslant 0$ for all $\ell \in \R$.
As a result, for all $\nu \in \mathcal P_1(\R)$,
\[
- \frac{\Jc}{2}
\bigl(p(\nu)-p(m_{*,\alpha,m_0})\bigr)^2 + H(\nu | m_{*,\alpha,m_0})
\geqslant 0.
\]
By rearranging the coefficients, we obtain
\begin{multline*}
- \frac{(1-\alpha)(\Jc+J)}{4}
\bigl(p(\nu)-p(m_{*,\alpha,m_0})\bigr)^2 + H(\nu | m_{*,\alpha,m_0}) \\
\geqslant
\frac 12 \biggl(1 - \frac{J}{\Jc}\biggr)H(\nu | m_{*,\alpha,m_0})
\geqslant
\biggl(1 - \frac{J}{\Jc}\biggr) \rho_0 W_2^2(\nu, m_{*,\alpha,m_0}),
\end{multline*}
the last inequality being a consequence of the $\rho_0$-log-Sobolev
for images of $\Pi$.
The non-linear Talagrand inequality \eqref{eq:mf-neg-t2} is now verified with
\[
\varepsilon = \frac 12
\min\biggl(\frac{\Jc}{J}-1,1\biggr),\qquad
\lambda = \biggl(1 - \frac{J}{\Jc}\biggr)\frac{\rho_0}{2}.
\]
We specialize to the case $\alpha = (k-1)/N$ and $m_0 = \mu_{\x^{[k-1]}}$.
Repeating the argument
in the second case of the proof of Proposition~\ref{prop:exm}
yields the transport and integrability constants $\gamma$, $M$
for Theorem~\ref{thm:sharp}.
\qed

\subsubsection*{Acknowledgements}
S.\,W. wishes to express his thanks to Benoît Dagallier
for discussions on his work~\cite{BBDCriterionFreeEnergy}
prior to its preprint posting
and to Pierre Monmarché for discussions on his recent work
\cite{MonmarcheReygnerLocal}.
The authors are also grateful to Daniel Lacker
for identifying an error in Remark~\ref{rem:curie-weiss-critical}
in an earlier version of this paper.

\subsubsection*{Funding}
Z.\,R's research is supported by
the Finance For Energy Market Research Centre
and by the France 2030 grant (ANR-21-EXES-0003).

\printbibliography

@online{nulsi,
	title={Uniform log-{Sobolev} inequalities for mean field particles with flat-convex energy},
	author={Songbo Wang},
	year={2024},
	eprint={2408.03283},
	archivePrefix={arXiv},
	primaryClass={math.PR},
}

@article{slpoc,
	title={Sharp local propagation of chaos for mean field particles with $W^{-1,\infty}$ kernels},
	author={Songbo Wang},
	year={2025},
	journal={J.\ Funct.\ Anal.},
	eprint={2403.13161},
	archivePrefix={arXiv},
	primaryClass={math.PR},
	pubstate={forthcoming}
}

@article{ulpoc,
	title={Uniform-in-time propagation of chaos for mean field {Langevin} dynamics},
	author={Fan Chen and Zhenjie Ren and Songbo Wang},
	year={2025},
	journal={Ann.\ Inst.\ Henri Poincaré, Probab.\ Stat.},
	eprint={2212.03050},
	archivePrefix={arXiv},
	primaryClass={math.PR},
	pubstate={forthcoming}
}

@Article{JabinWang,
 Author = {Jabin, Pierre-Emmanuel and Wang, Zhenfu},
 Title = {Quantitative estimates of propagation of chaos for stochastic systems
          with ${W}^{-1,\infty}$ kernels},
 FJournal = {Inventiones Mathematicae},
 Journal = {Invent. Math.},
 ISSN = {0020-9910},
 Volume = {214},
 Number = {1},
 Pages = {523--591},
 Year = {2018},
 Language = {English},
 DOI = {10.1007/s00222-018-0808-y},
 Keywords = {35Q30,60F17,60H10,76R99,35R60},
 zbMATH = {6955480},
 Zbl = {1402.35208}
}

@Article{LackerHier,
 Author = {Lacker, Daniel},
 Title = {Hierarchies, entropy, and quantitative propagation of chaos
          for mean field diffusions},
 FJournal = {Probability and Mathematical Physics},
 Journal = {Probab. Math. Phys.},
 ISSN = {2690-0998},
 Volume = {4},
 Number = {2},
 Pages = {377--432},
 Year = {2023},
 Language = {English},
 DOI = {10.2140/pmp.2023.4.377},
 Keywords = {82C22,60F17,60H10},
 zbMATH = {7709553},
 Zbl = {1515.82109}
}

@article{BJWAttractive,
 author = {Bresch, Didier and Jabin, Pierre-Emmanuel and Wang, Zhenfu},
 title = {Mean field limit and quantitative estimates with singular attractive kernels},
 fjournal = {Duke Mathematical Journal},
 journal = {Duke Math. J.},
 issn = {0012-7094},
 volume = {172},
 number = {13},
 pages = {2591--2641},
 year = {2023},
 language = {English},
 doi = {10.1215/00127094-2022-0088},
 keywords = {35Q92,92C17,35B41,35B44,60J65,60E10},
 zbMATH = {7783725},
 Zbl = {1530.35321}
}

@article{HCRHigher,
 author = {Hess-Childs, Elias and Rowan, Keefer},
 title = {Higher-order propagation of chaos in $L^2$ for interacting diffusions},
 fjournal = {Probability and Mathematical Physics},
 journal = {Probab. Math. Phys.},
 issn = {2690-0998},
 volume = {6},
 number = {2},
 pages = {581--646},
 year = {2025},
 language = {English},
 doi = {10.2140/pmp.2025.6.581},
 keywords = {35Q83,82C31,82C22,82C40,81Q50,60J65},
 zbMATH = {8022006}
}

@Article{LLFSharp,
 Author = {Lacker, Daniel and Le Flem, Luc},
 Title = {Sharp uniform-in-time propagation of chaos},
 FJournal = {Probability Theory and Related Fields},
 Journal = {Probab. Theory Relat. Fields},
 ISSN = {0178-8051},
 Volume = {187},
 Number = {1-2},
 Pages = {443--480},
 Year = {2023},
 Language = {English},
 DOI = {10.1007/s00440-023-01192-x},
 Keywords = {82C22,60H10},
 zbMATH = {7735855}
}

@online{HanEnt,
      title={Entropic propagation of chaos for mean field diffusion with ${L}^p$ interactions via hierarchy, linear growth and fractional noise},
      author={Yi Han},
      year={2023},
      eprint={2205.02772},
      archivePrefix={arXiv},
      primaryClass={math.PR}
}

@article{BJSNewApproach,
 author = {Bresch, Didier and Jabin, Pierre-Emmanuel and Soler, Juan},
 title = {A new approach to the mean-field limit of {Vlasov}--{Fokker}--{Planck} equations},
 fjournal = {Analysis \& PDE},
 journal = {Anal. PDE},
 issn = {2157-5045},
 volume = {18},
 number = {4},
 pages = {1037--1064},
 year = {2025},
 language = {English},
 doi = {10.2140/apde.2025.18.1037},
 keywords = {35Q83,35Q84,35Q40,60H10,70F45,82D10,35R60},
 zbMATH = {8023891}
}

@article{BDJDuality,
      title={A duality method for mean-field limits with singular interactions},
      author={Didier Bresch and Mitia Duerinckx and Pierre-Emmanuel Jabin},
      year={2024},
      eprint={2402.04695},
      archivePrefix={arXiv},
      primaryClass={math.AP}
}

@Book{BGLMarkov,
 Author = {Bakry, Dominique and Gentil, Ivan and Ledoux, Michel},
 Title = {Analysis and geometry of {Markov} diffusion operators},
 FSeries = {Grundlehren der Mathematischen Wissenschaften},
 Series = {Grundlehren Math. Wiss.},
 ISSN = {0072-7830},
 Volume = {348},
 ISBN = {978-3-319-00226-2; 978-3-319-00227-9},
 Year = {2014},
 Publisher = {Springer},
 Address = {Cham},
 Language = {English},
 DOI = {10.1007/978-3-319-00227-9},
 Keywords = {60-02,60J60,58J65,60H10,47A07,47F05},
 zbMATH = {6175511},
 Zbl = {1376.60002}
}

@Article{HolleyStroockLSI,
 Author = {Holley, Richard and Stroock, Daniel},
 Title = {Logarithmic {Sobolev} inequalities and stochastic {Ising} models},
 FJournal = {Journal of Statistical Physics},
 Journal = {J. Stat. Phys.},
 ISSN = {0022-4715},
 Volume = {46},
 Number = {5-6},
 Pages = {1159--1194},
 Year = {1987},
 Language = {English},
 DOI = {10.1007/BF01011161},
 Keywords = {60K35,82B05},
 zbMATH = {4117639},
 Zbl = {0682.60109}
}

@Article{PPSChaos,
 Author = {Paul, Thierry and Pulvirenti, Mario and Simonella, Sergio},
 Title = {On the size of chaos in the mean field dynamics},
 FJournal = {Archive for Rational Mechanics and Analysis},
 Journal = {Arch. Ration. Mech. Anal.},
 ISSN = {0003-9527},
 Volume = {231},
 Number = {1},
 Pages = {285--317},
 Year = {2019},
 Language = {English},
 DOI = {10.1007/s00205-018-1280-y},
 Keywords = {81Q50,81V70,81Q10,81T27,82B31},
 zbMATH = {6997176},
 Zbl = {1404.81117}
}

@Article{DuerinckxChaos,
 Author = {Duerinckx, Mitia},
 Title = {On the size of chaos via {Glauber} calculus in the classical mean-field dynamics},
 FJournal = {Communications in Mathematical Physics},
 Journal = {Commun. Math. Phys.},
 ISSN = {0010-3616},
 Volume = {382},
 Number = {1},
 Pages = {613--653},
 Year = {2021},
 Language = {English},
 DOI = {10.1007/s00220-021-03978-3},
 Keywords = {35Q20,35Q83,82C22,82C40,82D10,60F05},
 zbMATH = {7331852},
 Zbl = {1466.35279}
}

@article{AidaShigekawaLSI,
 Author = {Aida, Shigeki and Shigekawa, Ichiro},
 Title = {Logarithmic {Sobolev} inequalities and spectral gaps: {Perturbation} theory},
 FJournal = {Journal of Functional Analysis},
 Journal = {J. Funct. Anal.},
 ISSN = {0022-1236},
 Volume = {126},
 Number = {2},
 Pages = {448--475},
 Year = {1994},
 Language = {English},
 DOI = {10.1006/jfan.1994.1154},
 Keywords = {46E35,60H10},
 zbMATH = {763977},
 Zbl = {0846.46019}
}

@Article{MalrieuLSI,
 Author = {Malrieu, Florent},
 Title = {Logarithmic {Sobolev} inequalities for some nonlinear {PDE}'s.},
 FJournal = {Stochastic Processes and their Applications},
 Journal = {Stochastic Processes Appl.},
 ISSN = {0304-4149},
 Volume = {95},
 Number = {1},
 Pages = {109--132},
 Year = {2001},
 Language = {English},
 DOI = {10.1016/S0304-4149(01)00095-3},
 Keywords = {60J60,60K35},
 zbMATH = {2138891},
 Zbl = {1059.60084}
}

@Article{BolleyVillaniCKP,
 Author = {Bolley, Fran{\c{c}}ois and Villani, C{\'e}dric},
 Title = {Weighted {Csisz{\'a}r}--{Kullback}--{Pinsker} inequalities and applications to transportation inequalities},
 FJournal = {Annales de la Facult{\'e} des Sciences de Toulouse. Math{\'e}matiques. S{\'e}rie VI},
 Journal = {Ann. Fac. Sci. Toulouse, Math. (6)},
 ISSN = {0240-2963},
 Volume = {14},
 Number = {3},
 Pages = {331--352},
 Year = {2005},
 Language = {English},
 DOI = {10.5802/afst.1095},
 Keywords = {60B05,60A10,28A33,37H10},
 zbMATH = {5010149},
 Zbl = {1087.60008}
}

@article{RosenzweigSerfatyMLSI,
 author = {Rosenzweig, Matthew and Serfaty, Sylvia},
 title = {Modulated logarithmic {Sobolev} inequalities and generation of chaos},
 fjournal = {Annales de la Facult{\'e} des Sciences de Toulouse. Math{\'e}matiques. S{\'e}rie VI},
 journal = {Ann. Fac. Sci. Toulouse, Math. (6)},
 issn = {0240-2963},
 volume = {34},
 number = {1},
 pages = {107--134},
 year = {2025},
 language = {English},
 doi = {10.5802/afst.1807},
 keywords = {35Qxx,82Cxx,82Bxx},
 zbMATH = {8056140}
}

@Article{HRSS,
 Author = {Hu, Kaitong and Ren, Zhenjie and {\v{S}}i{\v{s}}ka, David and Szpruch, {\L}ukasz},
 Title = {Mean-field {Langevin} dynamics and energy landscape of neural networks},
 FJournal = {Annales de l'Institut Henri Poincar{\'e}. Probabilit{\'e}s et Statistiques},
 Journal = {Ann. Inst. Henri Poincar{\'e}, Probab. Stat.},
 ISSN = {0246-0203},
 Volume = {57},
 Number = {4},
 Pages = {2043--2065},
 Year = {2021},
 Language = {English},
 DOI = {10.1214/20-AIHP1140},
 Keywords = {65C30,60H30,35Q89},
 zbMATH = {7481278},
 Zbl = {1492.65023}
}

@Book{AGSGradientFlows,
 Author = {Ambrosio, Luigi and Gigli, Nicola and Savar{\'e}, Giuseppe},
 Title = {Gradient flows in metric spaces and in the space of probability measures},
 Edition = {2},
 ISBN = {978-3-7643-8721-1},
 Year = {2008},
 Publisher = {Birkh{\"a}user},
 Address = {Basel},
 Language = {English},
 Keywords = {35-02,49-02,49J40,28A33,35K55,47H05,65M15},
 zbMATH = {5233008},
 Zbl = {1145.35001}
}

@article{ChaintronDiezPOC1,
 Author = {Chaintron, Louis-Pierre and Diez, Antoine},
 Title = {Propagation of chaos: a review of models, methods and applications. {I}: {Models} and methods},
 FJournal = {Kinetic and Related Models},
 Journal = {Kinet. Relat. Models},
 ISSN = {1937-5093},
 Volume = {15},
 Number = {6},
 Pages = {895--1015},
 Year = {2022},
 Language = {English},
 DOI = {10.3934/krm.2022017},
 Keywords = {82C22,82C40,35Q70,65C35,92-10},
 zbMATH = {7596816},
 Zbl = {1496.82016}
}

@article{DelarueTseUniformPOC,
    AUTHOR = {Delarue, Fran\c cois and Tse, Alvin},
     TITLE = {Uniform in time weak propagation of chaos on the torus},
   JOURNAL = {Ann. Inst. Henri Poincar\'e{} Probab. Stat.},
  FJOURNAL = {Annales de l'Institut Henri Poincar\'e{} Probabilit\'es et
              Statistiques},
    VOLUME = {61},
      YEAR = {2025},
    NUMBER = {2},
     PAGES = {1021--1074},
      ISSN = {0246-0203,1778-7017},
   MRCLASS = {60K35 (35Q84 60F99 82C31)},
  MRNUMBER = {4901633},
       DOI = {10.1214/23-aihp1451},
}

@article{ChizatMFL,
title={Mean-Field {Langevin} Dynamics: {Exponential} Convergence and Annealing},
author={L{\'e}na{\"i}c Chizat},
journal={Trans. Mach. Learn. Res.},
issn={2835-8856},
year={2022},
url={https://openreview.net/forum?id=BDqzLH1gEm},
}

@Article{CMVKinetic,
 Author = {Carrillo, Jos{\'e} A. and McCann, Robert J. and Villani, C{\'e}dric},
 Title = {Kinetic equilibration rates for granular media and related equations: entropy dissipation and mass transportation estimates},
 FJournal = {Revista Matem{\'a}tica Iberoamericana},
 Journal = {Rev. Mat. Iberoam.},
 ISSN = {0213-2230},
 Volume = {19},
 Number = {3},
 Pages = {971--1018},
 Year = {2003},
 Language = {English},
 DOI = {10.4171/RMI/376},
 Keywords = {35K65,35K55,35B40},
 zbMATH = {2109873},
 Zbl = {1073.35127}
}

@InProceedings{NWSConvexMFL,
  title = 	 { Convex Analysis of the Mean Field {Langevin} Dynamics },
  author =       {Nitanda, Atsushi and Wu, Denny and Suzuki, Taiji},
  booktitle = 	 {Proceedings of the 25th International Conference on Artificial Intelligence and Statistics},
  pages = 	 {9741--9757},
  volume = 	 {151},
  series = 	 {Proceedings of Machine Learning Research},
  year = {2022},
  publisher =    {PMLR},
  pdf = 	 {https://proceedings.mlr.press/v151/nitanda22a/nitanda22a.pdf},
  url = 	 {https://proceedings.mlr.press/v151/nitanda22a.html},
  abstract = 	 { As an example of the nonlinear Fokker-Planck equation, the mean field Langevin dynamics recently attracts attention due to its connection to (noisy) gradient descent on infinitely wide neural networks in the mean field regime, and hence the convergence property of the dynamics is of great theoretical interest. In this work, we give a concise and self-contained convergence rate analysis of the mean field Langevin dynamics with respect to the (regularized) objective function in both continuous and discrete time settings. The key ingredient of our proof is a proximal Gibbs distribution $p_q$ associated with the dynamics, which, in combination with techniques in Vempala and Wibisono (2019), allows us to develop a simple convergence theory parallel to classical results in convex optimization. Furthermore, we reveal that $p_q$ connects to the duality gap in the empirical risk minimization setting, which enables efficient empirical evaluation of the algorithm convergence. }
}

@Book{VillaniOptimalTransport,
 Author = {Villani, C{\'e}dric},
 Title = {Optimal transport. {Old} and new},
 FSeries = {Grundlehren der Mathematischen Wissenschaften},
 Series = {Grundlehren Math. Wiss.},
 ISSN = {0072-7830},
 Volume = {338},
 ISBN = {978-3-540-71049-3},
 Year = {2009},
 Publisher = {Springer-Verlag},
 Address = {Berlin etc.},
 Language = {English},
 DOI = {10.1007/978-3-540-71050-9},
 Keywords = {53-02,49-02,90-02,93-02},
 zbMATH = {5306371},
 Zbl = {1156.53003}
}

@incollection{BakryEmeryHypercontractives,
 Author = {Bakry, Dominique and {\'E}mery, Michel},
 Title = {Diffusions hypercontractives},
 Year = {1985},
 Language = {French},
 booktitle = {S{\'e}min. de Probab. {XIX}},
 volume = {1123},
 series = {{Lect}. {Notes} {Math}.},
 publisher = {Springer-Verlag},
 address = {Berlin etc.}
}

@online{KZCELSampling,
      title={Sampling from the Mean-Field Stationary Distribution},
      author={Yunbum Kook and Matthew S. Zhang and Sinho Chewi and Murat A. Erdogdu and Mufan Bill Li},
      year={2024},
      eprint={2402.07355},
      archivePrefix={arXiv},
      primaryClass={math.ST},
}

@Article{BobkovGoetzeExponential,
 Author = {Bobkov, Sergey G. and G{\"o}tze, Friedrich},
 Title = {Exponential integrability and transportation cost related to logarithmic {Sobolev} inequalities},
 FJournal = {Journal of Functional Analysis},
 Journal = {J. Funct. Anal.},
 ISSN = {0022-1236},
 Volume = {163},
 Number = {1},
 Pages = {1--28},
 Year = {1999},
 Language = {English},
 DOI = {10.1006/jfan.1998.3326},
 Keywords = {46E35},
 zbMATH = {1278792},
 Zbl = {0924.46027}
}

@Article{MicloHyperboundedness,
 Author = {Miclo, Laurent},
 Title = {On hyperboundedness and spectrum of {Markov} operators},
 FJournal = {Inventiones Mathematicae},
 Journal = {Invent. Math.},
 ISSN = {0020-9910},
 Volume = {200},
 Number = {1},
 Pages = {311--343},
 Year = {2015},
 Language = {English},
 DOI = {10.1007/s00222-014-0538-8},
 Keywords = {47A35,37A30,46E30,60J25,58J50},
 zbMATH = {6434689},
 Zbl = {1312.47016}
}

@Article{LackerQuantitative,
 Author = {Lacker, Daniel},
 Title = {Quantitative approximate independence for continuous mean field {Gibbs} measures},
 FJournal = {Electronic Journal of Probability},
 Journal = {Electron. J. Probab.},
 ISSN = {1083-6489},
 Volume = {27},
 eid = {15},
 Year = {2022},
 Language = {English},
 DOI = {10.1214/22-EJP743},
 Keywords = {82B21,60F05},
 zbMATH = {7478692},
 Zbl = {1484.82015}
}

@Article{DGWTransportation,
 Author = {Djellout, Hac{\`e}ne and Guillin, Arnaud and Wu, Liming},
 Title = {Transportation cost-information inequalities and applications to random dynamical systems and diffusions.},
 FJournal = {The Annals of Probability},
 Journal = {Ann. Probab.},
 ISSN = {0091-1798},
 Volume = {32},
 Number = {3B},
 Pages = {2702--2732},
 Year = {2004},
 Language = {English},
 DOI = {10.1214/009117904000000531},
 Keywords = {60E15,28A35,28C20,60G15,60H10,60J60},
 zbMATH = {2121711},
 Zbl = {1061.60011}
}

@article{MonmarcheReygnerLocal,
      title={Local convergence rates for {Wasserstein} gradient flows and {McKean}--{Vlasov} equations with multiple stationary solutions},
      author={Pierre Monmarché and Julien Reygner},
      journal={Probab. Theory Relat. Fields},
      year={2025},
      doi = {10.1007/s00440-025-01399-0},
      pubstate={prepublished}
}

@book {GlimmJaffeQuantum,
    AUTHOR = {Glimm, James and Jaffe, Arthur},
     TITLE = {Quantum physics},
   EDITION = {2},
  SUBTITLE = {A functional integral point of view},
 PUBLISHER = {Springer-Verlag},
   ADDRESS = {New York},
      YEAR = {1987},
     PAGES = {xxii+535},
      ISBN = {0-387-96476-2},
   MRCLASS = {81-02 (81E05)},
  MRNUMBER = {887102},
MRREVIEWER = {P.\ D. F. Ion},
       DOI = {10.1007/978-1-4612-4728-9},
}

@online{BBDCriterionFreeEnergy,
      title={A criterion on the free energy for log-{Sobolev} inequalities in mean-field particle systems},
      author={Roland Bauerschmidt and Thierry Bodineau and Beno{\^i}t Dagallier},
      year={2025},
      eprint={2503.24372},
      archivePrefix={arXiv},
      primaryClass={math.PR},
}

@article{GGPSharp,
    AUTHOR = {Grass, Jules and Guillin, Arnaud and Poquet, Christophe},
     TITLE = {Sharp propagation of chaos for {M}c{K}ean--{V}lasov equation
              with non constant diffusion coefficient},
   JOURNAL = {Electron. Commun. Probab.},
  FJOURNAL = {Electronic Communications in Probability},
    VOLUME = {30},
      YEAR = {2025},
       eid = {55},
      ISSN = {1083-589X},
   MRCLASS = {60H10 (35K55 60E15 60K35 82C22)},
  MRNUMBER = {4925131},
       DOI = {10.1214/25-ecp701}
}

@online{LYZNonExchangeable,
      title={Quantitative propagation of chaos for non-exchangeable diffusions via first-passage percolation},
      author={Daniel Lacker and Lane Chun Yeung and Fuzhong Zhou},
      year={2024},
      eprint={2409.08882},
      archivePrefix={arXiv},
      primaryClass={math.PR},
}

@online{MonmarcheULSIBeyond,
      title={Uniform log-{Sobolev} inequalities for mean field particles beyond flat-convexity},
      author={Pierre Monmarché},
      year={2024},
      eprint={2409.17901},
      archivePrefix={arXiv},
      primaryClass={math.AP},
}

@inproceedings {KacFoundations,
    AUTHOR = {Kac, Mark},
     TITLE = {Foundations of kinetic theory},
 BOOKTITLE = {Proceedings of the {T}hird {B}erkeley {S}ymposium on
              {M}athematical {S}tatistics and {P}robability},
 EVENTDATE = {1954/1955},
    VOLUME = {3},
     VENUE = {Berkeley},
     PAGES = {171--197},
 PUBLISHER = {University of California Press},
   ADDRESS = {Berkeley and Los Angeles, CA},
      YEAR = {1956},
   MRCLASS = {82.1X},
  MRNUMBER = {84985},
MRREVIEWER = {E.\ Frieman},
}

@Article{FHMVortex,
 Author = {Fournier, Nicolas and Hauray, Maxime and Mischler, St{\'e}phane},
 Title = {Propagation of chaos for the 2{D} viscous vortex model},
 FJournal = {Journal of the European Mathematical Society},
 Journal = {J. Eur. Math. Soc.},
 ISSN = {1435-9855},
 Volume = {16},
 Number = {7},
 Pages = {1423--1466},
 Year = {2014},
 Language = {English},
 DOI = {10.4171/JEMS/465},
 Keywords = {76D05,65C35,76M25,35Q30,82C80},
 zbMATH = {6353606},
 Zbl = {1299.76040}
}

@Article{TalagrandTransportation,
 Author = {Talagrand, Michel},
 Title = {Transportation cost for {Gaussian} and other product measures},
 FJournal = {Geometric and Functional Analysis. GAFA},
 Journal = {Geom. Funct. Anal.},
 ISSN = {1016-443X},
 Volume = {6},
 Number = {3},
 Pages = {587--600},
 Year = {1996},
 Language = {English},
 DOI = {10.1007/BF02249265},
 Keywords = {46G12,60B11,28C20},
 zbMATH = {912465},
 Zbl = {0859.46030}
}

@Article{MartonConcentration,
 Author = {Marton, Katalin},
 Title = {Bounding $\bar d$-distance by informational divergence: {A} method to prove measure concentration},
 FJournal = {The Annals of Probability},
 Journal = {Ann. Probab.},
 ISSN = {0091-1798},
 Volume = {24},
 Number = {2},
 Pages = {857--866},
 Year = {1996},
 Language = {English},
 DOI = {10.1214/aop/1039639365},
 Keywords = {60F10,60G70,60G05},
 zbMATH = {961005},
 Zbl = {0865.60017}
}

@inproceedings{NitandaImproved,
 author = {Nitanda, Atsushi},
 booktitle = {Advances in Neural Information Processing Systems},
 pages = {113823--113845},
 publisher = {Curran Associates, Inc.},
 title = {Improved Particle Approximation Error for Mean Field Neural Networks},
 url = {https://proceedings.neurips.cc/paper_files/paper/2024/file/ce5f18eba5e5e9c7c166062300c677c3-Paper-Conference.pdf},
 volume = {37},
 year = {2024}
}

@Article{RougerieSerfatyHigher,
 Author = {Rougerie, Nicolas and Serfaty, Sylvia},
 Title = {Higher-dimensional {Coulomb} gases and renormalized energy functionals},
 FJournal = {Communications on Pure and Applied Mathematics},
 Journal = {Commun. Pure Appl. Math.},
 ISSN = {0010-3640},
 Volume = {69},
 Number = {3},
 Pages = {519--605},
 Year = {2016},
 Language = {English},
 DOI = {10.1002/cpa.21570},
 Keywords = {82C40},
 zbMATH = {6551872},
 Zbl = {1338.82043}
}

@online{CNZULSI,
      title={Uniform-in-$N$ log-{S}obolev inequality for the mean-field {L}angevin dynamics with convex energy},
      author={Sinho Chewi and Atsushi Nitanda and Matthew S. Zhang},
      year={2024},
      eprint={2409.10440},
      archivePrefix={arXiv},
      primaryClass={math.PR},
}

@online{SerfatyLecturesCoulombRiesz,
      title={Lectures on {C}oulomb and {R}iesz gases},
      author={Sylvia Serfaty},
      year={2024},
      eprint={2407.21194},
      archivePrefix={arXiv},
      primaryClass={math-ph},
}

@Article{SznitmanBoltzmannHomogenes,
 Author = {Sznitman, Alain-Sol},
 Title = {{\'E}quations de type de {Boltzmann}, spatialement homog{\`e}nes},
 FJournal = {Zeitschrift f{\"u}r Wahrscheinlichkeitstheorie und Verwandte Gebiete},
 Journal = {Z. Wahrscheinlichkeitstheor. Verw. Geb.},
 ISSN = {0044-3719},
 Volume = {66},
 Pages = {559--592},
 Year = {1984},
 Language = {French},
 DOI = {10.1007/BF00531891},
 Keywords = {60J25,60K35},
 zbMATH = {3881618},
 Zbl = {0553.60069}
}

@article{JPSMFLNonExchangeable,
 author = {Jabin, Pierre-Emmanuel and Poyato, David and Soler, Juan},
 title = {Mean-field limit of non-exchangeable systems},
 fjournal = {Communications on Pure and Applied Mathematics},
 journal = {Commun. Pure Appl. Math.},
 issn = {0010-3640},
 volume = {78},
 number = {4},
 pages = {651--741},
 year = {2025},
 language = {English},
 doi = {10.1002/cpa.22235},
 keywords = {93A16,93C20,05C90,60H30},
 zbMATH = {7988477}
}

@online{MonmarcheSchuhNonAsymptotic,
      title={Non-asymptotic entropic bounds for non-linear kinetic {Langevin} sampler with second-order splitting scheme},
      author={Pierre Monmarché and Katharina Schuh},
      year={2024},
      eprint={2412.03560},
      archivePrefix={arXiv},
      primaryClass={math.PR},
}

@article{BauerschmidtBodineauSimple,
 author = {Bauerschmidt, Roland and Bodineau, Thierry},
 title = {A very simple proof of the {LSI} for high temperature spin systems},
 fjournal = {Journal of Functional Analysis},
 journal = {J. Funct. Anal.},
 issn = {0022-1236},
 volume = {276},
 number = {8},
 pages = {2582--2588},
 year = {2019},
 language = {English},
 doi = {10.1016/j.jfa.2019.01.007},
 keywords = {81V70,81R25,82D30,82D40,47A50},
 zbMATH = {7030906},
 Zbl = {1408.81033}
}
\end{document}